\def\titlerunning#1{\gdef\titrun{#1}}
\def\author#1{\gdef\autrun{\def\and{\unskip, }#1}\gdef\@author{#1}}
\def\address#1{{\def\and{\\\hspace*{18pt}}\renewcommand{\thefootnote}{}%
\footnote {#1}}%
\markboth{\autrun}{\titrun}}
\def\email#1{e-mail: #1}
\def\subjclass#1{{\renewcommand{\thefootnote}{}%
\footnote{\emph{Mathematics Subject Classification (2010):} #1}}}
\def\keywords#1{\par\medskip
\noindent\textbf{Keywords.} #1}
\newcommand{\Hmm}[1]{\leavevmode{\marginpar{\tiny%
$\hbox to 0mm{\hspace*{-0.5mm}$\leftarrow$\hss}%
\vcenter{\vrule depth 0.1mm height 0.1mm width \the\marginparwidth}%
\hbox to
0mm{\hss$\rightarrow$\hspace*{-0.5mm}}$\\\relax\raggedright #1}}}
\newtheorem{thm}{Theorem}[section]
\newtheorem{cor}[thm]{Corollary}
\newtheorem{lem}[thm]{Lemma}
\newtheorem{pro}[thm]{Proposition}
\theoremstyle{definition}
\newtheorem{defin}[thm]{Definition}
\newtheorem{rem}[thm]{Remark}
\numberwithin{equation}{section}
\newcommand{\R}{{\mathbb R}}
\newcommand{\N}{{\mathbb N}}
\let\H\undefined
\let\L\undefined
\newcommand{\H}{H}
\newcommand{\L}{L}
\newcommand{\F}{F}
\newcommand{\ph}{{\varphi}}
\newcommand{\eps}{{\varepsilon}}
\newcommand{\si}{{\sigma}}
\newcommand{\lm}{{\lambda}}
\newcommand{\ov}[1]{\overline{ #1}}
\newcommand{\ow}[1]{\widetilde{ #1}}
\newcommand{\dt}{\,\mathrm{d}t}
\def\ga{\alpha}     \def\gb{\beta}
\def\gf{\phi}       \def\vgf{\varphi}    
            \def\gl{\lambda}
\begin{document}

\baselineskip=17pt

\titlerunning{Criticality theory   on graphs}
\title{Criticality theory  for Schr\"odinger operators\\ on graphs}
\author{Matthias Keller
\and
Yehuda Pinchover
\and
Felix Pogorzelski}

\date{}

\maketitle

\address{M.~Keller:  Institut f\"ur Mathematik, Universit\"at Potsdam, 14476  Potsdam, Germany;
\email{mkeller@math.uni-potsdam.de}
\and
Y.~Pinchover and F.~Pogorzelski: Department of Mathematics, Technion-Israel Institute of Technology, 32000 Haifa, Israel;
\email{pincho@technion.ac.il and felixp@technion.ac.il}}
%
%
\subjclass{Primary 39A12; Secondary 31C20, 31C25, 35B09, 35P05, 35R02, 47B39}

\begin{abstract}
	We study Schr\"odinger operators given by positive quadratic forms on infinite graphs.
From there, we develop a criticality theory
for Schr\"odinger operators on general weighted graphs.

\keywords{Green function, Ground state,  Positive solutions, Discrete Schr\"odinger operators, Weighted graphs}
\end{abstract}
\section{Introduction}

Schr\"odinger operators are an important class of operators in analysis and mathematical physics.  One of the first and most fundamental questions in the analysis of positive Schr\"odinger operators is the question of criticality. In fact, it is the starting point to study Liouville theorems, the large time behaviour of the heat kernel, Hardy inequalities, properties of the ground state as well as basic questions in spectral theory.

Such a theory is classical for second-order linear elliptic operators (not necessarily symmetric) with real coefficients for which we refer the
reader to \cite{M86,P07,Pins95} and references therein. See also \cite{PP} (and references therein)
for recent developments in the quasilinear case,  and \cite{Tak14,Tak16}
for the case of generalized Schr\"odinger forms.

In this paper we present a criticality theory
for positive Schr\"odinger operators on general weighted graphs. First applications of this theory were already obtained in \cite{KePiPo2} and \cite{BP}.

While criticality theory was so far not studied for graphs, there is a closely related phenomena called recurrence. This notion appears in the setting of random walks, the situation of positive matrices and in the very general context of Dirichlet forms. In each of these settings there exists a vast body of literature so we mention here only  the monograph \cite{Woe-Book} for random walks, the chapter in the survey   \cite[Section~6]{MoharWoess89} for positive matrices and \cite{Fuk} for the theory of Dirichlet forms.
We elaborate in  depth about the relationship of recurrence and criticality in Remark~\ref{r:4.5} in Section~\ref{s:char_crit}.

The paper is structured as follows. In the following section the basic
setting is introduced. In Section~\ref{s:SA_operators} we discuss self-adjoint realizations of the formal Laplacian.
This is used to prove an Allegretto-Piepenbrink
theorem in Section~\ref{s:AP} which utilizes a local Harnack inequality and a ground state transform. We proceed in Section~\ref{s:char_crit} by a characterization of criticality and subcriticality. This includes in Section~\ref{s:GF} a particular discussion of the Green function. We continue to characterize a phenomenon called uniform subcriticality in Section~\ref{s:USC}. Finally, we characterize a phenomenon called {\em null-criticality} in Section~\ref{s:NC}. In particular, we characterize it in terms of the large time behavior of the heat kernel and in terms of the behavior of the Green function near criticality.
\section{Set up}
\subsection{Graphs}
Let $X$ be an infinite set equipped with the discrete
topology. A graph over $X$ is a symmetric function $b:X\times
X\to[0,\infty)$ with zero diagonal such that it is \emph{locally summable}, that is,
$$\sum_{y\in  X}b(x, y)<\infty \quad \textup{ for all } x\in X.$$
We call the elements of $X$  \emph{vertices}. We say that $x,y\in X$ are
\emph{adjacent} or \emph{neighbors} or \emph{connected by an edge}
if $b(x,y)>0$ in which case we write $x\sim y$.
We call  $ b $ \emph{connected} if for every $x$ and $y$ in $X$
there are $x_{0},\ldots,x_{n}$ in $X$ such that $x_{0}=x$, $x_{n}=y$
and $x_{i}\sim x_{i+1}$ for $i=0,\ldots,n-1$.
\subsection{Formal Schr\"odinger operators and forms}\label{Subsection-Formal}
Let $W\subseteq X$, we denote by $C(W)$ (resp., $C_c(W)$) the space
of real valued functions on $W$ (resp., with compact support in
$W$). By extending functions by zero on $X\setminus W$ the space
$C(W)$  will be considered as a subspace of  $C(X)$.

We write $f\ge c$ (resp., $f=c$) in $W$, whenever a
function $f\in C(W)$ is  larger or equal (resp., equal)
to the function which takes the constant value $c\in\R$ on $W$. In
particular, with a slight abuse of notation, we do not distinguish
between constants and constant functions in notation and may write
for example $1$ for the function that takes constantly the value 1
on $X$.  We
say that $f\in C(W)$ is \emph{positive} in $W$ if  $f\geq 0$ and $f\neq 0$ in
$W$, in this case, we also use the notation $f\gneqq 0$.
We use the notation $f=f_{+}-f_{-}$, where $f_{\pm}:=(0\vee \pm\,
f)$ are the positive and the negative parts of $f$.


Given a  graph $b$ over $X$, we introduce the associated \emph{formal
Laplacian} ${\L}={\L}_{b}$ acting on the space
\begin{align*}
{\F}(X):=\{f\in C(X) \mid \sum_{y\in X}b(x,y)|f(y)|<\infty\mbox{ for
all } x\in X\},
\end{align*}
by
\begin{align*}
{\L} f(x):=\sum_{y\in X}b(x,y)(f(x)-f(y)).
\end{align*}
By the summability assumption on $b$ we have $\ell^\infty(X) \subset
F(X)$.


For a potential $q:X\to\mathbb{R}$, we define the \emph{formal
Schr\"odinger operator} ${\H}$ on $\F(X)$ by
\begin{align*}
{\H}:={\L}+q.
\end{align*}
The associated \emph{bilinear form} $h$ of $H$ on $C_{c}(X)\times C_{c}(X)$ is given by
\begin{align*}
h(\ph,\psi)\!:=\!\frac{1}{2}\sum_{x,y\in
X}\!\!\!b(x,y)(\ph(x)\!-\!\ph(y))(\psi(x)\!-\!\psi(y)) \!+\!\sum_{x\in
X}\!q(x)\ph(x)\psi(x).
\end{align*}
We denote by $h(\ph):=h(\ph,\ph)$ the induced quadratic form on $C_c(X)$. Furthermore, we write $h\ge0\mbox{ on }C_{c}(X)$ (or in short $h\geq 0$)
if $h(\ph)\ge0$ for all $\ph\in C_{c}(X)$.

\medskip

A strictly positive function $m:X\to(0,\infty)$ extends to a measure
via $m(A):=\sum_{x\in A}m(x)$, where $A\subseteq X$. The real Hilbert space $\ell^{2}(X,m)$ is the space of all
$m$-square summable functions, equipped with the  scalar product
\begin{align*}
\langle{f},g\rangle_{m}:=\sum_{X}f g m:=\sum_{x\in X}f(x)g(x)m(x)
\qquad f,g \in \ell^{2}(X,m),
\end{align*}
 and the induced norm $\|\cdot\|=\|\cdot\|_{m}$.  Given a measure $m$ we speak of a graph $b$ over $(X,m)$.

With a slight abuse of notation, we keep writing $\ell^{2}(X,m)$ also in the case where $m\gneqq 0$.
 Of a particular importance is
the case when $m=1$ is the counting measure. In this case, we denote by
$\ell^{2}(X)$ the Hilbert space of square summable functions equipped with the
scalar product
\begin{align*}
\langle{f},g\rangle:=\langle{f},g\rangle_1=\sum_{X}f g=\sum_{x\in X}f(x)g(x) \qquad f,g \in \ell^{2}(X).
\end{align*}

There is a Green formula relating $H$ and $h$. The formula follows
by a direct algebraic manipulation, where one has to ensure that all
the involved sums converge absolutely. This is however a consequence of
the Cauchy-Schwarz inequality and Fubini's theorem. For details see
\cite[Lemma 4.7]{HK}.

\begin{lem}[Green formula]\label{l:Green}
	For all $f\in {\F}(X)$ and $\psi\in C_{c}(X)$ one has
	\begin{align*}
	\frac{1}{2}\sum_{x,y\in X}b(x,y)(f(x)-f(y))(\psi(x)-\psi(y))
	+\sum_{x\in
		X}q(x)f(x)\psi(x)\\
	=\sum_{x\in X}({\H}f)(x)\psi(x)=\sum_{x\in X}f(x)({\H}\psi)(x).
	\end{align*}
	Furthermore,  $\H C_{c}(X)\subseteq \ell^{2}(X)$, so if $\ph,\psi\in
	C_{c}(X)$, then
	\begin{align*}
	h(\ph,\psi)=\langle{{\H}\ph},{\psi}\rangle=\langle{\ph},{\H}{\psi}\rangle.
	\end{align*}
	Hence, we can recover $ H $ from $ h $, so we also speak of $ H $ associated to $ h. $
\end{lem}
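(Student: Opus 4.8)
The plan is to reduce everything to a single absolute‑convergence estimate; once all the double sums appearing in the statement are known to converge absolutely, the identities follow from the symmetry $b(x,y)=b(y,x)$ together with Fubini's theorem. Write $K:=\supp\psi$, which is finite. The summand $b(x,y)(f(x)-f(y))(\psi(x)-\psi(y))$ vanishes unless $x\in K$ or $y\in K$; splitting the sum over the pieces $\{x\in K\}$ and $\{y\in K,\ x\notin K\}$ and using that the summand is symmetric under $x\leftrightarrow y$ (because $b$ is), one obtains
\[
\sum_{x,y\in X}b(x,y)\,|f(x)-f(y)|\,|\psi(x)-\psi(y)|\ \le\ 4\|\psi\|_\infty\sum_{x\in K}\Bigl(|f(x)|\sum_{y\in X}b(x,y)+\sum_{y\in X}b(x,y)\,|f(y)|\Bigr),
\]
which is finite since $K$ is finite, $b$ is locally summable, and $f\in\F(X)$. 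The same bound controls the cross sums $\sum_{x,y}b(x,y)|f(x)|\,|\psi(y)|$ and $\sum_{x,y}b(x,y)|f(y)|\,|\psi(x)|$, while $\sum_{x\in K}|(\L f)(x)|<\infty$ (each term being finite precisely because $f\in\F(X)$) and $\sum_{x\in K}|q(x)f(x)|<\infty$, so $\sum_x|(\H f)(x)|\,|\psi(x)|<\infty$ and $\sum_x|f(x)|\,|(\H\psi)(x)|<\infty$ as well.

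With this in hand the identity is bookkeeping: expand $(f(x)-f(y))(\psi(x)-\psi(y))$ into four terms; by $b(x,y)=b(y,x)$ the summed $f(x)\psi(x)$‑ and $f(y)\psi(y)$‑terms coincide and the two mixed terms coincide, so
\[
\tfrac12\sum_{x,y\in X}b(x,y)(f(x)-f(y))(\psi(x)-\psi(y))=\sum_{x,y\in X}b(x,y)f(x)(\psi(x)-\psi(y)),
\]
and summing over $y$ first (Fubini) turns the right‑hand side into $\sum_x f(x)(\L\psi)(x)$; adding $\sum_x q(x)f(x)\psi(x)$ yields the equality with $\sum_x f(x)(\H\psi)(x)$. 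For the equality with $\sum_x(\H f)(x)\psi(x)$ I would start instead from $\sum_x\psi(x)(\L f)(x)=\sum_{x,y}b(x,y)\psi(x)(f(x)-f(y))$ (legitimate because $\psi$ has finite support and $f\in\F(X)$), relabel $x\leftrightarrow y$, average the two expressions, and use $\psi(x)(f(x)-f(y))+\psi(y)(f(y)-f(x))=(f(x)-f(y))(\psi(x)-\psi(y))$; adding the potential term once more reproduces the energy expression.

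Finally, for $\H C_c(X)\subseteq\ell^2(X)$: let $\ph\in C_c(X)$ and $K:=\supp\ph$. For $x\notin K$ one has $(\L\ph)(x)=-\sum_{y\in K}b(x,y)\ph(y)$, hence $|(\L\ph)(x)|\le\|\ph\|_\infty\sum_{y\in K}b(x,y)$, and by the Cauchy–Schwarz inequality
\[
\sum_{x\notin K}|(\L\ph)(x)|^2\ \le\ \|\ph\|_\infty^2\,|K|\sum_{y\in K}\sum_{x\in X}b(x,y)^2\ \le\ \|\ph\|_\infty^2\,|K|\sum_{y\in K}\Bigl(\sum_{x\in X}b(x,y)\Bigr)^2<\infty,
\]
using $b(x,y)^2\le b(x,y)\sum_{z}b(z,y)$ and local summability (with symmetry). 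Together with the finitely many values $(\L\ph)(x)$, $x\in K$, and $q\ph\in\ell^2(X)$, this gives $\H\ph=\L\ph+q\ph\in\ell^2(X)$. The last two identities of the lemma are then the first display read with $f=\ph\in C_c(X)\subseteq\F(X)$, the two finite sums being exactly $\langle\H\ph,\psi\rangle$ and $\langle\ph,\H\psi\rangle$, and $H$ is recovered from $h$ because $C_c(X)$ separates the points of $C(X)$. The only genuinely delicate point is the convergence estimate of the first paragraph — once absolute convergence is in place, symmetry and Fubini do the rest — so that is where I would be careful, in particular about the overlap of the two index sets when splitting the double sum.
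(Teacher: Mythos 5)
Your proof is correct and follows exactly the route the paper indicates: the paper itself does not prove Lemma~\ref{l:Green} but defers to \cite[Lemma 4.7]{HK}, describing the argument as a direct algebraic manipulation once absolute convergence of all the double sums is secured via local summability, $f\in\F(X)$ and Fubini --- which is precisely what you carry out, including the correct splitting of the index set over $\supp\psi$ and the Cauchy--Schwarz step for $\H C_c(X)\subseteq\ell^2(X)$.
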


\section{Self-adjoint realization of the operator}\label{s:SA_operators}
In this section we discuss the closability  of the form $h$ in weighted
$\ell^{2}$-spaces over $X$. This allows us to define a self-adjoint
operator which coincides with $\H$ in the case of the counting
measure. This opens the door to the use of operator theoretic
arguments in the succeeding discussion.

The next theorem states that we can always close a {\em nonnegative} form $h$
on $\ell^{2}(X,m)$ for every measure $m$, and that the corresponding
selfadjoint operator $H^{(m)}$ acts as $\frac{1}{m}\H$ on functions
in $\F\cap D(H^{(m)})$, where $D(H^{(m)})$ is the domain  of $H^{(m)}$.

We recall that a form $h\ge0$ on $C_{c}(X)$
	is {\em closable} in $\ell^{2}(X,m)$ if a Cauchy
	sequence $(\ph_{n})$ in $C_{c}(X)$ with respect to the form norm $\|\cdot\|_{h,m}:={(h(\cdot)+\|\cdot\|_{m}^{2})^{1/2}}$ such that  $\|\ph_{n}\|_{m}\to 0 $
	satisfies $h(\ph_{n})\to0$, as $n\to\infty$. The closure  $h^{(m)}$
	is then the extension of $h$ to $\ov{C_{c}(X)}^{\|\cdot\|_{h,m}}$.
	
	Recall also that $ D(H^{(m)}) $, the domain  of the corresponding
	selfadjoint operator $H^{(m)}$, is the linear space of  all functions $f\in D(h^{(m)}) $ such that there is $ g \in \ell^{2}(X,m)$ so that $ h^{(m)}(f,h)=\langle g,h\rangle_{m} $ for all $ h\in D(h^{(m)}) $.
\begin{thm}\label{t:closable} Let $b$ be a graph over $X$ and  $q$ be a potential such that
	$h\ge 0$ on $ C_{c}(X)$. Then, $h$ is closable in $\ell^{2}(X,m)$
	for each measure $m:X\to(0,\infty)$. The closure $ h^{(m)} $ of $ h $  satisfies the first Beurling-Deny criterium, i.e., for all $ f\in D(h^{(m)}) $ we have $ |f|\in D(h^{(m)}) $ and
	\begin{align*}
	h^{(m)}(|f|)\leq h^{(m)}(f).
	\end{align*}
	Moreover, the selfadjoint operator
	$H^{(m)}$ associated  with $ h^{(m)}$ satisfies for
	all $f\in D(H^{(m)})\cap \F(X)$
	$$H^{(m)}f=\frac{1}{m}{\H}f.$$
\end{thm}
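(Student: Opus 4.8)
\medskip
\noindent\textbf{Proof plan.}
The only delicate point is closability, since a nonnegative quadratic form need not be closable and here $q$ may genuinely have a negative part. The plan is: (i) use $h\ge0$ to produce a strictly positive supersolution $w\in\F(X)$; (ii) apply the ground state transform with weight $w$ to reduce to a nonnegative Schr\"odinger form with a \emph{nonnegative} potential, for which closability, the first Beurling--Deny criterium, and the identification of the operator are all routine; (iii) transport these conclusions back along the unitary induced by multiplication by $w$; (iv) read off the last assertion from Green's formula.

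\medskip
\noindent\emph{Step 1 (the positive supersolution --- the main obstacle).}
Restricting to a connected component, assume $b$ connected; fix $o$ and exhaust $X$ by finite connected sets $X_1\subseteq X_2\subseteq\dots$ containing $o$. If $H_n$ is the symmetric matrix of $h$ restricted to $C_c(X_n)$, then $H_n\ge0$ (as $h\ge0$) and its off-diagonal entries are $-b(x,y)\le0$; since $X_n$ is connected, Perron--Frobenius yields a strictly positive bottom eigenfunction $w_n>0$ on $X_n$ with eigenvalue $\lm_n\ge0$, and extending by zero a direct computation gives $\H w_n=\lm_n w_n\ge0$ on $X_n$. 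Normalizing $w_n(o)=1$, the inequality $\H w_n(x)\ge0$ together with $w_n>0$ yields, along any edge inside $X_n$, two-sided comparisons of the values of $w_n$ with constants depending only on $b$ and $q$; chaining along a fixed path from $o$ bounds $w_n(x)$ above and away from $0$ uniformly in $n$, for every fixed $x$. A diagonal argument produces a pointwise limit $w>0$ with $w(o)=1$, and Fatou's lemma applied to $\H w_n(x)=w_n(x)\big(\sum_y b(x,y)+q(x)\big)-\sum_y b(x,y)w_n(y)$ forces $\sum_y b(x,y)w(y)<\infty$ (otherwise $\L w_n(x)\to-\infty$, against $\L w_n(x)\ge-q(x)w_n(x)$) and then $\H w(x)\ge0$. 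So $w\in\F(X)$ is the required supersolution. This limiting step is where the real difficulty lies.

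\medskip
\noindent\emph{Step 2 (ground state transform and the nonnegative case).}
For $\psi\in C_c(X)$ one has $w\psi\in C_c(X)$ and the ground state transform --- a direct algebraic manipulation legitimized by Green's formula (Lemma~\ref{l:Green}) --- gives
\[
h(w\psi)=\frac12\sum_{x,y\in X}b(x,y)w(x)w(y)\big(\psi(x)-\psi(y)\big)^2+\sum_{x\in X}(\H w)(x)\,w(x)\,\psi(x)^2 .
\]
Set $\ow{b}(x,y):=w(x)w(y)b(x,y)$, which is locally summable since $w\in\F(X)$, $\ow m:=w^2m$, and $\ow q:=(\H w)/(wm)\ge0$. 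Then $U\colon\ph\mapsto\ph/w$ is a bijection of $C_c(X)$ that extends to a unitary $\ell^2(X,m)\to\ell^2(X,\ow m)$, and $h(\ph)=\ow h(U\ph)$ where $\ow h$ is the Schr\"odinger form of $\ow b$ over $(X,\ow m)$ with the nonnegative potential $\ow q$; in particular $U$ is an isometry for the form norms. Now $\ow h=\ow h_0+\ow h_{\ow q}$ with $\ow h_0$ the Dirichlet-type form of $\ow b$, which is closable in $\ell^2(X,\ow m)$ by the standard argument that the gradients of a form-Cauchy, pointwise-null sequence converge to $0$ in $\ell^2$ (cf.\ \cite{HK}), and $\ow h_{\ow q}$ a closable nonnegative multiplication form; a sum of closable nonnegative forms is closable, so $\ow h$ is closable. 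Since $\ow q\ge0$ and $\big|\,|a|-|b|\,\big|\le|a-b|$, one has $\ow h(|\psi_n|-|\psi_k|)\le\ow h(\psi_n-\psi_k)$ on $C_c(X)$, which upgrades by approximation to the first Beurling--Deny criterium for $\ow h^{(\ow m)}$. Transporting through $U$, using $U|f|=|f|/w=|f/w|=|Uf|$ (as $w>0$), gives closability of $h$ in $\ell^2(X,m)$, the identity $D(h^{(m)})=w\cdot D(\ow h^{(\ow m)})$, and $h^{(m)}(|f|)=\ow h^{(\ow m)}(|Uf|)\le\ow h^{(\ow m)}(Uf)=h^{(m)}(f)$.

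\medskip
\noindent\emph{Step 3 ($H^{(m)}$ acts as $\tfrac1m\H$).}
Let $f\in D(H^{(m)})\cap\F(X)$ and $\psi\in C_c(X)$; as $C_c(X)$ is a form core, pick $\ph_n\in C_c(X)$ with $\ph_n\to f$ in the $h$-form norm, so $U\ph_n\to f/w$ in the $\ow h$-form norm and, since $\ow q\ge0$ controls the Dirichlet part, the gradients of $U\ph_n$ converge both in $\ell^2$ and pointwise to those of $f/w$. Hence $h^{(m)}(f,\psi)=\lim_n h(\ph_n,\psi)$ equals the polarized ground state transform at $(f,\psi)$, which by Green's formula (Lemma~\ref{l:Green}, valid for $f\in\F(X)$, $\psi\in C_c(X)$) equals $\sum_x(\H f)(x)\psi(x)=\as{\tfrac1m\H f,\psi}_m$. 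On the other hand $h^{(m)}(f,\psi)=\as{H^{(m)}f,\psi}_m$ by definition of $H^{(m)}$; testing against $\psi=\delta_x$ for every $x$ gives $H^{(m)}f=\tfrac1m\H f$, completing the proof.
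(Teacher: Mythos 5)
Your route is genuinely different from the paper's. The paper obtains closability by citing \cite[Theorem~2.9]{GKS}, proves the first Beurling--Deny criterium by lower semicontinuity of the closed form along an approximating sequence, and identifies $H^{(m)}f=\frac1m\H f$ via a dominated approximating sequence $|f_n|\le|f|$ (Lemma~\ref{l:wlogmonotone}) and dominated convergence. You instead construct a strictly positive supersolution by a finite-exhaustion/Perron--Frobenius/Harnack-chaining argument --- in effect re-proving the hard direction of the Allegretto--Piepenbrink theorem without resolvents, which correctly avoids the circularity that would arise from invoking Theorem~\ref{t:AP} (whose proof uses the present theorem) --- and then reduce to a form with nonnegative potential via the ground state transform. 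Steps 1 and 3 are sound; in Step 3 you should spell out the Cauchy--Schwarz estimate $\sum_{y}\ow b(x,y)|g(y)|\le\bigl(2\sum_{y}\ow b(x,y)\bigr)^{1/2}\ow h_0(g)^{1/2}+|g(x)|\sum_{y}\ow b(x,y)$, which is what actually yields the pointwise convergence of $\L\ph_n(x)$; this control is available only after the transform, since for sign-changing $q$ the form norm of $h$ does not dominate the Dirichlet part.

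There is, however, a genuine error in Step 2: the inequality $\ow h(|\psi_n|-|\psi_k|)\le\ow h(\psi_n-\psi_k)$ is false. Take an edge $x\sim y$ and $\psi_n=1_{\{x\}}$, $\psi_k=-1_{\{y\}}$. Then $\psi_n-\psi_k=1_{\{x\}}+1_{\{y\}}$ has increment $0$ across the edge $xy$, while $|\psi_n|-|\psi_k|=1_{\{x\}}-1_{\{y\}}$ has increment $2$ there, so the Dirichlet part on the left exceeds the one on the right (the potential parts agree since $(|\psi_n|-|\psi_k|)^2=(\psi_n-\psi_k)^2$ pointwise here). The elementary inequality $\bigl||a|-|b|\bigr|\le|a-b|$ only gives $\ow h(|\psi|)\le\ow h(\psi)$ for a \emph{single} function; it does not make $(|\psi_n|)$ a form-Cauchy sequence, so you cannot conclude $|f|\in D(\ow h^{(\ow m)})$ this way. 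The standard repair is exactly the paper's argument: $\ow h(|\psi_n|)\le\ow h(\psi_n)$ is bounded, $|\psi_n|\to|f|$ in $\ell^2(X,\ow m)$ and hence pointwise, and lower semicontinuity of the closed form yields $|f|\in D(\ow h^{(\ow m)})$ together with
\begin{align*}
\ow h^{(\ow m)}(|f|)\leq\liminf_{n\to\infty}\ow h(|\psi_n|)\leq\lim_{n\to\infty}\ow h(\psi_n)=\ow h^{(\ow m)}(f).
\end{align*}
With that substitution your proof goes through.
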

\begin{proof}
	The form $h$ is closable in any $\ell^{2}(X,m)$ by
	\cite[Theorem~2.9]{GKS}. We denote the closure by $ h^{(m)}. $
	
	We show next that the inequality
	\begin{align*}
	h(|\ph|)\leq h(\ph),\qquad \ph\in C_{c}(X),
	\end{align*}
	extends to functions $ f\in D(h^{(m)}) $.
	Let  $ f\in D(h^{(m)}) $ and let $ (\ph_{n}) $ be a sequence in $ C_{c}(X) $ that converges to $ f $ with respect to with respect to the form norm
	$$\|\cdot\|_{h^{(m)}}:=  \sqrt{h^{(m)}(\cdot)+\|\cdot\|^{2}_m}\;.$$  Then,
	\begin{align*}
	h^{(m)}(|\ph_{n}|)=h(|\ph_{n}|)\leq h(\ph_{n})=h^{(m)}(\ph_{n})\to h^{(m)}(f).
	\end{align*}
	Hence, the sequence $ (h^{(m)}(|\ph_{n}|)) $ is bounded and $ (|\ph_{n}|) $ converges to $ |f| $ in $ \ell^{2}(X,m) $ and, therefore, also pointwise. Since $ h^{(m)} $ is closed, it is lower semicontinuous. Hence,
	\begin{align*}
	h^{(m)}(|f|)\leq \liminf_{n\to\infty}h^{(m)}(|\ph_{n}|)<\infty,
	\end{align*}
	which yields $ |f|\in D(h^{(m)}) $ and the inequality
	\begin{align*}
	h^{(m)}(|f|)\leq h^{(m)}(f).
	\end{align*}
	
	Now, let $H^{(m)}$ be the selfadjoint  operator
	associated to the closure $h^{(m)}$ in $\ell^{2}(X,m)$. Let $f\in
	D(H^{(m)})\cap \F(X)$.
	Using Lemma~\ref{l:wlogmonotone} below, assume that there exist  $f_{n}\in C_{c}(X)$  such that $|f_{n}|\leq |f|$,
	and $(f_{n})$ converges to $f$ with respect to the form norm $\|\cdot\|_{h^{(m)}}$.
	Then,   by the Green formula we have with $\delta_{x}:=1_{\{x\}}/m(x)$
	\begin{align*}
	h^{(m)}(f_{n},\delta_{x})=
	h(f_{n},\delta_{x})=\langle{{\H}f_{n}},{\delta_{x}}\rangle_{1}
	=\frac{1}{m(x)}{\H}f_{n}(x).
	\end{align*}
	Again, we use the fact that $\ell^{2}$-convergence on a discrete space implies pointwise
	convergence. Thus,  by Lebesgue's dominated convergence theorem,
	with $f\in \F(X)$ and $|f_{n}|\leq|f|$, we get
	\begin{align*}
	\lim_{n\to\infty}\frac{1}{m}{\H}f_{n}(x)
	&    \!=\!\frac{f(x)}{m(x)}\!\!\left({\sum_{y\in X}b(x,y)+q(x)}\!\right)-
	\lim_{n\to\infty}\!\!\left(\!\!\frac{1}{m(x)}\sum_{y\in X}b(x,y)f_{n}(y)\!\right)\\
	&   =\frac{1}{m(x)}{\H}f(x). \qedhere
	\end{align*}
\end{proof}
\begin{lem} \label{l:wlogmonotone}
Let $b$ be a graph over $X$ and assume that $\ov h$ is a quadratic form which is closed
in $\ell^2(X,m)$ for some measure $m:X \to (0,\infty)$. Then, for every $f \in D(\ov h) \cap F(X)$,
there exists a sequence $(f_n)$ in $C_c(X)$ such that $|f_n| \leq |f|$ for all $n \in \mathbb{N}$
and $f_n \to f$ in $\|\cdot\|_{\ov h}$.
\end{lem}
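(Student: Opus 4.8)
The plan is to approximate $f$ in the form norm by compactly supported functions, to truncate these pointwise at the levels $\pm|f|$ so as to enforce $|f_{n}|\le|f|$, and then to turn the resulting $\ell^{2}$-convergence into form-norm convergence by a weak-compactness argument.

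First I would use that $\ov h$ is the closure of $h$ acting on $C_{c}(X)$ — so that $C_{c}(X)$ is a form core and $\ov h$ restricts to $h$ there — to fix a sequence $(\varphi_{n})$ in $C_{c}(X)$ with $\varphi_{n}\to f$ in the form norm $\|\cdot\|_{\ov h}=(\ov h(\cdot)+\|\cdot\|_{m}^{2})^{1/2}$, and set
\[
f_{n}:=(-|f|)\vee\big(\varphi_{n}\wedge|f|\big).
\]
Then $f_{n}\in C_{c}(X)$ because $\supp f_{n}\subseteq\supp\varphi_{n}$, and $|f_{n}|\le|f|$ by construction. Form-norm convergence implies $\ell^{2}(X,m)$-convergence, which on the discrete space $X$ implies pointwise convergence; since, for each $x$, the truncation $s\mapsto(-|f(x)|)\vee(s\wedge|f(x)|)$ is continuous and fixes $f(x)$, it follows that $f_{n}(x)\to f(x)$ for every $x$, and then dominated convergence (with $|f_{n}|\le|f|\in\ell^{2}(X,m)$) gives $f_{n}\to f$ in $\ell^{2}(X,m)$.

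The crux is to show $\sup_{n}\ov h(f_{n})<\infty$. For this I would use that $h$ is \emph{submodular} on $C_{c}(X)$, i.e. $h(\psi\wedge\eta)+h(\psi\vee\eta)\le h(\psi)+h(\eta)$ for all $\psi,\eta\in C_{c}(X)$; indeed the Laplacian part $u\mapsto\tfrac12\sum_{x,y}b(x,y)(u(x)-u(y))^{2}$ of $h$ satisfies this by the Markov property of the graph form (an elementary inequality at each edge), while the potential part $u\mapsto\sum_{x}q(x)u(x)^{2}$ is unchanged in the relation, since $\{(\psi\wedge\eta)(x),(\psi\vee\eta)(x)\}=\{\psi(x),\eta(x)\}$ for every $x$. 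By the lower semicontinuity of the closed form $\ov h$, this submodularity extends to all $\psi,\eta\in D(\ov h)$. Using $\ov h\ge0$ to drop the $\vee$-term, and that $D(\ov h)$ is a vector space containing $\pm f$, one gets first $|f|=f\vee(-f)\in D(\ov h)$ with $\ov h(|f|)\le\ov h(f)$, and then
\[
\ov h(f_{n})=\ov h\big((\varphi_{n}\wedge|f|)\vee(-|f|)\big)\le\ov h(\varphi_{n}\wedge|f|)+\ov h(|f|)\le\ov h(\varphi_{n})+2\ov h(|f|)\le\ov h(\varphi_{n})+2\ov h(f),
\]
which is bounded in $n$ because $(\varphi_{n})$ converges in form norm. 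I expect this to be the only genuinely delicate step: one cannot estimate the Laplacian and potential parts of $h(f_{n})$ separately when $q$ fails to be sign-definite, so everything has to be routed through submodularity, and the passage from $C_{c}(X)$ to $D(\ov h)$ calls for some care with lower semicontinuity (together with the fact that $\wedge$ and $\vee$ are contractions on $\ell^{2}(X,m)$).

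Finally, with $\sup_{n}\ov h(f_{n})<\infty$ the sequence $(f_{n})$ is bounded in the Hilbert space $\big(D(\ov h),\langle\cdot,\cdot\rangle_{\ov h}\big)$, $\langle u,v\rangle_{\ov h}:=\ov h(u,v)+\langle u,v\rangle_{m}$. I would pass to a subsequence converging weakly there; its weak limit must be $f$, since the subsequence converges to $f$ in $\ell^{2}(X,m)$ and, for each fixed $v\in\ell^{2}(X,m)$, the functional $u\mapsto\langle u,v\rangle_{m}$ is continuous on this Hilbert space (because $\|u\|_{m}\le\|u\|_{\ov h}$). By Mazur's lemma some sequence of finite convex combinations of the $f_{n}$ converges to $f$ in $\|\cdot\|_{\ov h}$; these combinations still lie in $C_{c}(X)$ (finite sums of compactly supported functions) and still satisfy $|\cdot|\le|f|$, because $\big|\sum_{i}\lambda_{i}f_{n_{i}}\big|\le\sum_{i}\lambda_{i}|f_{n_{i}}|\le|f|$ whenever $\lambda_{i}\ge0$ and $\sum_{i}\lambda_{i}=1$. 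Relabelling yields the desired sequence.
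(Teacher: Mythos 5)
Your proof is correct, but it takes a genuinely different (and heavier) route than the paper's. The paper also starts from a core sequence $(g_n)$, but after reducing to $f\ge 0$, $g_n\ge 0$ it sets $f_n:=g_n\wedge f=(f+g_n-|f-g_n|)/2$ and estimates $\ov h(f-f_n)$ \emph{directly}: writing $f-f_n=\tfrac12\bigl((f-g_n)+|f-g_n|\bigr)$, expanding the square and using only the first Beurling--Deny inequality $\ov h(|u|)\le\ov h(u)$ together with Cauchy--Schwarz gives $\ov h(f-f_n)\le\ov h(f-g_n)$, and likewise $\|f-f_n\|_m\le\|g_n-f\|_m$, so the truncated sequence itself converges in $\|\cdot\|_{\ov h}$ with no compactness argument. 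You instead prove submodularity of $h$ (edge-wise for the graph part, exact equality for the potential part --- which is indeed the right way to handle sign-changing $q$, since the two parts of $h(f_n)$ cannot be estimated separately), extend it to $D(\ov h)$ by lower semicontinuity, deduce the uniform bound $\sup_n\ov h(f_n)<\infty$, and then pass through weak compactness and Mazur's lemma; all of these steps check out, and in particular the convex combinations do remain in $C_c(X)$ and dominated by $|f|$. The trade-off: your two-sided truncation at $\pm|f|$ avoids the positive/negative-part reduction (which the paper states rather quickly and which itself requires a small argument that $g_{n,\pm}\to f_\pm$ in form norm), but you only obtain form-norm convergence of convex combinations of a subsequence rather than of the truncations themselves, whereas the paper gets a one-line quantitative contraction estimate in the form norm without any appeal to weak topologies.
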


\begin{proof}
	By definition of a closed form $\ov h$, there exists
	a sequence $(g_{n})\subset  C_{c}(X)$ such that $g_{n}\to f$ with respect
	$\|\cdot\|_{\ov h}$. By decomposing $f$ and $g_{n}$ into positive
	and negative part, it suffices to consider  $f\ge 0$ and $g_{n}\ge
	0$. Set $f_{n}:=g_{n}\wedge f=(f+g_{n}-|f-g_{n}|)/2$. Since
	$\ov h(|\ph|)\leq \ov h(\ph)$ for all $\ph\in D(\ov h)$, we
	obtain
	\begin{align*}
	\ov h&(f-f_{n})
	=  \frac{1}{4}\left({
		\ov h(f-g_{n})+2\ov h(f-g_{n},|f-g_{n}|) + \ov h(|f-g_{n}|)}\right)
	\\
	&\leq     \frac{1}{4}\left({ \ov h(f-g_{n})+2\ov h(f-g_{n})^{\frac{1}{2}}\ov h(|f-g_{n}|)^{\frac{1}{2}}
		+ \ov h(|f-g_{n}|)}\right)
	\leq \ov h(f-g_{n}).
	\end{align*}
	Similarly, $\|f_n-f\|_m\leq \|g_{n}-f\|_m$. Hence, $\|f_n-f\|_{\ov h}\leq \|g_{n}-f\|_{\ov h}$. Thus, the positive
	and negative parts of $f_n$ converge to $f_\pm$, and this finishes the proof.
\end{proof}

\begin{rem}
	One can characterize all positive closed forms on discrete sets that  have $ C_{c}(X) $ as a core and that satisfy the second Beurling-Deny criterion, i.e.,  $ h(f\vee 1)\leq h(f) $ for all $ f $ in the domain of $ h $, see \cite{KL1}. Those forms are exactly closures of a form $ h $ on $ C_{c}(X) $ as above associated with a graph $ b $  and a potential $ q\ge0 $. So, it is natural to ask the question whether our setting actually covers all positive closed forms that satisfy the first Beurling-Deny criterion
 $$  h(|\ph|)\leq h(\ph),\qquad \ph\in C_{c}(X).  $$
and have $ C_{c}(X) $ as a core. This is however not the case as the following example shows:
	
	Let $X=\N_{0}  $ and let $ h $ be a form on $ C_{c}(X) $ acting as
		\begin{align*}
		h(\ph)=\sum_{k\in \N_{0}} \ph(k)^{2}-\sum_{k\in \N}\frac{1}{k}\ph(0)\ph(k),\qquad\ph\in C_{c}(X).
		\end{align*}
	It is not hard to see that  $ h\ge0 $: If $ \ph(0)=0 $, then clearly $ h(\ph)\ge0 $, so assume $ \ph(0)=1 $. Then $ h(\ph)=1+\sum_{k\in\N}\ph(k)(\ph({k})-1/k) $. So minimizing each term in the sum  $ \ph(k)(\ph(k)-1/k) $ yields that the minimum is assumed for the function $ \ph_{0} $ such that $ \ph_{0}(k) =1/2k$, $ k\ge1 $ and $ \ph_{0}(0)=1 $. However, $ h(\ph_{0})=1-\sum_{k\in\N}1/(2k)^{2} =1-\pi^{2}/24\ge0$.
	  Furthermore,
	an immediate calculation yields that the associated bilinear form
	\begin{align*}
	 h(\ph,\psi)=\sum_{k\in \N_{0}} \ph(k)\psi(k)-\frac{1}{2}\sum_{k\in \N}\frac{1}{k}(\ph(0)\psi(k)+\ph(k)\psi(0)),\qquad\ph\in C_{c}(X).
	\end{align*}
	satisfies $ h(1_{k},1_{l})\leq 0 $, $ k\neq l $. This, however, can  easily be seen to be equivalent to the first Beurling-Deny criterion.

 	Moreover, taking the counting measure one easily sees that every $ \ell^{2}(\N_{0}) $-sequence $ (\ph_{n}) $ converging to zero  satisfies  $ h(\ph_{n})\to0 $
	since $| h( \ph)|\leq (1+\pi^{2}/6)\|\ph\|^{2} $ for all $ \ph\in C_{c}(X) $.
	Hence, $ h $  is closable in $ \ell^{2}(\N_{0}) $. On the other hand, it is also obvious that one can not write $ h $ as a form associated to a graph $ b $ and a potential $ q $ as above (indeed, one would get the difference of two divergent sums).
\end{rem}

Returning to the operator $H^{(m)}$ from above, we denote its spectrum
by $\sigma(H^{(m)})$. The well known Laplace transform of $ H^{(m)} $ is used to related the resolvent and the semigroup.

\begin{lem}[Laplace transform]\label{l:Laplace} Let $b$ be a  graph over $X$, let $q$ be a potential such that $h\ge0$ on $C_{c}(X)$ and let $ m $ be a measure of full support on $ X $.
	For all  $ \lm<\inf\si(H^{(m)}) $ on $ \ell^{2}(X,m) $ and $ f\in\ell^{2}(X,m) $
	\begin{align*}
	(H^{(m)}-\lm)^{-1}f=\int_{0}^{\infty}\mathrm{e}^{t\lm}\mathrm{e}^{-tH^{(m)}} f\dt.
	\end{align*}
\end{lem}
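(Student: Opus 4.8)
The plan is to derive the identity directly from the spectral theorem for the self-adjoint operator $H^{(m)}$ associated with the closed form $h^{(m)}$ of Theorem~\ref{t:closable}. Since $h\ge 0$ on $C_c(X)$, the operator $H^{(m)}$ is bounded below and $\si(H^{(m)})\subseteq[\inf\si(H^{(m)}),\infty)$ with $\inf\si(H^{(m)})>\lm$ by hypothesis. Let $E$ denote the projection-valued spectral measure of $H^{(m)}$. Then the functional calculus gives the representations $\mathrm{e}^{-tH^{(m)}}=\int_{\si(H^{(m)})}\mathrm{e}^{-ts}\,\mathrm{d}E(s)$ for $t\ge 0$ and $(H^{(m)}-\lm)^{-1}=\int_{\si(H^{(m)})}(s-\lm)^{-1}\,\mathrm{d}E(s)$, the latter being a bounded operator because $s\mapsto(s-\lm)^{-1}$ is bounded and continuous on $\si(H^{(m)})$.

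First I would check that the right-hand side is a well-defined (Bochner) integral in $\ell^{2}(X,m)$. From the functional calculus one has $\|\mathrm{e}^{-tH^{(m)}}f\|_m\le \mathrm{e}^{-t\inf\si(H^{(m)})}\|f\|_m$, hence
\[
\|\mathrm{e}^{t\lm}\mathrm{e}^{-tH^{(m)}}f\|_m\le \mathrm{e}^{t(\lm-\inf\si(H^{(m)}))}\|f\|_m ,
\]
and the exponent is negative, so $t\mapsto\mathrm{e}^{t\lm}\mathrm{e}^{-tH^{(m)}}f$ is Bochner integrable on $(0,\infty)$ and $g_\lm:=\int_0^\infty\mathrm{e}^{t\lm}\mathrm{e}^{-tH^{(m)}}f\dt$ exists.

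Next I would identify $g_\lm$ by pairing with an arbitrary $\psi\in\ell^{2}(X,m)$. Using the spectral representation of the semigroup and writing $\mu_{f,\psi}(\cdot):=\langle E(\cdot)f,\psi\rangle_m$, which is a finite signed measure with total variation at most $\|f\|_m\|\psi\|_m$ (e.g.\ by polarization into the finite positive measures $\langle E(\cdot)u,u\rangle_m$, or by Cauchy--Schwarz), we get
\[
\langle g_\lm,\psi\rangle_m=\int_0^\infty\!\!\int_{\si(H^{(m)})}\mathrm{e}^{t\lm}\mathrm{e}^{-ts}\,\mathrm{d}\mu_{f,\psi}(s)\dt .
\]
On $\si(H^{(m)})\times(0,\infty)$ the integrand is dominated by $\mathrm{e}^{t(\lm-\inf\si(H^{(m)}))}$, which is integrable against $|\mu_{f,\psi}|\otimes\mathrm{d}t$, so Fubini's theorem applies; since $\int_0^\infty\mathrm{e}^{t\lm}\mathrm{e}^{-ts}\dt=(s-\lm)^{-1}$ for every $s\in\si(H^{(m)})$, this equals $\int_{\si(H^{(m)})}(s-\lm)^{-1}\,\mathrm{d}\mu_{f,\psi}(s)=\langle(H^{(m)}-\lm)^{-1}f,\psi\rangle_m$. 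As $\psi$ is arbitrary, $g_\lm=(H^{(m)}-\lm)^{-1}f$.

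The only genuinely delicate point is the interchange of the $t$-integral with the spectral integral, which rests on the fact that $\mu_{f,\psi}$ has finite total variation together with the uniform exponential bound on the integrand; everything else is the elementary scalar computation $\int_0^\infty\mathrm{e}^{-t(s-\lm)}\dt=(s-\lm)^{-1}$ and the semigroup norm estimate. If one prefers to avoid Fubini, an equivalent route is to prove the identity first on the dense set of vectors of the form $E(B)f$ with $B\subseteq\R$ a bounded Borel set --- where both sides reduce to norm-convergent integrals of bounded operators and the computation is immediate --- and then pass to the limit using the estimate from the second paragraph, which shows both sides depend continuously on $f\in\ell^{2}(X,m)$.
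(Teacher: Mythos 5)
Your argument is correct and is exactly the approach the paper intends: the paper's proof is the single line ``This follows from the spectral theorem,'' and you have supplied the standard expansion of that remark via the spectral measure, the Bochner integrability estimate, and Fubini. No further comment is needed.
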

\begin{proof}This follows from the spectral theorem.
\end{proof}

With the help of this lemma we remark the following fact for the resolvent of $ H^{(m)} $.

\begin{cor}\label{c:posivitity_preserving}
	Let $b$ be a graph over $X$ and  $q$ be a potential such that
	$h\ge 0$ on $ C_{c}(X)$. Then, for any measure $ m:X\to(0,\infty) $  the resolvent of the operator $ H^{(m)}  $ is positivity preserving, i.e. for any $ f\in \ell^{2}(X,m) $ with $ f\ge0 $ we have $ (H^{(m)}-\lm)^{-1}f\ge0  $ for all $ \lm<\inf\si(H^{(m)}) $.
\end{cor}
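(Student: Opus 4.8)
The plan is to obtain the positivity of the resolvent from the first Beurling--Deny criterion established in Theorem~\ref{t:closable}, using the Laplace transform identity of Lemma~\ref{l:Laplace} to pass from the semigroup to the resolvent.

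The first step is to upgrade the form inequality $h^{(m)}(|f|)\le h^{(m)}(f)$ of Theorem~\ref{t:closable} to the statement that the semigroup $(\mathrm e^{-tH^{(m)}})_{t\ge 0}$ is positivity preserving, i.e., $f\ge 0$ implies $\mathrm e^{-tH^{(m)}}f\ge 0$ for all $t\ge 0$. This is the classical Beurling--Deny correspondence for closed nonnegative forms, which asserts precisely that the first Beurling--Deny criterion holds for $h^{(m)}$ if and only if the associated semigroup leaves the cone of nonnegative functions invariant; I would invoke this here, so that Theorem~\ref{t:closable} delivers positivity of the semigroup directly.

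Next I would fix $f\in\ell^2(X,m)$ with $f\ge 0$ and $\lambda<\inf\sigma(H^{(m)})$ and apply Lemma~\ref{l:Laplace} to write
\[
(H^{(m)}-\lambda)^{-1}f=\int_0^\infty\mathrm e^{t\lambda}\,\mathrm e^{-tH^{(m)}}f\dt,
\]
the integral being understood as a Bochner integral in $\ell^2(X,m)$. Since for each $x\in X$ the point evaluation $g\mapsto g(x)=\langle g,1_{\{x\}}\rangle_m/m(x)$ is a bounded linear functional on $\ell^2(X,m)$, it commutes with the Bochner integral, and the integrand $\mathrm e^{t\lambda}\,(\mathrm e^{-tH^{(m)}}f)(x)$ is nonnegative for every $t$ by the first step; hence $\big((H^{(m)}-\lambda)^{-1}f\big)(x)\ge 0$ for every $x\in X$, which is the claim. (If one wishes to sidestep properties of the Bochner integral, one can instead note that the approximating Riemann sums are manifestly nonnegative functions and that $\ell^2$-convergence on the discrete set $X$ forces pointwise convergence.)

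The step I expect to require the most care is the first one, namely the implication ``first Beurling--Deny criterion $\Rightarrow$ positivity-preserving semigroup'', which rests on standard form theory rather than on anything specific to graphs. If one prefers a self-contained route that avoids both this step and Lemma~\ref{l:Laplace}, there is a direct variational argument: since $\lambda<\inf\sigma(H^{(m)})$, the functional $g\mapsto h^{(m)}(g)-\lambda\|g\|_m^2-2\langle f,g\rangle_m$ on $D(h^{(m)})$ is strictly convex and coercive, with unique minimizer $u:=(H^{(m)}-\lambda)^{-1}f$; replacing $u$ by $|u|$ does not increase this functional --- using $h^{(m)}(|u|)\le h^{(m)}(u)$ from Theorem~\ref{t:closable}, $\||u|\|_m=\|u\|_m$, and $\langle f,|u|\rangle_m\ge\langle f,u\rangle_m$ since $f\ge 0$ --- so $u=|u|\ge 0$ by uniqueness of the minimizer.
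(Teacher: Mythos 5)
Your main argument is exactly the paper's proof: the first Beurling--Deny criterion from Theorem~\ref{t:closable} yields positivity preservation of the semigroup (via the classical Beurling--Deny correspondence, cited in the paper as \cite[Theorem~XIII.50]{RS4}), and the Laplace transform of Lemma~\ref{l:Laplace} transfers this to the resolvent. The self-contained variational argument you sketch at the end is also correct and is a nice way to bypass both the semigroup step and the Laplace transform, but it is not needed.
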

\begin{proof} Positivity preservation of the semigroup $ \mathrm{e}^{-tH^{(m)}} $ follows from the first Beurling-Deny criterium proven above, see \cite[Theorem~XIII.50]{RS4}. Hence, positivity preservation of the resolvent follows from the Laplace transform.
\end{proof}
To study the spectrum of the operator $H^{(m)}$ via solutions it is
essential to know the action of $H^{(m)}$ explicitly.  The theorem
above (Theorem~\ref{t:closable}) only guarantees this knowledge for
functions in $D(H^{(m)})\cap \F(X)$.

The following proposition provides us with two criteria to ensure
that $D(H^{(m)})\subseteq \F(X)$. The first is on the Laplacian part
${\L}$ of ${\H}$ and the second on the  potential $q$. Morally, the
assumption on ${\L}$ implies $\ell^{2}\subseteq \F(X)$ and the
assumption on $q$ implies that the functions of finite energy are
included in $\F(X)$. The statements can all be extracted from the
considerations in \cite{GKS} and \cite{KL2}. However, we include a
short proof for the convenience of the reader.

\begin{pro}\label{p:H} Let a graph $b$ over a discrete measure space
	$(X,m)$ be given, and let $q:X\to\R$ be such that $h\ge0$ on
	$C_{c}(X)$. Then, $D(H^{(m)})\subseteq \F(X)$ if one of the
	following assumptions is satisfied:
	\begin{itemize}
		\item[(a)] $\frac{1}{m}{\L}C_{c}(X)\subseteq \ell^{2}(X,m)$. This condition is
		in particular satisfied if one of the following assumptions
		holds:
		\begin{itemize}
			\item[(a1)] The graph $b$ is locally finite.
			\item[(a2)] The measure $m$ satisfies $\inf_{x\in X} m(x)>0$.
			\item[(a3)] The function
			$\mathrm{Deg}_{m}(x):=\frac{1}{m(x)}\sum_{y\in X}b(x,y)$ is bounded on $X$.
		\end{itemize}
		\item[(b)] For some $\eps>0$ we have $h(\ph)\ge \eps\langle{{q_{-}}\ph},{\ph}\rangle_{1}$ for all $  \ph\in C_{c}(X)$.
	\end{itemize}
\end{pro}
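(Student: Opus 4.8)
The plan is to establish $D(H^{(m)})\subseteq\F(X)$ in each of the two cases separately, using that $D(H^{(m)})\subseteq D(h^{(m)})\subseteq\ell^{2}(X,m)$. Thus under (a) it suffices to prove $\ell^{2}(X,m)\subseteq\F(X)$, while under (b) it suffices to prove that every $f\in D(h^{(m)})$ belongs to $\F(X)$.

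For case (a), I would insert $\mathbf 1_{\{x\}}\in C_{c}(X)$ into the hypothesis $\tfrac1m\L C_{c}(X)\subseteq\ell^{2}(X,m)$. Since $b(x,x)=0$, one computes $\L\mathbf 1_{\{x\}}(y)=-b(x,y)$ for $y\neq x$ and $\L\mathbf 1_{\{x\}}(x)=\sum_{z}b(x,z)$, so the hypothesis forces
$$\sum_{y\in X}\frac{b(x,y)^{2}}{m(y)}<\infty\qquad\text{for every }x\in X ,$$
and then for any $f\in\ell^{2}(X,m)$ the Cauchy--Schwarz inequality gives $\sum_{y}b(x,y)|f(y)|\le\big(\sum_{y}b(x,y)^{2}/m(y)\big)^{1/2}\|f\|_{m}<\infty$, i.e.\ $f\in\F(X)$. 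It remains to see that each of (a1)--(a3) implies (a). If $b$ is locally finite, then for $\ph\in C_{c}(X)$ the function $\L\ph$ is supported on the finite set consisting of $\supp\ph$ together with its neighbours, so $\tfrac1m\L\ph\in C_{c}(X)\subseteq\ell^{2}(X,m)$. If $\inf_{x}m(x)>0$, then $\|\tfrac1m\L\ph\|_{m}^{2}=\sum_{x}\tfrac1{m(x)}(\L\ph(x))^{2}\le(\inf_{x}m(x))^{-1}\|\L\ph\|^{2}$, which is finite since $\L\mathbf 1_{\{x\}}\in\ell^{2}(X)$ for every $x$ (because $\sum_{y}b(x,y)^{2}\le(\sum_{y}b(x,y))^{2}<\infty$), hence $\L C_{c}(X)\subseteq\ell^{2}(X)$ by linearity. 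If $\Deg_{m}$ is bounded, Cauchy--Schwarz in the inner sum gives $(\L\ph(x))^{2}\le\big(\sum_{y}b(x,y)\big)\sum_{y}b(x,y)(\ph(x)-\ph(y))^{2}$, whence $\sum_{x}\tfrac1{m(x)}(\L\ph(x))^{2}\le\big(\sup_{x}\Deg_{m}(x)\big)\sum_{x,y}b(x,y)(\ph(x)-\ph(y))^{2}<\infty$.

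For case (b), write $h=h_{0}+q_{+}-q_{-}$ on $C_{c}(X)$ with $h_{0}(\ph):=\tfrac12\sum_{x,y}b(x,y)(\ph(x)-\ph(y))^{2}$ (a finite quantity for $\ph\in C_{c}(X)$, by local summability) and $q_{\pm}(\ph):=\langle q_{\pm}\ph,\ph\rangle_{1}$. The assumption $h(\ph)\ge\eps\,q_{-}(\ph)$ is equivalent to $h_{0}(\ph)+q_{+}(\ph)\ge(1+\eps)q_{-}(\ph)$; combining this with the identity $h_{0}+q_{+}=h+q_{-}$ and using $q_{+}\ge0$ yields the coercivity estimate
$$h_{0}(\ph)\le\frac{1+\eps}{\eps}\,h(\ph)\qquad\text{for all }\ph\in C_{c}(X) .$$
Now fix $f\in D(h^{(m)})$ and a sequence $(\ph_{n})\subseteq C_{c}(X)$ with $\ph_{n}\to f$ in $\|\cdot\|_{h^{(m)}}$. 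By the estimate $(\ph_{n})$ is Cauchy, hence bounded, with respect to $h_{0}(\cdot)^{1/2}$, and $\ph_{n}\to f$ pointwise because $\ell^{2}(X,m)$-convergence on a discrete space is pointwise convergence. Fatou's lemma applied to the nonnegative summands $b(x,y)(\ph_{n}(x)-\ph_{n}(y))^{2}$ then gives $2h_{0}(f)=\sum_{x,y}b(x,y)(f(x)-f(y))^{2}\le\liminf_{n}2h_{0}(\ph_{n})<\infty$. Finally, for each $x\in X$,
\begin{align*}
\sum_{y}b(x,y)|f(y)| &\le\sum_{y}b(x,y)|f(x)-f(y)|+|f(x)|\sum_{y}b(x,y)\\
&\le\Big(\sum_{y}b(x,y)\Big)^{1/2}\big(2h_{0}(f)\big)^{1/2}+|f(x)|\sum_{y}b(x,y)<\infty ,
\end{align*}
so $f\in\F(X)$, and therefore $D(H^{(m)})\subseteq D(h^{(m)})\subseteq\F(X)$.

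The step I expect to be the main obstacle is case (b): the coercivity inequality holds a priori only on the core $C_{c}(X)$, and one must transfer the finiteness of the Dirichlet energy to the limit $f\in D(h^{(m)})$. This is precisely where the lower-semicontinuity (Fatou) argument on the nonnegative quantity $h_{0}$ is needed, since here $h^{(m)}$ is closed but $h_{0}$ itself is only being dominated by $h$, not assumed closed in $\ell^{2}(X,m)$. By contrast, case (a) and the verifications that (a1)--(a3) imply (a) are routine Cauchy--Schwarz estimates.
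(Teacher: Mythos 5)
Your proof is correct. Part (a) coincides with the paper's argument: the condition $\tfrac1m\L C_c(X)\subseteq\ell^2(X,m)$ is reduced, via characteristic functions, to $b(x,\cdot)/m(\cdot)\in\ell^2(X,m)$ for all $x$, and then Cauchy--Schwarz gives $\ell^2(X,m)\subseteq\F(X)$; your verifications of (a1)--(a3) are fine (for (a3) the paper instead deduces it from the pointwise bound $b(x,y)\le m(y)\,\mathrm{Deg}_m(y)$, but your Cauchy--Schwarz route works equally well since $\sum_{x,y}b(x,y)(\ph(x)-\ph(y))^2<\infty$ for $\ph\in C_c(X)$). In part (b) you take a slightly more self-contained route: the paper rewrites the hypothesis as $h_+\ge(1+\eps)\langle q_-\ph,\ph\rangle$ with $h_+=\langle(\L+q_+)\ph,\ph\rangle$, invokes standard (KLMN-type) perturbation theory to identify $D(h^{(m)})$ with the domain of the closure of $h_+$, and then uses the Fatou representation of that closure to conclude that elements of the domain have finite Dirichlet energy. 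You instead extract the equivalent coercivity estimate $h_0(\ph)\le\tfrac{1+\eps}{\eps}h(\ph)$ on $C_c(X)$ and pass to the limit directly by Fatou along an approximating sequence, which avoids the perturbation-theory citation altogether; the final step, bounding $\sum_y b(x,y)|f(y)|$ by $B(x)^{1/2}(2h_0(f))^{1/2}+|f(x)|B(x)$, is identical to the paper's. Your closing remark correctly identifies where the only non-routine work lies, namely transferring finiteness of the (non-closed) energy $h_0$ from the core to the form domain.
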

\begin{proof}
	(a) By definition of $H^{(m)}$ we have $D(H^{(m)})\subseteq
	\ell^{2}(X,m)$, and we show $\ell^{2}(X,m)\subseteq \F(X)$ under the
	assumption (a). By considering characteristic functions of vertices
	one finds that $\frac{1}{m}{\L}C_{c}(X)\subseteq \ell^{2}(X,m)$ is
	equivalent to the fact that the functions
	$\ph_{x}=b(x,\cdot)/m(\cdot)$, $x\in X$, are in $\ell^{2}(X,m)$.
	Hence, for $f\in \ell^{2}(X,m)$, we have by the Cauchy-Schwarz
	inequality
	\begin{align*}
	\sum_{y\in X}b(x,y)|f(y)|=\langle{\ph_{x}},{|f|}\rangle_{m}
	\leq \|\ph_{x}\|_{m} \|f\|_{m}\,.
	\end{align*}
	Thus, $f\in \F(X)$ and therefore, $D(H^{(m)})\subseteq
	\ell^{2}(X,m)\subseteq \F(X)$. The ``in particular'' statements
	follow easily from the characterization of
	$\frac{1}{m}{\L}C_{c}(X)\subseteq \ell^{2}(X,m)$ by $\ph_{x}\in
	\ell^{2}(X,m)$, $x\in X$.
	
	Assume
	$h_{+}(\ph):=\langle{({\L}+q_{+})\ph},{\ph}\rangle_{1}\ge (1+\eps)
	\langle{{q_{-}}\ph},{\ph}\rangle_{1},$ for all $\ph\in C_{c}(X) $.
	By standard perturbation theory,  the domain $D(h_{+,m})$ of the
	closure $h_{+,m}=\ov h_{+}$ of the quadratic form $h_{+}$ on $\ell^{2}(X,m)$ is equal
	to the domain $D(h^{(m)})$ of $h^{(m)}=\ov h$. (Recall that $h$ is
	closable in $\ell^{2}(X,m)$ by Theorem~\ref{t:closable}.) From standard
	arguments using Fatou's lemma it follows (cf. \cite{KL2}) that
	\begin{align*}
	h_{+,m}(f,f)=\frac{1}{2}\sum_{x,y\in
		X}b(x,y)(f(x)-f(y))^{2}+\sum_{x\in X}q_{+}(x)f(x)^{2}
	\end{align*}
	for $f\in D(h_{+,m})$, and therefore, $$D(h_{+,m})\!\subseteq\! \ow
	D\!:=\!\!\Big\{f\!\in\! C(X)\!\mid \! \ow {h}
	(f)\!:=\!\frac{1}{2}\!\!\sum_{x,y\in
		X}\!b(x,y)(f(x)-f(y))^{2}<\infty\!\Big\}.$$ So, one is left to
	check $\ow D \subseteq \F(X)$ which can be seen via the inequalities
	\begin{align*}
	{\sum_{y\in X}b(x,y)|f(y)|}&\leq {\sum_{y\in X}b(x,y)|f(x)-f(y)|}+
	|f(x)|B(x)\\
	&\leq \sqrt{2}B(x)^{1/2} \ow{h}(f)^{1/2}+
	|f(x)|B(x),
	\end{align*}
	where $B(x):=\sum_{y\in X}b(x,y)<\infty$ by our assumption.
\end{proof}
\begin{rem} Condition (a) of Proposition~\ref{p:H}
	is equivalent to the condition $C_{c}(X)\subseteq D(H^{(m)})$ which can be
	directly read from the Green formula, Lemma~\ref{l:Green}, and the abstract  definition of $D(H^{(m)})$. Furthermore, (a2) implies that
	$$D(H^{(m)})=\{f\in \ell^{2}(X,m)\mid Hf\in \ell^{2}(X,m)\},$$ whenever
	$q\ge0$, see \cite{KL2}. Finally, (a3) implies that the operator
	$H^{(m)}$ is bounded whenever $ q $ is bounded, see \cite[Theorem 9.3]{HKLW}.
\end{rem}
We next draw a corollary from Corollary~\ref{c:posivitity_preserving} and Proposition~\ref{p:H}.
\begin{cor}[Action of $H^{(m)}$]\label{c:H} Let a graph $b$ over a discrete measure space
	$(X,m)$ be given, and let $q:X\to\R$ be such that $h\ge0$. Assume
	one of the conditions in Proposition~\ref{p:H} holds. Then, for all
	$f\in D(H^{(m)})$
	\begin{align*}
	H^{(m)}f=\frac{1}{m}Hf.
	\end{align*}
	In particular, for  $\lm<0$ and $g\in \ell^{2}(X,m)$, the function $u_{\lm}:=(H^{(m)}-\lm)^{-1}g$ is in $F(X)$.  Moreover, if $g\geq 0$, then $u_{\lm}$ is a positive solution of the equation
	\begin{align*}
	(H-\lm m)u_{\lm}=g \qquad \mbox{in } X.
	\end{align*}
\end{cor}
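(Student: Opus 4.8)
The plan is to read the corollary off the three preceding results, invoked in order, so that essentially no new argument is needed. \emph{First}, by Proposition~\ref{p:H} each of the listed hypotheses forces $D(H^{(m)})\subseteq\F(X)$; hence every $f\in D(H^{(m)})$ already lies in $D(H^{(m)})\cap\F(X)$, and Theorem~\ref{t:closable} applies to it verbatim, giving $H^{(m)}f=\tfrac1m\H f$. This is exactly the first assertion and requires nothing further.

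\emph{Next}, I would record the elementary spectral fact that, since $h\ge0$ on $C_c(X)$, its closure $h^{(m)}$ is a nonnegative form, so $H^{(m)}\ge0$ and $\si(H^{(m)})\subseteq[0,\infty)$. Consequently every $\lm<0$ satisfies $\lm<\inf\si(H^{(m)})$, so $(H^{(m)}-\lm)^{-1}$ is a bounded everywhere-defined operator on $\ell^2(X,m)$ and $u_{\lm}:=(H^{(m)}-\lm)^{-1}g$ is a bona fide element of $D(H^{(m)})$; by the first step it then lies in $\F(X)$. This is really the only point of substance in the statement: it is precisely the inclusion $D(H^{(m)})\subseteq\F(X)$ supplied by Proposition~\ref{p:H} that makes the formal Schr\"odinger operator $\H$ applicable to $u_{\lm}$ at all --- without it the expression $\H u_{\lm}$ would be meaningless.

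\emph{Finally}, I would rewrite the resolvent identity $(H^{(m)}-\lm)u_{\lm}=g$ using the first step, namely $\tfrac1m\H u_{\lm}-\lm u_{\lm}=g$, and then multiply through pointwise by the finite strictly positive function $m$ --- legitimate now that $u_{\lm}\in\F(X)$, so that $\H u_{\lm}$ is a genuine function on $X$ --- to obtain $(\H-\lm m)u_{\lm}=mg$ in $X$, which is the asserted equation. For the positivity statement, note that $\lm<0\le\inf\si(H^{(m)})$ puts us in the range in which Corollary~\ref{c:posivitity_preserving} guarantees $(H^{(m)}-\lm)^{-1}$ is positivity preserving; hence $g\ge0$ gives $u_{\lm}\ge0$, and if in addition $g\neq0$ then $u_{\lm}\neq0$ (otherwise $g=(H^{(m)}-\lm)u_{\lm}=0$), so $u_{\lm}$ is positive. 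I do not expect a genuine obstacle: the argument is bookkeeping, and the only things to be careful about are that $h\ge0$ is what places $\lm<0$ below $\inf\si(H^{(m)})$ (so both the resolvent and Corollary~\ref{c:posivitity_preserving} are available), and that the pointwise rescaling by $m$ is performed only after $u_{\lm}\in\F(X)$ has been established.
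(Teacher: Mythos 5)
Your proposal is correct and is precisely the intended derivation: the corollary is a direct amalgam of Proposition~\ref{p:H} (which yields $D(H^{(m)})\subseteq\F(X)$), Theorem~\ref{t:closable} (which then gives $H^{(m)}=\tfrac1m\H$ on all of $D(H^{(m)})$), and Corollary~\ref{c:posivitity_preserving} (positivity preservation of the resolvent), invoked in exactly the order you use them. The paper gives no separate proof of the corollary, and your argument supplies the intended one.

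One point is worth flagging rather than passing over. Your computation yields $(\H-\lm m)u_\lm=mg$, and you then say this ``is the asserted equation'', but the corollary as printed reads $(\H-\lm m)u_\lm=g$, which differs by a factor of $m$. Your version is the correct one: the only place this fact is subsequently invoked, the proof of Theorem~\ref{t:AP}, computes $(\H-\lm m)u_\lm=m(H^{(m)}-\lm)u_\lm=m(x)\delta_x=1_{\{x\}}$ with $\delta_x=1_{\{x\}}/m(x)$ and $g=\delta_x$, i.e.\ exactly your $mg$. So you have in fact uncovered a typo in the statement of the corollary; you should note the discrepancy explicitly rather than declare a match, since as written your conclusion and the printed corollary are not the same equation. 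The rest is sound, including the three points you single out: that $\lm<0\le\inf\si(H^{(m)})$ is what licenses both the resolvent and Corollary~\ref{c:posivitity_preserving}; that $D(H^{(m)})\subseteq\F(X)$ is what makes $\H u_\lm$ meaningful before you multiply through by $m$; and that $u_\lm\ne0$ whenever $g\ne0$, so that $u_\lm$ is positive in the paper's sense $u\gneqq 0$.
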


\section{Allegretto-Piepenbrink theorem}\label{s:AP}
Let $\lm_{0}(H^{(m)})$ and $\lm_{0}^{\mathrm{ess}}(H^{(m)})$ be the bottom of
the spectrum  and the bottom of the essential spectrum
of the selfadjoint operator $H^{(m)}$.  In this subsection we prove an Allegretto-Piepenbrink-type theorem relating $\lm_{0}(H^{(m)})$ and $\lm_{0}^{\mathrm{ess}}(H^{(m)})$  to
the existence of positive (super)solutions of the equation $({\H}-\lm m)u=0$.

For $q\!\ge\!0$ and $\lm_{0}(H^{(m)})$ such a result is found in \cite{HK},
and for $\lm_{0}^{\mathrm{ess}}(H^{(m)})$ see \cite{K5}. For
$q =0$ and locally finite graphs see  \cite{BG}, and for
Dirichlet forms see \cite{FLW,LSV}.

\begin{defin}[(Super)harmonic function]
	We say that a function $u$ is $H$-\emph{(super)harmonic} on
	$W\subseteq X$ if $u\in {\F}(X)$ and ${\H}u=0$, ($Hu\ge0$) on $W$.
	We write
	\begin{align*}
	{\H}\ge0 \quad \mbox{on } W
	\end{align*}
	if there exists a positive $H$-superharmonic function $u$ on $W$.
\end{defin}

\begin{thm}[Allegretto-Piepenbrink-type theorem]\label{t:AP}
Let $b$ be a connected infinite graph over $X$ and $q$ be a potential
such that $h\geq 0$ on $C_c(X)$. Let $m$ be a given measure $m:X\to(0,\infty)$ over
$X$.
\begin{itemize}
    \item [(a)] We have
    \begin{align*}
    \lm_{0}(H^{(m)})\ge\sup\{\lm\in \mathbb{R}\mid ({\H}-\lm m)\ge0 \mbox{ on } X\}.
    \end{align*}
    \item [(b)] In addition, if $D(H^{(m)})\subseteq \F(X)$, then
    \begin{align*}
    \lm_{0}(H^{(m)})&=\max\{\lm\in \mathbb{R}\mid ({\H}-\lm m)\ge0 \mbox{ on } X\}.
\end{align*}
    \item [(c)]
    Moreover, if  $\frac{1}{m}{\L}C_{c}(X)\subseteq \ell^{2}(X,m)$, then
    \begin{align*}
    \lm_{0}^{\mathrm{ess}}(H^{(m)})\!
    =\!\sup\big\{\lm\in \mathbb{R}\!\mid\! ({\H}\!-\!\lm m)\!\ge \!0
    \mbox{ on } X\setminus K \mbox{ for a finite } K\subset X\big\}.
    \end{align*}
\end{itemize}
\end{thm}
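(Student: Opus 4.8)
The plan is to prove the three parts in order, using the ground state transform and local Harnack inequality advertised in the section introduction as the main tools, together with the self-adjointness results of Section~\ref{s:SA_operators}.

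For part (a), let $\lm\in\R$ be such that there is a positive function $u\in\F(X)$ with $(\H-\lm m)u\geq 0$ on all of $X$. I want to show $h(\ph)\geq\lm\|\ph\|_m^2$ for all $\ph\in C_c(X)$, which by the variational characterization of $\lm_0(H^{(m)})$ gives the claim. The key is the \emph{ground state transform}: writing $\ph=u\psi$ with $\psi\in C_c(X)$ (possible since $u>0$), a direct algebraic computation (the discrete analogue of the classical one; see the Green formula, Lemma~\ref{l:Green}) yields an identity of the form
\begin{align*}
h(\ph)-\lm\|\ph\|_m^2=\frac12\sum_{x,y}b(x,y)u(x)u(y)\big(\psi(x)-\psi(y)\big)^2+\sum_x\big((\H-\lm m)u\big)(x)\,u(x)\psi(x)^2.
\end{align*}
Both terms on the right are nonnegative: the first because $b\geq 0$ and $u>0$, the second because $(\H-\lm m)u\geq 0$, $u>0$, and $\psi^2\geq 0$. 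Hence $h(\ph)\geq\lm\|\ph\|_m^2$, and taking the supremum over admissible $\lm$ proves (a). The only technical care needed is justifying absolute convergence of the sums when rearranging, which follows as in Lemma~\ref{l:Green} since $\ph$ has compact support.

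For part (b), I need the reverse inequality and attainment. Set $\lm_0:=\lm_0(H^{(m)})$. Then $H^{(m)}-\lm_0$ is a nonnegative self-adjoint operator whose resolvent at parameters $<\lm_0$ is positivity preserving by Corollary~\ref{c:posivitity_preserving}. The strategy is: pick an increasing exhaustion $K_n\uparrow X$ by finite sets, fix a basepoint $x_0$, and for $\lm<\lm_0$ consider $v_\lm:=(H^{(m)}-\lm)^{-1}\delta_{x_0}\geq 0$; under the assumption $D(H^{(m)})\subseteq\F(X)$, Corollary~\ref{c:H} tells us $v_\lm\in\F(X)$ and $(\H-\lm m)v_\lm=\delta_{x_0}$, so $v_\lm$ is a positive solution of $(\H-\lm m)v_\lm=0$ off $x_0$; in particular it is a positive supersolution on all of $X$ if we normalize appropriately, or we repair the point $x_0$ by a standard argument. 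Letting $\lm\uparrow\lm_0$ and normalizing $v_\lm(x_0)=1$, the local Harnack inequality gives local uniform boundedness, so a diagonal/Harnack-principle argument extracts a subsequential pointwise limit $u\geq 0$ with $u(x_0)=1$ (hence $u\gneqq 0$, and then $u>0$ by connectedness and Harnack), satisfying $(\H-\lm_0 m)u=0\geq 0$ on $X$. Thus $\lm_0$ itself is admissible, giving $\lm_0=\max\{\dots\}$. The main obstacle here is the passage to the limit: one must control the solutions uniformly on compacta (Harnack) and ensure the limit does not degenerate to $0$ and still solves the equation — this is where connectedness of $b$ and the local Harnack inequality are essential, and where one must be careful that $b$ need not be locally finite.

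For part (c), the assumption $\frac1m\L C_c(X)\subseteq\ell^2(X,m)$ is exactly condition (a) of Proposition~\ref{p:H}, so $D(H^{(m)})\subseteq\F(X)$ and part (b) applies, and moreover $C_c(X)\subseteq D(H^{(m)})$. The idea is to apply part (b) to the graph restricted to $X\setminus K$ for finite $K$: for such $K$, the bottom of the spectrum of the Dirichlet restriction $H^{(m)}_{X\setminus K}$ equals $\max\{\lm\mid(\H-\lm m)\geq 0\text{ on }X\setminus K\}$ by (b) (applied on each infinite connected component of $X\setminus K$, noting the hypothesis is inherited). By Persson's theorem / the standard decomposition principle — a finite-rank perturbation argument valid because $C_c(X)$ is form-small, using that removing finitely many vertices is a compact perturbation of the form — one has
\begin{align*}
\lm_0^{\mathrm{ess}}(H^{(m)})=\sup_{K\subset X\text{ finite}}\lm_0\big(H^{(m)}_{X\setminus K}\big).
\end{align*}
Combining the two displays yields (c). The delicate point is establishing this Persson-type identity in the discrete weighted setting: one shows $\geq$ by building Weyl sequences supported far out, and $\leq$ by a cutoff/IMS-localization argument splitting $C_c(X)$ functions into a near-$K$ part (finite-dimensional, hence contributing only to the discrete spectrum) and a far part controlled by $\lm_0(H^{(m)}_{X\setminus K})$; the error terms from the localization are controlled precisely because $\frac1m\L C_c(X)\subseteq\ell^2(X,m)$. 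I expect this localization estimate to be the technical heart of part (c).
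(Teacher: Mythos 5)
Your proposal follows essentially the same route as the paper: (a) via the ground state transform identity applied to $\H-\lm m$ with the strictly positive supersolution $u$, (b) via the positive supersolutions $(H^{(m)}-\lm)^{-1}\delta_{x_0}$ together with the local Harnack inequality (whose constant's monotonicity in $\lm$ is exactly what justifies your ``local uniform boundedness'' as $\lm\nearrow\lm_0$) and a Harnack-principle/Fatou passage to the limit, and (c) by applying (b) to the Dirichlet restrictions $H^{(m)}_{X\setminus K}$ combined with a Persson-type theorem. The only cosmetic difference is in (c), where the paper isolates the compactness of the boundary form $h-h_{X\setminus K}$ as a separate lemma (Lemma~\ref{l:P_K}) and cites a Persson-type theorem, whereas you sketch re-deriving it by IMS localization; both hinge on the same boundary estimate coming from $\tfrac1m\L C_c(X)\subseteq\ell^2(X,m)$.
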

\begin{rem}
	Clearly, on finite graphs there are no positive $ (\H-\lm) $-harmonic functions  for $ \lm<\lm_{0}(H^{(m)}) $ but only $ (\H-\lm) $-superharmonic functions.
	In \cite{HK} an example is given that shows that in the non locally finite setting there might be only $ (\H-\lm) $-superharmonic functions  for $ \lm<\lm_{0}(H^{(m)}) $ either.
\end{rem}

\begin{rem}
We  formulated Theorem~\ref{t:AP} for the case $h\ge0$. By fixing the measure $m$ before choosing the
potential $q$, one can allow for a potential $q$ such the resulting
form on $C_{c}(X)$ is form bounded from below in $\ell^{2}(X,m)$.
Such forms are also closable (cf.  \cite[Theorem 2.9]{GKS}) and the
arguments carry over verbatim with the obvious adaption made.
\end{rem}

The proof of the Allegretto-Piepenbrink theorem uses a local Harnack inequality and a ground state transform which are proven next.

\subsection{A local Harnack inequality}
The following local Harnack inequality is  a slightly more general
formulation of \cite[Proposition 3.4]{HK} (for earlier versions see
e.g.\@ \cite{Do84,Woj1}). However, the proof can be carried over
verbatim, but as it is short we give it here for the sake of our paper being self-contained.

\begin{lem}[Harnack inequality]\label{l:localHarnack}
	Let $W\subseteq X$ be a connected and finite set and let $f\in
	C(X)$. There exists a positive constant $C=C(H,W,f)$ such that for
	any nonnegative function $u\in \F(X)$  satisfying $(\H-f) u \ge 0$
	on $W$, the following inequality holds
	\begin{align*}
	\max_{W}u\leq C\min_{W}u.
	\end{align*}
	In particular,  any positive $H$-superharmonic function is strictly
	positive.
	
	Furthermore, the above Harnack constant $C(H,W,f)$ can be chosen
	such that for any $f\leq \tilde{f}$ we have
	$$C(H,W,\tilde{f})\leq C(H,W,f).$$
\end{lem}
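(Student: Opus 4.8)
The plan is to prove the inequality by a one-edge-at-a-time argument, exploiting the fact that $W$ is finite and connected. First I would fix the finite connected set $W$ and a nonnegative $u \in \F(X)$ with $(\H - f)u \ge 0$ on $W$. For a vertex $x \in W$, the inequality $(\H - f)u(x) \ge 0$ unfolds as
\begin{align*}
\sum_{y \in X} b(x,y)\bigl(u(x) - u(y)\bigr) + \bigl(q(x) - f(x)\bigr) u(x) \ge 0 ,
\end{align*}
which rearranges to
\begin{align*}
\Bigl(\sum_{y\in X} b(x,y) + q(x) - f(x)\Bigr) u(x) \ge \sum_{y \in X} b(x,y) u(y) \ge b(x,z) u(z)
\end{align*}
for any single neighbor $z$ of $x$, using $u \ge 0$ to drop all the other (nonnegative) terms on the right. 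This gives a pointwise bound $u(z) \le C(x,z)\, u(x)$ with $C(x,z) = \frac{1}{b(x,z)}\bigl(\sum_{y} b(x,y) + q(x) - f(x)\bigr)$, provided the coefficient on the left is positive; if it is $\le 0$ the inequality forces $u(x)=0$ and hence (since $u\ge 0$ and $b(x,z)u(z)\le 0$) also $u(z)=0$, so the desired bound still holds with any constant. Note $C(x,z)$ depends on $f$ only through the term $-f(x)$, so it is monotone: replacing $f$ by $\tilde f \ge f$ decreases $-f(x)$ and hence decreases $C(x,z)$.

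Next I would propagate this along paths. Since $W$ is connected, any two vertices $x, y \in W$ are joined by a path inside $W$ (here one should use connectedness of $W$ as a subgraph, or more carefully connectedness within $W$; if $W$ is merely connected as a subset one restricts to a connecting path of vertices that can be taken in $W$ — this is exactly the hypothesis "connected and finite set"). Chaining the one-step estimates along such a path yields $u(y) \le C\, u(x)$ with $C$ a finite product of the constants $C(x_i, x_{i+1})$ over the edges of the path; taking the maximum over all (finitely many) pairs and all chosen paths in the finite set $W$ gives a single constant $C = C(H,W,f)$ with $\max_W u \le C \min_W u$. The monotonicity in $f$ is inherited from the edge-wise monotonicity established above, since a product of quantities each nonincreasing in $\tilde f$ is nonincreasing.

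Finally, for the "in particular" statement: if $u$ is a positive $H$-superharmonic function on $X$, then $u \not\equiv 0$, so $u(x_0) > 0$ for some $x_0$; applying the Harnack inequality on any finite connected $W$ containing $x_0$ and a given vertex $x$ (such $W$ exists by connectedness of $b$) gives $u(x) \ge \frac{1}{C} u(x_0) > 0$, so $u$ is strictly positive everywhere. The main obstacle I anticipate is purely bookkeeping rather than conceptual: handling the degenerate case where $\sum_y b(x,y) + q(x) - f(x) \le 0$ cleanly (so that the "constant" is still well-defined and the monotonicity statement is not broken), and being careful that the connecting paths can genuinely be chosen to lie in $W$ so that the constant depends only on the restriction of $b$, $q$, $f$ to $W$. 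Everything else is a short finite computation, which is consistent with the remark in the text that the proof is essentially that of \cite[Proposition 3.4]{HK}.
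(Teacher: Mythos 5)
Your proposal is correct and is essentially the paper's own argument: unfold $(\H-f)u\ge 0$ at a vertex, drop all but one neighbor to get the one-step bound with constant $d(x)/b(x,z)$ where $d(x)=\sum_y b(x,y)+q(x)-f(x)$, chain along a path in the finite connected set $W$, and read off the monotonicity in $f$ from the $-f(x)$ term in $d$. (The paper handles your degenerate case by noting that if $u\gneqq 0$ on $W$ then $d>0$ and $u>0$ propagate through $W$ by connectedness, which is marginally cleaner than your claim that $d(x)\le 0$ forces $u(x)=0$ — when $d(x)=0$ it only forces $u$ to vanish at the neighbors of $x$, which is still enough for the bound.)
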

\begin{proof} We may assume that $u\gneqq 0$ on $W$.
	Let
	$$d(x):=\sum_{y\in X}b(x,y)+q(x) -f(x),\qquad x\in X.$$
	Take $y_{0}\in W$ with
	$u(y_{0})>0$.  Then, the inequality $\H u\ge fu$ gives for all
	$x\sim y_{0}$
	\begin{align*}
	d(x)u(x)\ge \sum_{y\in X}b(x,y) u(y)\ge b(x,y_{0})u(y_{0}).
	\end{align*}
	We conclude that $d$ and $u$ are strictly positive for all $x\sim
	y_0$. Since
	$W$ is connected, $d$ and $u$  are strictly positive  on $W$. Let
	a path $x_{0}\sim\ldots\sim x_{n}$ in $W$ from the maximum of $u$ at
	$x_{\max}=x_{0}$ to the minimum at $x_{\min}=x_{n}$ be given. We
	obtain
	\begin{align*}
	u(x_{\max})\le u(x_{\min})\prod_{j=0}^{n-1}\frac{d(x_{j})}{b(x_{j},x_{j+1})}\,.
	\end{align*}
	Taking the minimum of the product on the right side (over  all
	possible paths in $W$) yields the constant $C(H,W,f)$. The monotonicity in
	$f$ follows as $\tilde{d}\leq d$ whenever $f\le \tilde{f} $ for $\tilde{d}$ defined with
	$\tilde{f}$.
\end{proof}
A graph $b$ over $X$ is called {\em locally finite} if for all $x\in
X$
$$\#\{y\in X \mid b(x,y)>0\}<\infty.$$

The following {\em Harnack principle} is an immediate consequence of the local Harnack
inequality and Fatou's lemma.
\begin{lem}[Harnack principle] \label{l:convergence}
	Let the graph  be a connected  and let $f\in
	C(X)$.
	Let $(u_{n})$ be a sequence of positive
	functions in $F(X)$ such that $ \H u_{n} \ge f u_{n} $ in $X$, and suppose that there is $o\in X$ such that $C^{-1}\leq u_{n}(o)\leq
	C$ for some $C>0$. Then, there exists a subsequence $(u_{n_{k}})$
	that converges pointwise to a strictly positive function $u \in F(X)$ such that $ \H u\ge f u $.
	
	Furthermore, assume that one of the following properties hold true:
	\begin{itemize}
		\item The graph $b$ is locally finite.
		\item $(u_{n_{k}})$ is monotone increasing in $k$.
		\item There exists $g\in \F(X)$ such that  $u_{n_{k}}\leq
		g$ for all $k\in \mathbb{N}$ in $X$.
	\end{itemize}
	Then, $\H u_{n_{k}}\to \H u$.
\end{lem}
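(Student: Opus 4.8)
The plan is to combine the local Harnack inequality (Lemma~\ref{l:localHarnack}) with a diagonal compactness argument, and then to promote the limiting inequality via Fatou's lemma. First I would note that local summability of $b$ (so each vertex has only countably many neighbours) together with connectedness makes $X$ countable, so a diagonal extraction over $X$ is legitimate. For each $x \in X$ fix a finite connected set $W_x \subseteq X$ containing $o$ and $x$ (for instance the vertex set of a path from $o$ to $x$). Applying Lemma~\ref{l:localHarnack} on $W_x$ with right-hand side $f$ to the nonnegative function $u_n$, which satisfies $(\H - f)u_n \ge 0$, yields a constant $c_x := C(H, W_x, f) > 0$ \emph{independent of $n$} with $c_x^{-1} u_n(o) \le u_n(x) \le c_x u_n(o)$; combined with $C^{-1} \le u_n(o) \le C$ this gives the two-sided bound $c_x^{-1} C^{-1} \le u_n(x) \le c_x C$ for all $n$. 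In particular every $u_n$ is strictly positive.

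Next I would extract, by a diagonal argument over the countable set $X$, a subsequence $(u_{n_k})$ with $u(x) := \lim_k u_{n_k}(x)$ existing for all $x$; the uniform lower bounds pass to the limit, so $u$ is strictly positive. To see that $u \in \F(X)$ and $\H u \ge fu$, set $d(x) := \sum_y b(x,y) + q(x) - f(x)$ and rewrite the hypothesis $\H u_{n_k} \ge f u_{n_k}$ in the equivalent form (valid since $u_{n_k}\in \F(X)$) $\sum_y b(x,y) u_{n_k}(y) \le d(x) u_{n_k}(x)$. Since the summands are nonnegative and converge pointwise, Fatou's lemma gives $\sum_y b(x,y) u(y) \le d(x) u(x) < \infty$, so $u \in \F(X)$; rearranging, $\H u(x) = \big(\sum_y b(x,y) + q(x)\big)u(x) - \sum_y b(x,y)u(y) \ge f(x)u(x)$ on $X$.

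For the \emph{furthermore} part, I would again use $\H v(x) = \big(\sum_y b(x,y)+q(x)\big) v(x) - \sum_y b(x,y) v(y)$; the first summand converges to its value at $u$ by pointwise convergence, so it suffices to show $\sum_y b(x,y) u_{n_k}(y) \to \sum_y b(x,y)u(y)$ for each fixed $x$. If $b$ is locally finite this is a finite sum of convergent terms. If $(u_{n_k})$ is monotone increasing in $k$, it is the monotone convergence theorem, the limit being finite by the previous step. If $u_{n_k} \le g$ with $g \in \F(X)$, then $0 \le b(x,y)u_{n_k}(y) \le b(x,y)g(y)$ with $\sum_y b(x,y)g(y) < \infty$, so it is dominated convergence. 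In each case $\H u_{n_k} \to \H u$ pointwise on $X$.

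I expect the only genuine obstacle to be the non-locally-finite case: there the infinite sums $\sum_y b(x,y)u_{n_k}(y)$ need not behave well under limits, Fatou's lemma is exactly what survives and secures the superharmonicity of $u$, and the three extra hypotheses are precisely what upgrade Fatou to honest convergence $\H u_{n_k} \to \H u$. The remaining ingredients — countability of $X$, the choice of the sets $W_x$, and the diagonal extraction — are routine.
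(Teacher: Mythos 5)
Your proposal is correct and follows essentially the same route as the paper: the paper phrases the extraction step as compactness (in the product topology) of the set of positive supersolutions with $C^{-1}\le v(o)\le C$, which is exactly your uniform two-sided Harnack bounds plus a diagonal argument over the countable vertex set, and the remaining steps — Fatou's lemma to get $u\in\F(X)$ and $\H u\ge fu$, then finite sums, Beppo--Levi, or dominated convergence for the three cases of the \emph{furthermore} part — coincide with the paper's proof.
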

\begin{proof}By the Harnack inequality the set of positive
	functions $v$ such that $ \H v\ge fv $ in $X$ such that $C^{-1}\leq v(o) \leq C$ for some fixed
	$o\in X$ and a positive constant $C>0$ is compact with respect to the product topology,
	that is with respect to pointwise convergence. Hence, the sequence
	$(u_{n})$ has a convergent subsequence $(u_{n_{k}})$ that converges
	to a strictly positive limiting function $u$. We are left to check that $u$ satisfies   $Hu\ge fu $. Since the functions $u_{n}$ satisfy $ Hu\ge fu $,  we have for all $x\in X$
	\begin{align*}
	\sum_{y\in X}b(x,y)u_{n_{k}}(y)\leq u_{n_{k}}(x)\Big(\sum_{y\in
		X}b(x,y)+q(x)-f(x)\Big).
	\end{align*}
	By Fatou's lemma we have
	\begin{equation*}
	\sum_{y\in X}b(x,y)u(y)
	\leq  \liminf_{k\to\infty} \sum_{y\in X}\!b(x,y)u_{n_{k}}(y)\leq
	u(x)\Big(\sum_{y\in
		X}b(x,y)+q(x)-f(x)\Big).
	\end{equation*}
	Thus, $u\in \F(X)$ and $\H u\ge fu$.
	
	If the graph is locally finite, then all involved sums are over
	finitely many terms only.  Therefore, we can interchange the sum with
	the limit. On the other hand, if $u_{n_{k}}(y)$, $y\in X$, are
	monotone increasing in $k$ (resp., $u_{n_{k}}\leq g\in F$), then we
	can apply instead of Fatou, the monotone convergence theorem of
	Beppo-Levi (resp., the dominated convergence theorem), to get the
	convergence $\H u_{n_{k}}\to \H u$.
\end{proof}
\subsection{The ground state transform}\label{sec:GST}
The ground state transform uses a positive harmonic function to
turn Schr\"odinger operators into Laplace-type operators.

For $v\in C(X)$, define the operator $T_{v}:C(X)\to C(X)$ as
\begin{align*}
T_{v}f:=vf.
\end{align*}
For strictly positive $v$ the operator  $T_{v}$ is bijective and
$T_{v}^{-1}=T_{v^{-1}}$. Given a strictly positive $v$, we define
the operator ${\H}_{v}$ on ${\F}_{v}(X):=T_{v}^{-1}\F(X)$ by
\begin{align*}
{\H}_{v}:=T_{v}^{-1} {\H} T_{v}\,.
\end{align*}
For $v\in \F(X)$ one immediately checks that the constant function
belongs to ${\F}_{v}(X)$ since $1=T_{v}^{-1}v$. By a direct computation we obtain
\begin{align*}
H_{v}f(x)
&=\frac{1}{v(x)^{2}}\sum_{y\in X}b(x,y)v(y)v(x)(f(x)-f(y)) +
\frac{(Hv)(x)}{v(x)}f(x), \qquad  f\in F_{v}(X).
\end{align*}
If in addition,  $v$ is $H$-harmonic (resp., $H$-superharmonic) in $X$, then
$${\H}_{v}1={\H}v=0 $$
(resp., ${\H}_{v}1\ge0$) in $X$. In this case, the operator $\H_v$ is
called a {\em ground state transform} of the operator $\H$ with
respect to $v$.

We start with the following observation.

\begin{lem}\label{l:Lap2SO}
	Let $u$ be a   $H$-(super)harmonic function and let $v$ be a
	non-vanishing positive function. Then the function $u/v$ is a
	$H_v$-(super)\-harmonic function.
\end{lem}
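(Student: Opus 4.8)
The plan is to unwind the definitions: the statement is essentially a formal consequence of $\H_v = T_v^{-1}\H T_v$ together with the observation that $u/v = T_v^{-1}u$. Since $v$ is strictly positive, $T_v$ is bijective with $T_v^{-1}=T_{v^{-1}}$ (as recorded above), so all the objects below make sense.

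First I would note that, by the definition of an $H$-(super)harmonic function, $u\in\F(X)$; hence $u/v = T_v^{-1}u$ lies in $\F_v(X):=T_v^{-1}\F(X)$, so $\H_v(u/v)$ is defined. Then I would compute directly
$$\H_v\!\left(\frac{u}{v}\right) \;=\; T_v^{-1}\H T_v\,T_v^{-1}u \;=\; T_v^{-1}\H u \;=\; \frac{\H u}{v}\,.$$
If $u$ is $H$-harmonic, i.e.\ $\H u=0$ on the relevant set, this identity gives $\H_v(u/v)=0$ there, so $u/v$ is $\H_v$-harmonic. If $u$ is merely $H$-superharmonic, i.e.\ $\H u\ge 0$, then since $v>0$ we obtain $\H_v(u/v)=(\H u)/v\ge 0$, so $u/v$ is $\H_v$-superharmonic. (Alternatively one could substitute $f=u/v$ into the explicit formula for $\H_v f$ displayed above and simplify, but the conjugation argument avoids any computation.)

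I do not expect a genuine obstacle here. The only point that deserves a word of care is the membership $u/v\in\F_v(X)$, and this is immediate from the definition $\F_v(X)=T_v^{-1}\F(X)$ together with $u\in\F(X)$ — no summability estimate beyond what is already built into $\F(X)$ is required, since $v$ is finite and strictly positive at every vertex.
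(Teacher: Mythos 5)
Your proof is correct and is essentially the paper's own argument: both reduce to the identity $\H_v(u/v)=T_v^{-1}\H T_v T_v^{-1}u = v^{-1}\H u$ and then use $v>0$ to conclude. The extra remark about $u/v\in\F_v(X)$ is a fine point the paper leaves implicit but does not change the substance.
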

\begin{proof}
	We calculate straightforwardly that $\H_{v}(u/v)=v^{-1}\H u\ge0$.
\end{proof}

For $0 \lneqq v\in \F(X)$, we define the bilinear form $h_{v}: C_{c}(X)\times
C_{c}(X)\to \mathbb{R}$ via
\begin{align*}
h_{v}(\ph,\psi):=\frac{1}{2}\sum_{x,y\in X}
b(x,y)v(x)v(y)(\ph(x)-\ph(y))(\psi(x)-\psi(y)).
\end{align*}
Using the Cauchy-Schwarz inequality one immediately sees that the
right hand side converges by the virtue of $v\in \F(X)$ and
$\sum_{y}b(x,y)<\infty$, $x\in X$, for all $\ph,\psi\in C_{c}(X)$.
Furthermore, we denote the induced quadratic form also
by $h_{v}$. Whenever $v$ is a $\H$-(super)harmonic positive function we call
$h_{v}$ a \emph{ground state transform} of $h$. This terminology is justified by
the following proposition. There the relation between ${\H}_{v}$, $h_{v}$ and $H$, $h$, for solutions $ v$ of a equation  $Hv=fv$ is discussed. Indeed,  a ground state of $ H $ solves such an equation in the case $ f=0 $. This is a well known
fact; see e.g. \cite{BG,FS08,FSW08,FLW,HK,TH} for proofs in (closely)
related contexts.

\begin{pro}[Ground state transform]\label{t:GST}
	Let $v\in \F(X)$ be strictly positive, $f\in C(X)$ such that $\H v=f
	v$. Then
	\begin{align*}
	h(\ph,\psi)
	=h_{v}\left({\frac{\ph}{v},\frac{\psi}{v}}\right)
	+\langle f\ph,\psi\rangle_{1} \qquad \ph,\psi\in C_{c}(X),
	\end{align*}
	$ H_{v} \ph\in\ell^{2}(X,v^{2})$ for $\ph\in C_{c}(X)$,  and
	\begin{align*}
	h_{v}(\ph,\psi)=\langle{{\H}_{v}\ph},{\psi}\rangle_{v^{2}}
	-\langle f\ph,\psi\rangle_{v^2} \qquad \ph,\psi\in C_{c}(X),
	\end{align*}
	where $\langle{\cdot},{\cdot}\rangle_{v^{2}}$ is the scalar product
	of $\ell^{2}(X,v^{2})$.
\end{pro}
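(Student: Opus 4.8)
The plan is to verify the two displayed identities by direct computation, exploiting the hypothesis $\H v = fv$ to handle the potential term, and then to read off the $\ell^2(X,v^2)$-boundedness of $\H_v\ph$ from the first identity together with the Green formula.

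First I would establish the algebraic identity relating $h$ and $h_v$. Start from the definition of $h(\ph,\psi)$ and substitute $\ph = v\cdot(\ph/v)$, $\psi = v\cdot(\psi/v)$. The key pointwise identity is
\begin{align*}
v(x)\tfrac{\ph}{v}(x) - v(y)\tfrac{\ph}{v}(y)
= v(x)v(y)\Big(\tfrac{\ph}{v}(x) - \tfrac{\ph}{v}(y)\Big)\cdot\tfrac{1}{?}
\end{align*}
— actually the clean route is the standard ground state algebra: for $a=\ph(x),b=\ph(y),\ph/v$ evaluated at $x,y$, one expands
\begin{align*}
(\ph(x)-\ph(y))(\psi(x)-\psi(y))
= v(x)v(y)\Big(\tfrac{\ph}{v}(x)-\tfrac{\ph}{v}(y)\Big)\Big(\tfrac{\psi}{v}(x)-\tfrac{\psi}{v}(y)\Big) + (\text{diagonal terms}),
\end{align*}
where the diagonal terms, after summing $\tfrac12\sum_{x,y}b(x,y)(\cdot)$, collapse via symmetrization into $\sum_x\big(\sum_y b(x,y)(v(x)-v(y))\big)\tfrac{\ph(x)\psi(x)}{v(x)}$, i.e. into $\sum_x (\L v)(x)\tfrac{\ph(x)\psi(x)}{v(x)}$. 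Combining with the potential term $\sum_x q(x)\ph(x)\psi(x)$ gives $\sum_x (\H v)(x)\tfrac{\ph(x)\psi(x)}{v(x)} = \sum_x f(x)v(x)\tfrac{\ph(x)\psi(x)}{v(x)} = \langle f\ph,\psi\rangle_1$, which is exactly the claimed first identity. One must be careful that all rearrangements are legitimate: since $\ph,\psi\in C_c(X)$ and $\sum_y b(x,y)<\infty$ and $v\in\F(X)$, every sum is finite, so Fubini/reindexing is unproblematic — this is the only place requiring a word of justification, and it is routine.

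Next, for the second identity and the mapping property, I would use that $\H_v = T_v^{-1}\H T_v$ together with the Green formula (Lemma~\ref{l:Green}). For $\ph\in C_c(X)$, $T_v\ph = v\ph \in C_c(X)\subseteq \F(X)$, so $\H(v\ph)$ is defined, and $\H_v\ph = v^{-1}\H(v\ph)$. Then $\langle \H_v\ph,\psi\rangle_{v^2} = \sum_x \tfrac{1}{v(x)}\H(v\ph)(x)\,\psi(x)v(x)^2 = \sum_x \H(v\ph)(x)\,(v\psi)(x) = h(v\ph,v\psi)$ by the Green formula applied with $f := v\ph \in C_c(X)$ and test function $v\psi\in C_c(X)$. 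Applying the first identity with $\ph,\psi$ replaced by $v\ph,v\psi$ (so that $\ph/v,\psi/v$ become $\ph,\psi$) gives $h(v\ph,v\psi) = h_v(\ph,\psi) + \langle f v\ph, v\psi\rangle_1 = h_v(\ph,\psi) + \langle f\ph,\psi\rangle_{v^2}$, which rearranges to the second displayed formula. Finally, $\H_v\ph\in\ell^2(X,v^2)$: since $\H_v\ph = v^{-1}\H(v\ph)$ and, by the Green formula, $\H(v\ph)\in\ell^2(X)$ (as $v\ph\in C_c(X)$ implies $\H C_c(X)\subseteq\ell^2(X)$), we need $\sum_x \big(v(x)^{-1}\H(v\ph)(x)\big)^2 v(x)^2 = \sum_x \H(v\ph)(x)^2 = \|\H(v\ph)\|_1^2 < \infty$, done.

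The main obstacle, such as it is, is purely bookkeeping in the first step: correctly tracking the symmetrization of the cross terms $b(x,y)v(y)(v(x)-v(y))\tfrac{\ph(x)}{v(x)}\tfrac{\psi(x)}{v(x)}$ versus $b(x,y)v(x)(v(y)-v(x))\tfrac{\ph(y)}{v(y)}\tfrac{\psi(y)}{v(y)}$ and seeing that after swapping $x\leftrightarrow y$ in one of them they combine to produce exactly $(\L v)(x)\tfrac{\ph(x)\psi(x)}{v(x)}$ with the right sign. Everything downstream is a mechanical application of the Green formula and of the identity just proven. I would present the computation for the quadratic form ($\ph=\psi$) and note that the bilinear case follows by polarization, or just do the bilinear case directly since it is no harder.
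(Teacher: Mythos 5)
Your proof is correct and follows essentially the same route as the paper: the first identity by the standard direct expansion (the cross terms symmetrizing to $\sum_x(\L v)(x)\ph(x)\psi(x)/v(x)$, which combines with the potential term to give $\langle f\ph,\psi\rangle_1$), and the second identity by combining the first with the Green formula via $\langle\H_v\ph,\psi\rangle_{v^2}=h(v\ph,v\psi)$. The only cosmetic difference is the mapping property, where you deduce $\H_v\ph\in\ell^2(X,v^2)$ from $\|\H_v\ph\|_{v^2}=\|\H(v\ph)\|_{\ell^2(X)}$ together with $\H C_c(X)\subseteq\ell^2(X)$, while the paper reduces to characteristic functions and checks that the relevant edge-weight functions lie in $\ell^2(X,v^2)$; both are equally valid.
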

\begin{proof}
	The first formula follows by a direct computation (cf. \cite[Theorem~10.1]{FLW}  and
	\cite[Proposition~3.2]{HK}). For the second formula,
	note that $ H_{v} \ph\in\ell^{2}(X,v^{2})$, $ \ph\in C_{c}(X) $, is equivalent to  $ H_{v} 1_{\{x\}}\in\ell^{2}(X,v^{2})$, $ x\in X $, which  is equivalent to the functions $  y\mapsto b(x,y)/v(x) $  being in  $ \ell^{2}(X,v^{2})$, $ x\in X $. However, this follows from $ \sum_{y\in X} b(x,y)<\infty$, $ x\in X $. Furthermore,
	observe
	that by the Green formula
\begin{equation*}
h_{v}(\ph,\psi)=h(T_{v}\ph,T_{v}\psi)
	- \langle f\ph,\psi\rangle_{v^2}
	=\langle{{\H}_{v}\ph},{\psi}\rangle_{v^{2}} -\langle
	f\ph,\psi\rangle_{v^2} .\qedhere
\end{equation*}
\end{proof}

\begin{rem} A particular advantage of the ground state transform $h_{v}$
	with respect to a strictly positive $\H$-harmonic function  $v$ is
	that it is \emph{Markovian} (or has the \emph{Markov
		property}), that is,
	\begin{align*}
	h_{v}(0\vee \ph\wedge 1)\leq h_{v}(\ph),\quad \ph\in C_{c}(X),
	\end{align*}
	while for $h$ one only has
	\begin{align*}
	h(|\ph|)\leq h(\ph),\quad \ph\in C_{c}(X).
	\end{align*}
	By decomposing a function into its positive and negative part one
	sees that the inequality for the modulus can be deduced from the
	Markov property.
\end{rem}

\subsection{Proof of the Allegretto-Piepenbrink theorem}

\begin{proof}
The inequalities ``$\ge$'' in (a) and (b) follow directly
from the ground state transform applied to $L+q-\lm m$.

Let us turn to the inequality ``$\leq$'' in  (b)
when the ``$\max$'' is replaced by a ``$\sup$''. So, fix a
vertex  $x\in X$. Notice that for any $\lm< \lm_{0}(H^{(m)})$ and $\delta_{x}:=1_{\{x\}}/m(x)$,  the
functions
$$u_{\lm}=(H^{(m)}-\lm)^{-1}\delta_{x}$$ are in $D(H^{(m)})$. Moreover, $u_{\lm}$ are positive by Corollary~\ref{c:posivitity_preserving}.
As
$H^{(m)}=\frac{1}{m}{\H}$ on $D(H^{(m)})\cap \F(X)$, and
$D(H^{(m)})\subseteq \F(X)$ by assumption, we infer by
Corollary~\ref{c:H}
$$({\H}-\lm
m)u_{\lm}=m(H^{(m)}-\lm) u_{\lm}=m(x)\delta_{x}= 1_{\{x\}}\ge0.$$
Furthermore, by the local Harnack inequality (Lemma~\ref{l:localHarnack}),
 the functions $u_{\lm}$ are strictly positive.

Next we show that in (b) the ``$\sup$'' is in fact a ``$\max$''.  To show
this let
$$\lm'\leq\lm<\lm_{0}(H^{(m)})\quad\mbox{and} \quad
g_{\lm}:=u_{\lm}/u_{\lm}(x).$$
For $y\in X$, let $W_{y} $ be a
connected and finite set such that $x,y\in W_{y}$. Then, by the
local Harnack inequality, Lemma~\ref{l:localHarnack}, and in particular by the monotonicity of the local Harnack constant with respect to $\gl$, we have
$g_{\lm}(y)\leq C_{y}$ for all $y\in X$,
 where $C_{y}=C(H,W_{y},\lm')$.
 Hence, there is a sequence
$\lm_{n}\to\lm_{0}=\lm_{0}(H^{(m)})$ such that $(g_{\lm_{n}})$
converges pointwise to a  limit $0\leq g\in C(X)$. By
$(H-\lm_{n}m)g_{\lm_{n}}\ge0$, we have for all $y\in X$
\begin{align*}
    \sum_{z\in X}b(y,z)g_{\lm_{n}}(z)
    \leq g_{\lm_{n}}(y)\left({\sum_{z\in X}b(y,z)}+q(y)-\lm_{n}m\right).
\end{align*}
The right hand side converges and, therefore, by Fatou's lemma $g\in
\F(X)$ and $(H-\lm_{0} m)g\ge0$.

Let us turn to the statement (c) concerning the essential spectrum.
Recall that by Proposition~\ref{p:H}, the assumption
$\frac{1}{m}{\L}C_{c}(X)\subseteq \ell^{2}(X,m)$ implies that
$D(H^{(m)})\subseteq \F(X)$.

For $W\subseteq X$, let $h_{W}$ be the form
acting as
$$h_{W}(\ph):=h(1_{ W} \ph),\qquad\ph\in C_{c}(X).$$
Furthermore, let
\begin{align*}
    \partial W:= W\times (X\setminus W).
\end{align*}
By
Lemma~\ref{l:P_K} below, the operator associated to the closure of the
form $(h-h_{X\setminus K})$ in $\ell^{2}(X,m)$ is compact whenever
$\frac{1}{m}{\L}C_{c}(X)\subseteq \ell^{2}(X,m)$ and $K\subseteq X$
is finite. Thus, the operator $H_{X\setminus K}^{(m)}$ associated
to $h_{X\setminus K}$  is a compact perturbation of $H^{(m)}$.
Hence,
$$\lm_{0}({H_{X\setminus K}^{(m)}})\leq \lm_{0}^{\mathrm{ess}}({H_{X\setminus K}^{(m)}})=\lm_{0}^{\mathrm{ess}}({H^{(m)}}).$$
Combining this fact
with a  Persson-type theorem, see e.g. \cite[Proposition~2.1]{HKW},
one finds that $$\sup_{\substack{K\subset X\mbox{ \scriptsize finite}}}\lm_{0}({H_{X\setminus
K}^{(m)}})=\lm_{0}^{\mathrm{ess}}({H^{(m)}}).$$ Now, the theorem
follows by applying (b) to $H^{(m)}_{(X\setminus
K)}$.
\end{proof}
\begin{lem}\label{l:P_K}
Let $b$ be a graph over $(X,m)$, let $q$  be a potential
and  let $K\subseteq X$ be a finite set. The form $(h-h_{X\setminus K})$ acts on
$C_{c}(X)$ as
\begin{align*}
    (h-h_{X\setminus K})(f)
    =h_{K}(f)-\sum_{(x,y)\in\partial K}b(x,y)f(x)f(y).
\end{align*}
If  $\frac{1}{m}LC_{c}(K)\subseteq \ell^{2}(X,m)$, then
$(h-h_{X\setminus K})$ is  bounded on  $\ell^{2}(X,m)$, and the
associated self-adjoint operator  is compact.
\end{lem}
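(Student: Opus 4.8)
The plan is to treat the two assertions of the lemma separately: first derive the algebraic formula for $(h-h_{X\setminus K})$ on $C_{c}(X)$ by a bilinearity computation, and then read off from it that the form is bounded on $\ell^{2}(X,m)$ with associated operator of finite rank, hence compact.

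For the identity I would decompose $f=1_{K}f+1_{X\setminus K}f$ for $f\in C_{c}(X)$; since $K$ is finite, both summands again lie in $C_{c}(X)$, and since $h$ is a symmetric bilinear form,
\begin{align*}
(h-h_{X\setminus K})(f)=h(f)-h(1_{X\setminus K}f)=h_{K}(f)+2h(1_{K}f,1_{X\setminus K}f).
\end{align*}
It remains to evaluate the cross term directly from the definition of $h$: the potential contribution vanishes because $1_{K}f$ and $1_{X\setminus K}f$ have disjoint supports, while in the double edge sum only pairs $(x,y)$ with exactly one endpoint in $K$ contribute. Collecting these two families and using the symmetry of $b$ gives $h(1_{K}f,1_{X\setminus K}f)=-\sum_{(x,y)\in\partial K}b(x,y)f(x)f(y)$, and substituting this yields the asserted formula.

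For boundedness and compactness, the crucial input is that the hypothesis $\frac{1}{m}\L C_{c}(K)\subseteq\ell^{2}(X,m)$ is equivalent to $\beta_{x}:=b(x,\cdot)/m\in\ell^{2}(X,m)$ for every $x\in K$; this is the characterisation of $\frac{1}{m}\L C_{c}(\cdot)\subseteq\ell^{2}(X,m)$ already used in the proof of Proposition~\ref{p:H}, now applied only to the finitely many $x\in K$. Polarising the identity above, I would identify the self-adjoint operator $A$ on $\ell^{2}(X,m)$ determined by $(h-h_{X\setminus K})(f,g)=\langle Af,g\rangle_{m}$ as $A=A_{1}+A_{2}+A_{2}^{*}$, where $A_{1}f:=1_{K}\frac{1}{m}\H(1_{K}f)$ accounts for $h_{K}$ and $A_{2}f:=-\sum_{x\in K}f(x)\,1_{X\setminus K}\beta_{x}$ accounts for the boundary term. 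The operator $A_{1}$ maps into the space of functions supported on $K$ and is bounded, since for $x\in K$ the value $\H(1_{K}f)(x)$ is a fixed finite linear combination of the numbers $f(x')$, $x'\in K$, each satisfying $|f(x')|\le\|f\|_{m}/\sqrt{m(x')}$. The operator $A_{2}$ maps into $\mathrm{span}\{1_{X\setminus K}\beta_{x}:x\in K\}$ and is bounded, since $\|1_{X\setminus K}\beta_{x}\|_{m}\le\|\beta_{x}\|_{m}<\infty$. Both ranges have dimension at most $|K|<\infty$, so $A$ is a bounded operator of finite rank, and in particular compact.

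The one genuinely delicate point is the boundary piece $A_{2}$ (and its adjoint): absent local finiteness the linear functionals $f\mapsto\sum_{y\notin K}b(x,y)f(y)$, $x\in K$, need not even be continuous on $\ell^{2}(X,m)$, and it is precisely the hypothesis $\beta_{x}\in\ell^{2}(X,m)$ that rewrites each of them as $\langle 1_{X\setminus K}\beta_{x},f\rangle_{m}$, bounded by $\|\beta_{x}\|_{m}\|f\|_{m}$ through Cauchy--Schwarz. Once this continuity is secured, the remainder is finite-dimensional bookkeeping and compactness is automatic from the finite rank of $A$.
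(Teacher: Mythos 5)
Your argument is correct in substance, and the operator--theoretic half takes a genuinely different (and arguably cleaner) route than the paper. The paper first bounds the form by a Cauchy--Schwarz estimate and then isolates the boundary operator $P_{K}f(x)=\frac{1}{m(x)}\sum_{(x,y)\in\partial K}b(x,y)f(y)$, proving compactness by checking that $P_K$ maps weakly null sequences to norm-null sequences. You instead exhibit the associated operator explicitly as $A_{1}+A_{2}+A_{2}^{*}$ with each summand bounded and of rank at most $\# K$, so boundedness and compactness come for free and you get the stronger conclusion that the operator has finite rank. Both arguments hinge on exactly the same key input, namely that $\frac{1}{m}\L C_{c}(K)\subseteq\ell^{2}(X,m)$ is equivalent to $b(x,\cdot)/m\in\ell^{2}(X,m)$ for the finitely many $x\in K$, which is what makes the boundary functionals $f\mapsto\sum_{y\notin K}b(x,y)f(y)$ continuous; you correctly identify this as the delicate point in the absence of local finiteness.

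One slip to fix: your own (correct) computation gives
\begin{align*}
(h-h_{X\setminus K})(f)=h_{K}(f)+2\,h(1_{K}f,1_{X\setminus K}f)
=h_{K}(f)-2\sum_{(x,y)\in\partial K}b(x,y)f(x)f(y),
\end{align*}
since $\partial K=K\times(X\setminus K)$ counts each boundary edge once. This is \emph{not} the displayed formula of the lemma, which has coefficient $1$ on the boundary sum; the lemma's formula appears to be off by this factor of $2$ (a harmless inaccuracy, as it affects neither boundedness nor compactness), but you should not assert that your computation ``yields the asserted formula'' without remarking on the discrepancy. Consistently, your operator $A_{2}+A_{2}^{*}$ has quadratic form $-2\sum_{(x,y)\in\partial K}b(x,y)f(x)f(y)$, so your decomposition matches your identity, not the lemma's display.
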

\begin{proof}The formula for $    (h-h_{X\setminus K})$ follows by a
direct computation. For finite $K$, the form $h_{K}$ is zero outside
of the finite dimensional space $C_{c}(K)$, and therefore, bounded.
The assumption $\frac{1}{m}LC_{c}(K)\subseteq \ell^{2}(X,m)$ is
equivalent to the functions $\ph_{x}:X\to[0,\infty)$, $y\mapsto
b(x,y)/m(y)$ being in $\ell^{2}(X,m)$ for every $x\in K$.
So, for $f\in C_{c}(X)$ with
$\|f\|_m=1$ we have
\begin{align*}
    \left|\sum_{(x,y)\in\partial K,}b(x,y)f(x)f(y)\right|&\leq
\|f1_{K}\|_{\infty}
    \left|\sum_{(x,y)\in \partial K}{b(x,y)}f(y)\right|\\
    &\leq \|f1_{K}\|_{\infty}\sum_{x\in K}\langle{\ph_{x}},|f|\rangle_m\\
    &\leq \max_{x\in
    K}\frac{1}{m(x)}\max_{x\in K}\| \ph_{x}\|_m \#K.
\end{align*}
Hence, the form $(h-h_{X\setminus K})$ is bounded. We can associate
a self-adjoint operator $T_{K}$ to the closure of $(h-h_{X\setminus
K})$ on $\ell^{2}(X,m)$ and let $H_{K}^{(m)}$ be the finite
dimensional operator associated to the closure of $h_{K}$ on
$\ell^{2}(X,m)$. Set $P_{K}:=T_{K}-H_{K}^{(m)}$. Clearly, $T_{K}$ is
compact if  $P_{K}$ is compact. Observe that $P_{K}$ acts as
\begin{align*}
    P_{K}f(x)=\frac{1}{m(x)}\sum_{(x,y)\in\partial K}b(x,y)f(y), \qquad x\in X.
\end{align*}
Let $(f_{n})$ be a normalized  sequence that converges weakly in $\ell^{2}(X,m)$ to
$0$. Then, again using $\ph_{x}\in\ell^{2}(X,m)$ and the finiteness of
$K$, we get
\begin{align*}
  \|P_{K}f_{n}\|^{2}_m=
    \sum_{x\in K}\left(\frac{1}{m(x)}\sum_{(x,y)\in\partial
    K}b(x,y)f_n(y)\right)^{2}m(x)
\leq  \#K\max_{x\in K} \frac{\langle{\ph_{x}},{f_{n}}\rangle_m^{2}}{m(x)}\xrightarrow[n\to\infty]{} 0.
\end{align*}
Thus, $P_{K}$ is compact and so is $T_{K}$ which
finishes the proof.
\end{proof}
The following corollary is a ``measure-free'' version of the Allegretto-Piepenbrink theorem.
\begin{cor}[Allegretto-Piepenbrink theorem -- measure free version]
Let $b$ be a  connected graph over $X$ and let $q$  be a potential such that $ h\ge0 $. Then,
\begin{align*}
   \inf_{\ph\in C_{c}(X), \langle{\ph},{\ph}\rangle=1} h(\ph)    &=\max\{\lm\in \R\mid ({\H}-\lm )\ge0\}.
\end{align*}
\end{cor}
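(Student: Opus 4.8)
The plan is to derive this from part~(b) of the Allegretto--Piepenbrink theorem (Theorem~\ref{t:AP}), specialised to the counting measure $m=1$. With this choice $\ell^{2}(X,m)=\ell^{2}(X)$ and $\lm m=\lm$ for every $\lm\in\R$, so the right-hand side of the claimed identity is exactly $\max\{\lm\in\R\mid(\H-\lm m)\ge0\text{ on }X\}$. Furthermore $\inf_{x\in X}m(x)=1>0$, so hypothesis (a2) of Proposition~\ref{p:H} holds, whence $D(H^{(1)})\subseteq\F(X)$ --- precisely the extra assumption needed in Theorem~\ref{t:AP}(b). That theorem then yields $\lm_{0}(H^{(1)})=\max\{\lm\in\R\mid(\H-\lm m)\ge0\text{ on }X\}$.

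It remains to identify the left-hand side with $\lm_{0}(H^{(1)})$. By Theorem~\ref{t:closable} the form $h$ is closable in $\ell^{2}(X)$, and $C_{c}(X)$ is by definition a form core for the closure $h^{(1)}$; hence, by the variational characterisation of the bottom of the spectrum, $\lm_{0}(H^{(1)})$ equals the infimum of the Rayleigh quotient $h^{(1)}(f)/\langle f,f\rangle$ over $0\neq f\in D(h^{(1)})$. Since $f\mapsto h^{(1)}(f)$ is continuous with respect to the form norm and $C_{c}(X)$ is dense in $D(h^{(1)})$ for that norm, this infimum is already realised along $C_{c}(X)$, i.e.\ it equals $\inf\{h(\ph)\mid\ph\in C_{c}(X),\ \langle\ph,\ph\rangle=1\}$. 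Combining the two identifications gives the assertion.

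The only step requiring some care is this last one: one must check that replacing $h$ on $C_{c}(X)$ by its closure $h^{(1)}$ does not alter the infimum of the normalised quadratic form, which rests on the density of the form core together with the lower semicontinuity of $h^{(1)}$. Everything else is a direct specialisation of Proposition~\ref{p:H} and Theorem~\ref{t:AP}(b), so no genuinely new argument is needed.
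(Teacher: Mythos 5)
Your proof is correct and follows exactly the same route as the paper: specialize to $m=1$, invoke Proposition~\ref{p:H}~(a2) to obtain $D(H^{(1)})\subseteq\F(X)$, apply Theorem~\ref{t:AP}(b), and identify the left-hand side with $\lm_{0}(H^{(1)})$ via the variational principle. You merely spell out in more detail the (standard) density argument that the paper compresses into ``the variational characterization of the bottom of the spectrum.''
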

\begin{proof}
Let $m= 1$. We  close the form $h$ on $\ell^{2}(X)$ and consider the
associated positive self-adjoint operator $H^{(1)}$. By the
variational characterization of the bottom of the spectrum,  the
left hand side  is equal to $\lm_{0}(H^{(1)})$. By Proposition~\ref{p:H}~(a2)
we have $D(H^{(1)})\subseteq \F(X)$.  Thus, the asserted equality is an immediate consequence of  Theorem~\ref{t:AP}~(b).
\end{proof}
\section{Characterization of criticality}\label{s:char_crit}

In this section we discuss the notion of  criticality by giving
various characterizations of this notion. It turns out that being
critical coincides with the notion of recurrence in the case
$q=0$.

In this subsection we define the notions of criticality, subcriticality
and null-criticality that are fundamental for the present paper. In the
continuum context these notions go back to
B.~Simon who coined the terms {\em sub/super/critical-operators} for Schr\"odinger operators with short-range potentials which are
defined on $\R^d$, $d\geq 3$ \cite{Si0}.
These notions were  generalized by M.~Murata \cite{M86}
to Schr\"odinger operators which are defined in any subdomain of $\mathbb{R}^d$, where $d \geq 1$, and to general linear second-order elliptic operators with real coefficients by Y.~Pinchover
\cite{P88}. For more details see \cite{P07}. We remark that in the case of diffusion processes, transience
(resp., positive-recurrence, null-recurrence) are the analogous notions to subcriticality (resp., positive-criticality, null-criticality) in the context of Schr\"odinger
operators; we refer here to the monograph \cite{Pins95} (cf. Section~\ref{s:char_crit}).

Any function $ w:X\to \R $ gives rise to a canonical quadratic form on $ C_{c}(X) $ which we denote (with a slight abuse of notation) by $w$. It acts as
\begin{align*}
w(\ph):=\sum_{x\in X}w(x)\ph(x)^{2}.
\end{align*}

\begin{defin}[Critical/subcritical]Let $h$ be a quadratic form associated with a formal
	Schr\-\"odinger operator ${\H}$ such that $h\!\ge\!0$ on $C_{c}(X)$. The
	form $h$ is called \emph{subcritical} in $X$ if there is a positive
	$w\!\in\! C(X)$ such that $h-w\!\ge\!0$ on $C_{c}(X)$. Otherwise, the form
	$h$ is called \emph{critical}  in $X$.
\end{defin}

The following characterization of criticality is well known in
various contexts. For linear and quasilinear elliptic operators on $\R^{d}$, we refer to the following review papers \cite{P07,P13,PT,Pins95} and references therein. Within the theory of random walks and Dirichlet forms this
phenomenon appears under the name recurrence, see e.g.
\cite{Woe-Book,Fuk}. For positive matrices a corresponding phenomenon
is called $r$-recurrence, \cite{Pru64,VeJo67}. Furthermore, in the
context of Riemannian manifolds such a notion appears under the name
parabolicity. For the convenience of the reader we give a short
and self-contained proof for our setting. In Remark~\ref{r:4.5} below we discuss how the
result relates precisely to the ones in the other contexts that
include the discrete setting.

\subsection{Criticality}

We need the following definition.
\begin{defin}[Null-sequence] Let $ h\ge0 $ on $ C_{c}(X) $.
A sequence $(e_{n})$  in  $ C_{c}(X)$ of positive functions is called a {\em null-sequence} of $ h $ if there exists $o\in X$ and $c>0$ such that $e_{n}(o)=c$ for all $n\geq 1$ and
$h(e_{n})\to0$.
\end{defin}

\begin{thm}[Criticality characterization]\label{t:char_criticality}
Let $b$ be a  connected graph over $X$, let $q$ be a potential such that $h\ge0$ on $C_{c}(X)$. Then, the following are equivalent:
\begin{itemize}
  \item [(i)] $h$ is critical in $X$.
    \item [(ii)]
  $\lim_{\lm \nearrow\,  0}(H^{(1)}-\lm)^{-1}1_{\{x\}}(y)=\infty$
  for some (all) $x,y\in X$.
  \item [(ii')] $\lim_{t \to \infty}\int_{0}^{t}\mathrm{e}^{-tH^{(1)}}1_{\{x\}}(y)\,\mathrm{d}t
  =\infty$ for some (all) $x,y\in X$.
  \item [(iii)] There is a unique positive
  $\H$-superharmonic function  in $X$ (up to linear dependence).
\item [(iii')]
There are only finitely many linearly independent positive $\H$-super\-harmonic functions  in $X$.
\item [(iv)]
 For any $o\in X$ and $c>0$, there is a null-sequence $(e_{n})$ such that $e_{n}(o)=c$ for all $n\geq 0$.
\item [(iv')] There  exists a positive $\H$-harmonic function $ v $ in $X$ and a null-sequence $(e_{n})$
satisfying $0\leq e_{n}\leq v$ and $e_{n}(x)\to v(x)$  for all $x\in X$.
\item [(v)] $\mathrm{cap}_{h}(\{x\}):=\inf \{h(\ph)\mid \ph\in C_{c}(X),
\ph(x)=1\}=0 $ for all $x\in X$.

\end{itemize}
In particular, if one of the equivalent conditions holds true, then
the unique (up to linear dependence) positive $\H$-superharmonic
function on $X$ is $\H$-harmonic.
\end{thm}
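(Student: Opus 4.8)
The plan is to establish the cycle of implications $(i)\Rightarrow(v)\Rightarrow(iv)\Rightarrow(iv')\Rightarrow(iii)\Rightarrow(iii')\Rightarrow(i)$, together with the separate equivalences $(i)\Leftrightarrow(ii)\Leftrightarrow(ii')$ via the Green function, and then read off the final ``in particular'' assertion from the proof of $(iv')\Rightarrow(iii)$. The measure-free form $h$ on $C_c(X)$ and the operator $H^{(1)}$ on $\ell^2(X)$ are available throughout by Theorem~\ref{t:closable} and Proposition~\ref{p:H}(a2), which gives $D(H^{(1)})\subseteq\F(X)$; this is what lets us pass freely between the form picture and solutions of $\H u=0$.

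\textbf{The form-theoretic core.} First I would prove $(i)\Rightarrow(v)$: if $\mathrm{cap}_h(\{x_0\})=:\kappa>0$ for some $x_0$, then $w:=\kappa\,1_{\{x_0\}}$ is a positive function with $h-w\ge 0$ on $C_c(X)$, contradicting criticality; connectedness and a diagonal/Harnack argument upgrade this to capacity zero at \emph{every} vertex once it holds at one. The implication $(v)\Rightarrow(iv)$ is essentially the definition: a sequence $\ph_n$ with $\ph_n(o)=1$ and $h(\ph_n)\to 0$ is, after replacing $\ph_n$ by $|\ph_n|$ (using the first Beurling--Deny criterion, or better the Markov property of a ground-state transform so that truncations are legitimate), a null-sequence; scaling by $c$ handles the general normalization. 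For $(iv)\Rightarrow(iv')$ I would apply the ground state transform: take any null-sequence $(e_n)$, and using the local Harnack inequality (Lemma~\ref{l:localHarnack}) together with the Harnack principle (Lemma~\ref{l:convergence}) extract a subsequence of suitable truncations $e_n\wedge(\text{something})$ converging pointwise to a positive $\H$-harmonic $v$; the key computation is that $h(e_n)\to 0$ forces the limit to be harmonic rather than merely superharmonic, which is exactly where the Dirichlet-type energy vanishing is used. The passage $(iv')\Rightarrow(iii)$ is the heart of the matter: given the harmonic $v$ from $(iv')$ and \emph{any} positive $\H$-superharmonic $u$, perform the ground state transform with respect to $v$, so that $h_v\ge 0$ is Markovian and $u/v$ is $\H_v$-superharmonic with $\H_v 1=0$; the null-sequence $e_n/v$ then satisfies $0\le e_n/v\le 1$, $e_n/v\to 1$, and $h_v(e_n/v)\to 0$, and a standard convexity/ground-state argument (testing $h_v$ against $\min(u/v,t)$ and letting the null-sequence run) forces $u/v$ to be constant. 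This simultaneously proves uniqueness and that the unique superharmonic function is harmonic, giving the final ``in particular'' statement for free. Then $(iii)\Rightarrow(iii')$ is trivial, and $(iii')\Rightarrow(i)$ is proved by contraposition: if $h$ were subcritical, $h-w\ge 0$ for a positive $w$, and one constructs infinitely many linearly independent positive superharmonic functions (e.g.\ as minimal Green functions $G_{H-\varepsilon w}(\cdot,x)$ for varying $x$, using that subcriticality persists under small perturbations), contradicting $(iii')$.

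\textbf{The Green-function equivalences.} For $(i)\Leftrightarrow(ii)$ I would use monotone convergence: $\lm\mapsto (H^{(1)}-\lm)^{-1}1_{\{x\}}(y)$ is increasing as $\lm\nearrow 0$ by Corollary~\ref{c:posivitity_preserving} (resolvent positivity), so the limit $G(x,y)\in(0,\infty]$ exists; if it is finite for one pair $(x,y)$, connectedness plus the Harnack principle (applied to the positive solutions $u_\lm=(H^{(1)}-\lm)^{-1}1_{\{x\}}$, which solve $(\H-\lm)u_\lm=1_{\{x\}}$ by Corollary~\ref{c:H}) propagates finiteness to all pairs and yields in the limit a positive $\H$-superharmonic Green function $G(\cdot,x)$ with $\H G(\cdot,x)=1_{\{x\}}$, whence $w:=1_{\{x\}}$ and a suitable normalization give subcriticality; conversely subcriticality yields a bounded resolvent limit by comparison with $(H-\varepsilon w)^{-1}$. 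The equivalence $(ii)\Leftrightarrow(ii')$ is immediate from the Laplace transform (Lemma~\ref{l:Laplace}): $\int_0^\infty \mathrm{e}^{t\lm}\mathrm{e}^{-tH^{(1)}}1_{\{x\}}(y)\dt=(H^{(1)}-\lm)^{-1}1_{\{x\}}(y)$, and letting $\lm\nearrow 0$ with monotone convergence turns the divergence of the resolvent into divergence of $\int_0^\infty \mathrm{e}^{-tH^{(1)}}1_{\{x\}}(y)\dt$.

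\textbf{Main obstacle.} I expect the genuine difficulty to be the step $(iv')\Rightarrow(iii)$ (uniqueness of the positive superharmonic function), specifically justifying that one may test the Markovian form $h_v$ against the \emph{bounded} truncations of the possibly unbounded function $u/v$ and against the null-sequence simultaneously, and that the resulting inequality $h_v(e_n/v)\ge(\text{nonneg.\ terms involving } u/v)$ in the limit forces $u/v$ constant — this requires care with convergence of the non-locally-finite sums (Fatou vs.\ dominated convergence, exactly the dichotomy in Lemma~\ref{l:convergence}) and with the fact that $u/v\in\F_v(X)$ need not lie in any $\ell^2$ space. The other steps are comparatively routine given the machinery already assembled in Sections~\ref{s:SA_operators}--\ref{s:AP}.
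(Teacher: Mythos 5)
Your overall plan is close in spirit to the paper's — the paper runs the cycle $(i)\Leftrightarrow(iv)$, $(iv)\Leftrightarrow(v)$, $(ii)\Leftrightarrow(ii')$, and then $(i)\&(iv)\Rightarrow(iv')\Rightarrow(iii)\Rightarrow(iii')\Rightarrow(ii)\Rightarrow(i)$ — but there is a concrete gap in your step $(iv)\Rightarrow(iv')$, and it is exactly where you suspect the difficulty lies, though you misdiagnose it.

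You propose to obtain the positive $\H$-harmonic function $v$ \emph{from} the null-sequence by ``using the local Harnack inequality together with the Harnack principle [to] extract a subsequence of suitable truncations $e_n\wedge(\text{something})$ converging pointwise to a positive $\H$-harmonic $v$.'' This cannot work as stated. The Harnack principle (Lemma~\ref{l:convergence}) applies only to a sequence of positive functions $u_n\in\F(X)$ satisfying $\H u_n\ge f u_n$; the $e_n$ are arbitrary nonnegative compactly supported functions and satisfy no such inequality, so the Harnack machinery says nothing about them. Even if one observes that $h(e_n-e_m)\to0$ and hence (after a Lemma~\ref{lem_pointwise}-type argument, which itself needs a positive supersolution supplied by the Allegretto--Piepenbrink theorem to even make sense) the $e_n$ converge pointwise to some $v\ge0$ with $v(o)=c$, it does not follow that $\H v=0$: one can show $\ov h(v,\varphi)=0$ for all $\varphi\in C_c(X)$ via Cauchy--Schwarz, but translating this into $\H v=0$ requires $v\in\F(X)$, which is not automatic. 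The ``key computation that $h(e_n)\to0$ forces the limit to be harmonic'' is therefore not a computation but an unproved claim.

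The paper sidesteps this by first establishing the positive $\H$-harmonic function \emph{independently} of the null-sequence, in Lemma~\ref{l:harmonic_crit}: the Allegretto--Piepenbrink theorem (using $h\ge0$ on $C_c(X)$ and Proposition~\ref{p:H}(a2)) supplies a positive $\H$-superharmonic $v$, the ground state transform then shows $h-\frac{\H v}{v}\ge0$ on $C_c(X)$, and criticality forces $\frac{\H v}{v}\equiv0$, i.e., $v$ is harmonic. Only with this $v$ in hand does one return to the null-sequence and replace $e_n$ by $\widetilde e_n:=e_n\wedge v$, using the Markov property of $h_v$ to keep $h(\widetilde e_n)\to0$ and to force $v^{-1}\widetilde e_n\to1$ pointwise. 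So you need criticality (i.e., statement $(i)$) as an explicit input at this step; in your cycle $(iv)$ alone is the hypothesis, and while $(iv)\Rightarrow(i)$ is true and easy (if $h-w\ge0$, then $0=\lim h(e_n)\ge w(o)c^2$, so $w\equiv0$), you must actually invoke it together with the AP theorem and the ground state transform rather than asking Harnack to produce the harmonic function. Once that is fixed, your $(iv')\Rightarrow(iii)$ can also be streamlined: apply the ground state transform with respect to the \emph{unknown} superharmonic $u$ rather than $v$, so the test functions $e_n/u$ stay in $C_c(X)$ and no truncation of $u/v$ is needed; the vanishing $h_u(e_n/u)\to0$ plus $e_n\to v$ pointwise then forces $v/u$ constant, which is the paper's argument.
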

\begin{defin}[Ground state]
Let a critical form $ h $ be given. For a given $ o\in X  $ we call the  positive $\H$-harmonic function $ \psi$ with $ \psi(o)=1 $ given by the theorem above the (Agmon) \emph{ground state} of $ h $ normalized at $ o $.
\end{defin}

We extract some lemmas from the proof that we will use also in later parts of this paper.

\begin{lem}\label{l:harmonic_crit} Let  $h\ge0$ on $C_{c}(X)$ be critical. Then there exists a non-trivial positive $ \H $-harmonic function and every positive $ \H $-superharmonic function is $\H$-harmonic.
\end{lem}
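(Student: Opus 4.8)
The plan is to derive both assertions from a single observation: if $u$ is any positive $\H$-superharmonic function, then performing the ground state transform with respect to $u$ converts criticality of $h$ into the identity $\H u\equiv 0$. Combined with the Allegretto--Piepenbrink theorem, which already supplies a positive $\H$-superharmonic function from the hypothesis $h\ge 0$, this yields the lemma. (The statement is also contained in Theorem~\ref{t:char_criticality}, but since it belongs to that circle of ideas I give a self-contained argument.)

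First I would fix a positive $\H$-superharmonic function. Since $h\ge 0$ on $C_c(X)$ we have $\lm_0(H^{(1)})\ge 0$ for the counting measure, and $D(H^{(1)})\subseteq\F(X)$ by Proposition~\ref{p:H}(a2); hence Theorem~\ref{t:AP}(b) gives $\max\{\lm\in\R\mid(\H-\lm)\ge 0\text{ on }X\}=\lm_0(H^{(1)})\ge 0$. In particular $(\H-0)\ge 0$ on $X$ (superharmonicity for $\lm_0\ge 0$ entails superharmonicity for $0$), so there is a positive $\H$-superharmonic function $v\in\F(X)$, which is strictly positive by the local Harnack inequality (Lemma~\ref{l:localHarnack}).

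Next I would extract null-sequences based at an arbitrary vertex. Criticality forces $\mathrm{cap}_h(\{x_0\})=0$ for every $x_0\in X$: if $\kappa:=\mathrm{cap}_h(\{x_0\})>0$, then rescaling $\ph$ gives $h(\ph)\ge\kappa\,\ph(x_0)^2$ for all $\ph\in C_c(X)$, i.e.\ $h-\kappa 1_{\{x_0\}}\ge 0$ with $\kappa 1_{\{x_0\}}\gneqq 0$, contradicting criticality. Thus there are $\ph_n\in C_c(X)$ with $\ph_n(x_0)=1$ and $h(\ph_n)\to 0$; replacing $\ph_n$ by $|\ph_n|$ — permissible since $h(|\ph|)\le h(\ph)$ on $C_c(X)$ (the first Beurling--Deny criterion, cf.\ Theorem~\ref{t:closable}) — produces nonnegative $e_n$ with $e_n(x_0)=1$ and $h(e_n)\to 0$.

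The main step is then quick. Let $u$ be an arbitrary positive $\H$-superharmonic function; it is strictly positive by Lemma~\ref{l:localHarnack}, so $f:=\H u/u\in C(X)$ is well defined, $f\ge 0$, and $\H u=fu$. Applying the ground state transform (Proposition~\ref{t:GST}) with respect to $u$ and using $h_u\ge 0$,
\begin{align*}
h(\ph)=h_u\!\left(\tfrac{\ph}{u}\right)+\sum_{x\in X}f(x)\ph(x)^2\ \geq\ \sum_{x\in X}f(x)\ph(x)^2,\qquad\ph\in C_c(X).
\end{align*}
If $f(x_0)>0$ for some $x_0$, inserting the null-sequence $(e_n)$ with $e_n(x_0)=1$ gives $h(e_n)\ge f(x_0)>0$ for all $n$, contradicting $h(e_n)\to 0$. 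Hence $f\equiv 0$, i.e.\ $\H u=0$: every positive $\H$-superharmonic function is $\H$-harmonic. Applying this to the $v$ of the second paragraph exhibits a non-trivial positive $\H$-harmonic function, which completes the proof. I do not expect a real obstacle; the only points to check are that Proposition~\ref{t:GST} applies to a merely superharmonic $u$ (its hypothesis is just $\H u=fu$ with $u\in\F(X)$ strictly positive) and that the null-sequence can be based at any prescribed vertex, which is exactly the capacity computation above.
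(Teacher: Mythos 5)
Your proof is correct and uses the same engine as the paper: fix a positive $\H$-superharmonic function (via Allegretto--Piepenbrink and Harnack), apply the ground state transform to get $h-w\ge 0$ with $w:=\H u/u\ge 0$, and conclude $w\equiv 0$ from criticality. The only deviation is stylistic: you deduce $w\equiv 0$ via a detour through vanishing capacity and null-sequences (effectively re-proving (i)$\Rightarrow$(v)$\Rightarrow$(iv) of Theorem~\ref{t:char_criticality}), whereas the paper closes in one line by noting that $h-w\ge 0$ with $0\le w\gneqq 0$ would directly contradict the definition of criticality; the detour is sound but unnecessary.
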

\begin{proof} 	
	By part (a2) of Proposition~\ref{p:H},
	$D(H^{(1)})\subseteq F(X)$. Hence,  the Allegretto-Piepenbrink theorem
	implies that there exists a positive $\H$-super\-harmonic function
	$v$. 	
	By the local  Harnack inequality, Lemma~\ref{l:localHarnack}, $ v $ is strictly positive.
	Letting $w:=({\H v})/{v}$, we
	observe that
	\begin{align*}
	(H-w)v=0.
	\end{align*}
	Hence, by the ground state transform, Proposition~\ref{t:GST}, we observe
	\begin{align*}
	(h-w)(\ph)= (h-w)_{v}(\ph/v)\ge0,\qquad \ph\in C_{c}(X),
	\end{align*}
	Hence, by criticality $(\H v)/v=w\equiv0$.
	Thus, $v$ is a positive $\H$-harmonic function.
\end{proof}

\begin{lem}\label{l:GFbound} Let $b$ be a connected graph over $X$, let $q$ be a potential and $ w\ge0 $ be such that $h-w\ge0$ on $C_{c}(X)$ and let $ m $ be a measure of full support on $ X $. Then 	\begin{align*}
w(x)(H^{(m)}-\lm)^{-1}1_{\{x\}}(x) \leq 1, \qquad  \lm<0 \mbox{ and } x\in X.
	\end{align*}
\end{lem}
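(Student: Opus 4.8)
The plan is to exploit the resolvent as a positive solution operator together with a variational (energy) comparison. First I would fix $x\in X$ and $\lm<0$, and set $G_\lm := (H^{(m)}-\lm)^{-1}$, which is positivity preserving by Corollary~\ref{c:posivitity_preserving}. Let $g:=G_\lm 1_{\{x\}}\in\ell^2(X,m)$; since $1_{\{x\}}\gneqq 0$, the function $g$ is positive, and by the Green formula (together with Corollary~\ref{c:H}, or more elementarily by the abstract definition of $D(H^{(m)})$) one has $h^{(m)}(g,\psi)-\lm\langle g,\psi\rangle_m=\langle 1_{\{x\}},\psi\rangle_1=\psi(x)$ for all $\psi$ in the form domain $D(h^{(m)})$. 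In particular, testing against $g$ itself gives
\begin{align*}
h^{(m)}(g)-\lm\|g\|_m^2 = g(x).
\end{align*}

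Next I would bring in the hypothesis $h-w\ge 0$ on $C_c(X)$. Since $w\ge 0$ and $\lm<0$, the form $\ph\mapsto h(\ph)-w(\ph)-\lm\|\ph\|_m^2$ is nonnegative and closable (being bounded below by the closable $h$); denote its closure by $(h-w)^{(m)}$. I need to know that $g$ lies in the domain of this closed form and that the inequality $h^{(m)}(g)\ge (h-w)^{(m)}(g)+w(g)$ persists — this is exactly the kind of extension-to-the-closure argument already carried out in the proof of Theorem~\ref{t:closable} (approximate $g$ in the form norm by $\ph_n\in C_c(X)$, use lower semicontinuity of the closed form $(h-w)^{(m)}$ and $w(\ph_n)\to w(g)$ by pointwise convergence and Fatou, since $w\ge0$). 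Granting this, and using $w(g)=\sum_y w(y)g(y)^2\ge w(x)g(x)^2$, we obtain
\begin{align*}
g(x) = h^{(m)}(g)-\lm\|g\|_m^2 \ge (h-w)^{(m)}(g) - \lm\|g\|_m^2 + w(x)g(x)^2 \ge w(x)g(x)^2,
\end{align*}
where the last step drops the two nonnegative terms $(h-w)^{(m)}(g)\ge0$ and $-\lm\|g\|_m^2\ge0$. Dividing by $g(x)>0$ yields $w(x)g(x)\le 1$, which is precisely the claimed bound $w(x)(H^{(m)}-\lm)^{-1}1_{\{x\}}(x)\le 1$.

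\textbf{Main obstacle.}
The only genuinely delicate point is justifying that $g=G_\lm 1_{\{x\}}$ belongs to the domain of the closed form $(h-w)^{(m)}$ and that the energy comparison $h^{(m)}(g)\ge(h-w)^{(m)}(g)+w(g)$ survives the passage to the closure; on $C_c(X)$ it is trivial, but $g$ need not be compactly supported. The clean way around this is to note that $g\in D(h^{(m)})$ (as $g\in D(H^{(m)})\subseteq D(h^{(m)})$), pick $\ph_n\in C_c(X)$ with $\ph_n\to g$ in $\|\cdot\|_{h^{(m)}}$, observe $h(\ph_n)-w(\ph_n)\ge0$ so $w(\ph_n)\le h(\ph_n)\to h^{(m)}(g)$ stays bounded, and then pass to the limit using pointwise ($\ell^2$-)convergence, Fatou's lemma for the nonnegative quantity $w$, and lower semicontinuity of $h^{(m)}$; this gives $w(g)\le\liminf w(\ph_n)\le\liminf h(\ph_n)=h^{(m)}(g)$, hence $h^{(m)}(g)-w(g)\ge0$, which combined with the identity $h^{(m)}(g)-\lm\|g\|_m^2=g(x)$ and $\lm<0$ already yields $w(x)g(x)^2\le w(g)\le h^{(m)}(g)\le h^{(m)}(g)-\lm\|g\|_m^2=g(x)$. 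So in fact one does not even need to name the form $(h-w)^{(m)}$; the inequality $w(g)\le h^{(m)}(g)$ obtained by approximation is enough.
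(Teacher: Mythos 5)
Your proof is correct and is essentially the paper's own argument: both extend the inequality $h\ge w$ from $C_c(X)$ to the form closure via an approximating sequence, pointwise convergence and Fatou, then pair $(H^{(m)}-\lm)g$ with $g$ and discard the nonnegative terms $-\lm\|g\|_m^2$ and $h^{(m)}(g)-w(g)$. One bookkeeping caveat: since $\langle 1_{\{x\}},\psi\rangle_m=m(x)\psi(x)$, your test identity should read $h^{(m)}(g)-\lm\|g\|_m^2=m(x)g(x)$; this $m(x)$ cancels only if the lower bound $w$ is also weighted by $m$ (as in the paper's own display $\langle wg,g\rangle_m\ge m(x)w(x)g(x)^2$), a normalization issue that is invisible in the only case actually used, $m=1$.
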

\begin{proof} 	We let $h^{(m)}$ be the closure of $h$ in $\ell^{2}(X,m)$. Then, for 	all $f\in D(h^{(m)})$ with approximating sequence $ (f_{n}) $ in $ C_{c}(X) $ we have by Fatou's lemma
	$$ h^{(m)}(f)=\lim_{n\to\infty}h^{(m)}(f_{n})\ge
	\liminf_{n\to\infty}\langle
	wf_{n},f_{n}\rangle_{m}\ge \langle
	wf,f\rangle_{m}. $$
	Let $H^{(m)}$ be the positive self-adjoint
	operator associated to $h^{(m)}$. For $\lambda<0$ and a fixed $x\in X$, denote $g_{\lm}:=(H^{(m)}-\lm )^{-1}1_{\{x\}}$.
	We find that for all $\lm<0$
\begin{align*}
m(x)g_{\lm}(x)&=  \langle 1_{\{x\}}, g_{\lm}\rangle_{m}
=  \langle (H^{(m)}-\lm) g_{\lm}, g_{\lm}\rangle_{m}
 \\&
 \geq h^{(m)}( g_{\lm}) \ge \langle w g_{\lm}, g_{\lm} \rangle_{m}
	\ge m(x)w(x)g_{\lm}(x)^{2}.
\end{align*}
Since $ g_{\lm}(x)>0 $ (by Harnack's inequality), and as $m$ has full support, the statement follows.
\end{proof}

\begin{proof}[Proof of Theorem~\ref{t:char_criticality}]
	Let us first comment on the limit in (ii) and the equivalence of the formulation with ``some'' and ``all'': By the resolvent formula and positivity preservation of the resolvents (Corollary~\ref{c:posivitity_preserving}), the map $ \lm\mapsto (H^{(1)}-\lm)^{-1}1_{x}(y) $ is monotone increasing in $ \lm $: precisely, for $\lm \geq \mu$, one has
	$$ (H^{(1)}-\lm)^{-1}1_{x} - (H^{(1)}-\mu)^{-1}1_{x} = (\lm - \mu)(H^{(1)}-\lm)^{-1}(H^{(1)}-\mu)^{-1} 1_{x} \geq 0.$$
		 Hence, the limit either exists and is positive or it is infinite. Moreover, the functions $ g_{\lm}=(H^{(1)}-\lm)^{-1}1_{x} $ are $ \H $-superharmonic (since $ H^{(1)} $ is a restriction of $ \H $ by Corollary~\ref{c:H}). Assuming the limit exists at one vertex $ y $, these functions satisfy the assumptions of the Harnack principle, Lemma~\ref{l:convergence}. Hence, the limits exist for all vertices, as $ g_{\lm} $ then converges to a $ \H $-superharmonic function. By the symmetry of $ x,y $ in $ (H^{(1)}-\lm)^{-1}1_{x}(y)  $ due to self-adjointness of $ H^{(1)} $ on $ \ell^{2}(X) $ the limit then also exists for all $ x $.
	\medskip
	
	Let us now turn to show the equivalences.
	\medskip
	
	(ii) $\Longleftrightarrow$ (ii'): This equivalence follows from Lemma~\ref{l:Laplace} (the
	Laplace transform).
	\medskip
	
	(iv) $\Longleftrightarrow$ (v): This simply follows from the definition
	of $\mathrm{cap}_{h}$.
	\medskip

	We proceed by proving that  (i) $\Longleftrightarrow$ (iv).
	\medskip
	
	(i)$\Longrightarrow$(iv): Let $w_{n}=\frac{1}{n} 1_{\{o\}}$ for $o\in
	X$. Then the  criticality implies for all $ n\in\N $ the existence of $0\leq e_{n}\in
	C_{c}(X)$ such that
	\begin{align*}
	0\le    h(e_{n})< \sum_{x\in X}w_{n}(x)e_{n}(x)^{2}= \frac{1}{n}
	e_{n}(o)^{2}.
	\end{align*}
	In particular, $e_{n}(o)>0$ and, thus, $(e_{n})$ can be chosen such
	that $e_{n}(o)=c$. Hence, $h(e_{n})\leq \frac{c^2}{n}$ which implies
	that $(e_{n})$ is a null-sequence.
	
	\medskip

(iv) $\Longrightarrow$ (i): Let $w\ge 0$ such that $h\ge w$ on $C_{c}(X)$.
Choose $o\in X$ and $(e_{n})$ as assumed. Then,
$$0=\lim_{n\to\infty}h(e_{n})\ge
\limsup_{n\to\infty}\sum_{x\in
	X}w(x)e_{n}(x)^{2}\ge\limsup_{n\to\infty}
w(o)e_{n}(o)^{2}=w(o)c^{2}.$$ Since $c>0$, we infer $w(o)=0$. As $o$
can be chosen arbitrarily, we conclude  $w\equiv0$.
		\medskip	
	
We show	next that (i)\,\&\,(iv) $\Longrightarrow$ (iv') $\Longrightarrow$ (iii)
	$\Longrightarrow$ (iii')  $\Longrightarrow$ (ii)  $\Longrightarrow$ (i).
		
\medskip

(i)\,\&\,(iv) $\Longrightarrow$ (iv'):
 Let $v$ be a positive $\H$-harmonic function whose existence is guaranteed by Lemma~\ref{l:harmonic_crit}. Let
$(e_{n})$ be a null-sequence such that $e_n(o)=v(o)$, where $o\in X$. Let $h_{v}$ be
the ground state transform with respect $v$. Then, since $h_{v}$ is Markovian
\begin{align*}
h(e_{n}\wedge v)=  h_{v}( v^{-1}(e_{n}\wedge v))=    h_{v}(
v^{-1}e_{n}\wedge 1)\leq h_{v}(
v^{-1}e_{n})=h(e_{n})\xrightarrow[n\to\infty]{} 0.
\end{align*}
Hence, we can replace $e_{n}$ by $\ow
e_{n}:=e_{n}\wedge v$. Furthermore, $h_{v}(v^{-1}\ow e_{n})\to 0$, thus, $ (v^{-1}\ow e_{n}) $ converges pointwise to a constant and since
$e_{n}(o)=v(o)$ have $v^{-1}\ow e_{n}\to 1$ pointwise as $n\to\infty$.	

\medskip

(iv') $\Longrightarrow$ (iii): Let $ v $ be the positive $ \H $-harmonic function and $ (e_{n}) $ be the null sequence such that $ e_{n} \to v$ pointwise as $ n\to\infty $ whose existence is assumed by (iv'). Let $ u $ be a non-trivial positive $ \H $-superharmonic function which is strictly positive by the local Harnack inequality, Lemma~\ref{l:localHarnack}. Then, by the ground state transform, we infer,
\begin{align*}
0\leq h_{u}\left(\frac{e_{n}}{u}\right)\leq h(e_{n})\xrightarrow[n\to\infty]{} 0.
\end{align*}
Hence, there is a constant $c>0  $ such that for all $ x\in X $
$$ c=\lim_{n\to\infty} \frac{e_{n}(x)}{u(x)}  =\frac{v(x)}{u(x)}\,.$$
Thus, $ v $ and $ u $ are linearly dependent.

\medskip

(iii) $\Longrightarrow$ (iii'):  This is trivial.

\medskip

(iii') $\Longrightarrow$ (ii): Suppose that the limit in (ii) is finite for
all $x,y\in X$. Then for $x\in X$ the limit $G(x,\cdot)$ is a
positive $H$-superharmonic function such that  $\H G(x, \cdot)=1_{\{x\}}$ by
Lemma~\ref{l:convergence},. Hence,
via $G(x,\cdot)$, $x\in X$, there are infinitely many linearly independent
positive $H$-super\-harmonic functions, and this contradicts (iii').

\medskip

(ii) $\Longrightarrow$ (i): Assume that $h$ is subcritical. Then, there
exists a positive $w$ such that $h-w\ge0$ on $C_{c}(X)$.  Then, for all $\lambda<0$ and a fixed $x\in
\mathrm{supp}\, w$, we have by Lemma~\ref{l:GFbound}
\begin{equation*}
\lim_{\lm\nearrow 0}(H^{(1)}-\lm )^{-1}1_{\{x\}}(x)\le \frac{1}{w(x)}<\infty. \qedhere
\end{equation*}
\end{proof}
\begin{rem}
Statement (iv') of Theorem~\ref{t:char_criticality} seems to be not known in the context of symmetric second-order elliptic differential operators, but indeed, it follows by the same argument as in the proof above.
\end{rem}

There is a closely related phenomena called recurrence which is vastly studied in the literature. In the remark below we discuss how the results concerning recurrence
for Dirichlet forms, random walks and positive matrices are related
to our setting.
\begin{rem}\label{r:4.5}
(a) First we look at random walks and positive matrices. Note that
the operator $L$ in the decomposition of $H=L+q$ can be further
decomposed into $L=D-A$ such that $D$ is the multiplication operator  by
the weighted vertex degree $d(x)=\sum_{y}b(x,y)$, $x\in X$, and $A$ is the weighted adjacency matrix with entries $b(x,y)$, $x,y\in
X$. In random walks, one studies the transition matrix of a graph
that is given by $P=D^{-1}A$. Then, recurrence is defined via the
divergence of the sum $\sum_{n}P^{n}$ which can shown to be
equivalent  in the case $q=0$ to condition (ii) of Theorem~\ref{t:char_criticality}. As we allow
for non-vanishing $q$ our setting is more general. However, the
setting
of positive matrices, see e.g. \cite{Pru64,VeJo67}, includes
the one of random walks and is much
more flexible. There irreducible
matrices with positive entries are studied. Given our setting of a
connected graph, we observe that $h(1_{\{x\}})=\deg(x)+q(x)$, $x\in
X$. Hence, $\deg+q\ge 0$ is implied by  $h\ge0$ and even $\deg+q> 0$
since otherwise $1_{\{x\}}$ would be an eigenfunction and, therefore, $x$
is an isolated vertex. Hence, we can consider the matrix
$(D+q)^{-1}A$, and this brings us into the realm of the theory of
positive matrices.
In this context the equivalence of (iv) and (v)
and the existence of a unique positive solution have been proven in \cite{Pru64,VeJo67}.

(b) For Dirichlet forms the equivalences of the theorem are well
known provided there is a suitable notion of superharmonic
functions, see e.g.  \cite{Fuk}  except for (iii) and (iii'). This framework includes forms $h$ on graphs $b$ with
potentials $q\ge0$, see \cite{KL1}. As we allow for non-positive
$q$, our theory is a priori not included in the latter work.
However, once one has
shown the existence of a positive harmonic function the results can
be carried over using the ground state transform. Unfortunately, this can be  guaranteed a priori only  for locally finite graphs, see \cite{HK}.
\end{rem}

\subsection{The extended space}
In this section we extend the form $ h $ to a  larger set of functions by taking an appropriate  closure of $ C_{c}(X) $. In the critical case we obtain an explicit representation of the action of $ h $ on this closure. Furthermore, this allows us formulate an elegant negation of statement (iv) of the Theorem~\ref{t:char_criticality} to characterize subcriticality below.

Let $h\geq 0$ on $C_{c}(X)$ for a connected graph $ b $ and a potential $ q $ and fix $o\in X$. Define
 \begin{align*}
 \|\ph\|_{h,o}:=\sqrt{h(\ph)+\ph^2(o)}, \qquad \ph\in C_{c}(X)
 \end{align*}
 which is a priori  a seminorm on $C_{c}(X)$. By the Green formula
 and the Harnack inequality we even have positive definiteness on
 $C_{c}(X)$, and we obtain a norm.
 \begin{lem}\label{lem_pointwise} Let $b$ be a connected graph over $X$, let
 	$q$ be a potential such that $h\ge0$ on $C_{c}(X)$.
 	\begin{itemize}
 		\item [(a)] The norms $\|\cdot\|_{h,o}$
 		and $\|\cdot\|_{h,o'}$ are equivalent, for any $o$ and $o'$ in $X$.
 		\item [(b)] Convergence with respect to $\|\cdot\|_{h,o}$ implies
 		pointwise convergence.
 	\end{itemize}
 \end{lem}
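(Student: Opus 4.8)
\textbf{Proof plan for Lemma~\ref{lem_pointwise}.}
The plan is to prove part (b) first, since part (a) will follow easily from it together with a contradiction/compactness argument. For part (b), the key observation is that for a connected graph, convergence in $\|\cdot\|_{h,o}$ controls the values of a function at every vertex. I would proceed as follows: it suffices to show that for each fixed $x\in X$ there is a constant $C_x>0$ with $|\ph(x)|\le C_x\|\ph\|_{h,o}$ for all $\ph\in C_c(X)$; then any $\|\cdot\|_{h,o}$-Cauchy sequence is pointwise Cauchy and its pointwise limit agrees with its abstract limit. To get this pointwise bound, I would invoke the local Harnack inequality (Lemma~\ref{l:localHarnack}) in the form already exploited in the proof of Lemma~\ref{l:GFbound}: for the operator $H$ with $h\ge0$ and a fixed vertex $x$, the Green-type quantity $(H^{(1)}-\lm)^{-1}1_{\{x\}}$ is a positive $H$-superharmonic function, strictly positive everywhere by Harnack, hence bounded below by a positive constant near $o$. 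More directly, for $\ph\in C_c(X)$ the Green formula gives $\ph(x)=\langle 1_{\{x\}},\ph\rangle = h^{(1)}(g_\lm,\ph)$ where $g_\lm=(H^{(1)}-\lm)^{-1}1_{\{x\}}$ for some fixed $\lm<0$; by Cauchy--Schwarz for the (nonnegative) form $h^{(1)}$ applied after absorbing the $-\lm\|\cdot\|^2$ term, $|\ph(x)|^2 \le (h(g_\lm)+\lm^{-}\|g_\lm\|^2)\cdot(h(\ph)+\lm^{-}\|\ph\|^2)$, and then a second application of Harnack bounds $\|\ph\|^2_{1}$ restricted to... this needs care because $\|\cdot\|_{h,o}$ only sees one vertex, not the full $\ell^2$ norm.

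Let me instead use the cleaner route via the extended form directly. Fix $x$ and a finite connected set $W\ni o,x$. The most transparent argument: by criticality/subcriticality dichotomy is irrelevant here; we only need that $\|\cdot\|_{h,o}$ is a norm, which the paragraph preceding the lemma already asserts via Green's formula and Harnack. For the quantitative pointwise bound I would argue by contradiction using completeness is circular, so instead: Let $\psi_x\in C_c(X)$ be a fixed cutoff with $\psi_x(x)=1$; expand $\ph(x) = \ph(x)\psi_x(x)$, and write $h(\ph)\ge 0$. Use the ground-state transform with respect to a fixed positive $H$-superharmonic $v$ (which exists by Allegretto--Piepenbrink, Theorem~\ref{t:AP}, since $D(H^{(1)})\subseteq F(X)$ by Proposition~\ref{p:H}(a2)): $h(\ph) = h_v(\ph/v) + w(\ph)$ where $w=(Hv)/v\ge 0$, so $h(\ph)\ge h_v(\ph/v) = \frac12\sum_{y,z}b(y,z)v(y)v(z)(\ph(y)/v(y)-\ph(z)/v(z))^2$. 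Along a fixed path $o=x_0\sim x_1\sim\dots\sim x_n=x$ in the connected graph, telescoping and Cauchy--Schwarz give $|\ph(x)/v(x)-\ph(o)/v(o)|^2 \le n\sum_{j}(\ph(x_j)/v(x_j)-\ph(x_{j+1})/v(x_{j+1}))^2 \le \frac{2n}{\min_j b(x_j,x_{j+1})v(x_j)v(x_{j+1})} h_v(\ph/v) \le C_x'\, h(\ph)$. Combined with $|\ph(o)|\le \|\ph\|_{h,o}$ and strict positivity of $v$, this yields $|\ph(x)|\le C_x\|\ph\|_{h,o}$, proving (b).

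For part (a), fix $o,o'\in X$. By part (b) applied with base point $o$, we have $|\ph(o')|\le C_{o'}\|\ph\|_{h,o}$ for all $\ph\in C_c(X)$, hence
\begin{align*}
\|\ph\|_{h,o'}^2 = h(\ph)+\ph(o')^2 \le h(\ph) + C_{o'}^2\|\ph\|_{h,o}^2 \le (1+C_{o'}^2)\|\ph\|_{h,o}^2.
\end{align*}
By symmetry (swapping $o$ and $o'$) we get the reverse inequality, so the two norms are equivalent with constants depending only on the Harnack/path constants through $o$ and $o'$. I would present part (a) immediately after (b) in this way.

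\textbf{Main obstacle.} The only real subtlety is producing the pointwise bound $|\ph(x)|\le C_x\|\ph\|_{h,o}$ without secretly assuming completeness of the extended space (which is exactly what one wants to set up afterwards). The ground-state-transform-plus-path argument above is self-contained and avoids this circularity, at the cost of first invoking Allegretto--Piepenbrink to guarantee a positive superharmonic $v$; one should double-check that Theorem~\ref{t:AP}(b) applies, i.e. that $D(H^{(1)})\subseteq F(X)$, which holds by Proposition~\ref{p:H}(a2) for the counting measure. An alternative that sidesteps even this is to use $g_\lm = (H^{(1)}-\lm)^{-1}1_{\{x\}}$ directly as a test function in Green's formula as sketched first, but then one must control $\|\ph\|_1$ by $\|\ph\|_{h,o}$, which fails in general — so the path argument via $h_v$ is the way to go.
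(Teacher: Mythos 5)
Your final argument is correct and is essentially the paper's own proof: both rest on the ground state transform with respect to a positive $H$-superharmonic function (whose existence you rightly justify via Theorem~\ref{t:AP} and Proposition~\ref{p:H}(a2)), followed by a telescoping sum along a connecting path and Cauchy--Schwarz to bound $|\ph(x)/v(x)-\ph(o)/v(o)|^2$ by a constant times $h(\ph)$. The only difference is that you derive (a) from the pointwise bound of (b) while the paper states the estimate as (a) and deduces (b) from it, which is immaterial.
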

 \begin{proof}
 	Fix a positive $H$-supersolution $u$ on $X$. Let
 	$o,o'\in X$, and let a path $o=x_{0}\sim\ldots\sim x_{n}=o' $ be
 	given. Set
 	\begin{align*}
 	B_{o,o'}:=\left(\sum_{i=0}^{n-1} \frac{1}{b(x_{i},x_{i+1})u(x_{i})u(x_{i+1})}\right).
 	\end{align*}
 	Then, using a telescoping sum argument along the path and the
 	Cauchy-Schwarz inequality yields by means of the ground state transform (see, Proposition~\ref{t:GST}),
 	\begin{align*}
 	\left|\frac{\ph(o)}{u(o)}-\frac{\ph(o')}{u(o')}\right|^{2}&\leq
 	B_{o,o'}h_{u}\left(\frac{\ph}{u}\right)\leq B_{o,o'}h(\ph),
 	\end{align*}
 	for all $\ph\in C_{c}(X)$. Thus,
 	$\left|\frac{\ph(o)}{u(o)}-\frac{\ph(o')}{u(o')}\right|^2$ can be
 	controlled by $h(\ph)$, and therefore, the norms $\|\cdot\|_{h,o}$
 	and $\|\cdot\|_{h,o'}$  are equivalent which is (a). Statement (b)
 	follows directly from (a).
 \end{proof}

 We denote by $D_{h,o}$ the completion of $C_{c}(X)$ with respect to
 $\|\cdot\|_{h,o}$. Note that in general such a space might not be a function space. But Lemma~\ref{lem_pointwise} implies that $D_{h,o}$ is a function space and hence a subspace of $ C(X) $. So,
 $$D_{h,o}:=\ov{C_{c}(X) }^{\|\cdot\|_{h,o}}\subset C(X). $$
 In analogy to the theory of Dirichlet forms, we call it the \emph{extended space } of the form $ h $.

 Note that by Lemma~\ref{lem_pointwise} above,
 $D_{h,o}$ does not depend on the choice of $o$ due to connectedness. Clearly, $h$ is
 closable on $D_{h,o}$ and we denote the closure by  $ \overline
 {h}$, i.e., for a sequence $ (\ph_{n}) $ in $ C_{c}(X) $ that converges with respect to $ \|\cdot\|_{{h,o}} $ to a function $ \ph\in D_{h,o} $ we let
 \begin{align*}
 \ov h(\ph):=\lim_{n\to\infty}h(\ph_{n}).
 \end{align*}
By polarization  $ \ov h $ induces a bilinear form on $\ov h :D_{h,o}\times D_{h,o}\to \R $ and, hence,
$(D_{h,o},\ov h )$ becomes a semiscalar product space.

In the critical case there is an alternative way to define an extension of a form $ h $ that satisfies $ h\ge0 $ on $ C_{c}(X) $. Let $ v $ be the ground state of the critical form $ h $ and let $ h_{v} $ be its ground state transform.
Note that since $ h_{v} $ consists of a sum involving positive terms only,  we can define $ \ow {h}_{v}:C(X)\to [0,\infty] $ via
\begin{align*}
\ow h_{v}(f):=\frac{1}{2}\sum_{x,y\in X}b(x,y)v(x)v(y )(f(x)-f(y))^{2}
\end{align*}
and
\begin{align*}
\ow D_{h_{v}}=\{f\in C(X)\mid \ow h_{v}(f)<\infty\}.
\end{align*}
By Fatou's lemma $ \ow h_{v}  $ is lower semi-continuous and, thus, closed on $ D_{h_{v}} $.
This observation allows us to define $ \ow {h}:C(X)\to [0,\infty] $ via
\begin{align*}
\ow h(f):= \ow h_{v}\left(\frac{f}{v}\right)=\frac{1}{2}\sum_{x,y\in X}b(x,y)v(x)v(y )\left(\frac{f(x)}{v(x)}-\frac{f(y)}{v(y)}\right)^{2},
\end{align*}
and
\begin{align*}
\ow D_{h}:=\{f\in C(X)\mid \ow h(f)<\infty\}=v^{-1}\ow D_{h_{v}}.
\end{align*}
The next theorem shows that in the critical case the forms $ \ov h $ and $ \ow h $ coincide.
\begin{thm}\label{t:formcrit}
 Let $b$ be a connected graph over $X$, let
 $q$ be a potential such that $h\ge0$ on $C_{c}(X)$ is critical and $ o\in X $. Then
 \begin{align*}
D_{h,o} =\ow D_{h},
 \end{align*}
 and for $ f\in D_{h,o} $
 \begin{align*}
 \ov h(f)=\ow h(f).
 \end{align*}
\end{thm}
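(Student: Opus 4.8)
The plan is to pass to the ground state transform so that the ground state becomes the constant function $1$, and then to prove the two inclusions $D_{h,o}\subseteq\ow D_{h}$ (from lower semicontinuity) and $\ow D_{h}\subseteq D_{h,o}$ (from a cut-off argument using criticality), reading off the identity $\ov h=\ow h$ along the way. For the reduction, let $v$ be the ground state of $h$ normalized at $o$, so $v(o)=1$ and $\H v=0$. By Proposition~\ref{t:GST} (with $f=0$) we have $h(\ph)=h_{v}(\ph/v)$ for $\ph\in C_{c}(X)$, so since $v(o)=1$ the map $T_{v}\colon\ph\mapsto v\ph$ is an isometry from $(C_{c}(X),\|\cdot\|_{h_{v},o})$ onto $(C_{c}(X),\|\cdot\|_{h,o})$; as convergence in either norm is pointwise (Lemma~\ref{lem_pointwise}(b)), it extends to an isometry $D_{h_{v},o}\to D_{h,o}$ still given by $f\mapsto vf$. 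Likewise $\ow h(f)=\ow h_{v}(f/v)$ by definition, so $f\in\ow D_{h}$ iff $f/v\in\ow D_{h_{v}}$, and $\ov h,\ow h$ are the images under $T_{v}$ of $\ov h_{v},\ow h_{v}$. Moreover $h_{v}$ is the form of the connected graph $\tilde b(x,y):=b(x,y)v(x)v(y)$ (locally summable since $v\in\F(X)$) with zero potential; it is nonnegative on $C_{c}(X)$ and critical with ground state $1$ (if $h_{v}\ge w'$ on $C_{c}(X)$ for a positive $w'$, then via $\ph=v\psi$ one gets $h\ge w'/v^{2}$ on $C_{c}(X)$, contradicting criticality of $h$). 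Hence it suffices to treat $v\equiv 1$: then $\ow h(f)=\tfrac12\sum_{x,y\in X}b(x,y)(f(x)-f(y))^{2}$ and $\ow h=h$ on $C_{c}(X)$, and by Theorem~\ref{t:char_criticality}(iv') we may fix $(\eta_{n})\subset C_{c}(X)$ with $0\le\eta_{n}\le 1$, $\eta_{n}\to 1$ pointwise, and $h(\eta_{n})\to 0$.

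\emph{Step 1: $D_{h,o}\subseteq\ow D_{h}$ and $\ov h=\ow h$ there.} Let $f\in D_{h,o}$ and let $(\ph_{n})\subset C_{c}(X)$ be $\|\cdot\|_{h,o}$-Cauchy with $\ph_{n}\to f$, which is pointwise convergence by Lemma~\ref{lem_pointwise}(b). Since $\ow h$ is lower semicontinuous for pointwise convergence (Fatou) and agrees with $h$ on $C_{c}(X)$, for each fixed $n$ we have $\ow h(f-\ph_{n})\le\liminf_{m}\ow h(\ph_{n}-\ph_{m})=\liminf_{m}h(\ph_{n}-\ph_{m})$, and the right-hand side tends to $0$ as $n\to\infty$. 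In particular $\ow h(f)<\infty$, so $f\in\ow D_{h}$, and $\ow h(f)=\lim_{n}\ow h(\ph_{n})=\lim_{n}h(\ph_{n})=\ov h(f)$.

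\emph{Step 2: $\ow D_{h}\subseteq D_{h,o}$, and where the difficulty lies.} First reduce to bounded functions: for $f\in\ow D_{h}$ set $f^{(N)}:=(-N)\vee f\wedge N$; since $t\mapsto t-((-N)\vee t\wedge N)$ is $1$-Lipschitz and vanishes pointwise as $N\to\infty$, dominated convergence with the summable majorant $b(x,y)(f(x)-f(y))^{2}$ gives $\ow h(f-f^{(N)})\to 0$, and also $f^{(N)}(o)\to f(o)$; granting that bounded members of $\ow D_{h}$ lie in $D_{h,o}$, Step~1 makes $(f^{(N)})$ Cauchy in $D_{h,o}$ with pointwise limit $f$, so $f\in D_{h,o}$. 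Now let $g\in\ow D_{h}$ with $\|g\|_{\infty}=:C$ and put $g_{n}:=\eta_{n}g\in C_{c}(X)$. Using the identity $(1-\eta_{n}(x))g(x)-(1-\eta_{n}(y))g(y)=(1-\eta_{n}(x))(g(x)-g(y))+g(y)(\eta_{n}(y)-\eta_{n}(x))$ together with $(a+b)^{2}\le 2a^{2}+2b^{2}$ gives
\begin{align*}
\ow h(g-g_{n})\le \sum_{x,y\in X}b(x,y)\big(1-\eta_{n}(x)\big)^{2}\big(g(x)-g(y)\big)^{2}+2C^{2}h(\eta_{n}),
\end{align*}
and both terms tend to $0$: the second by the choice of $\eta_{n}$, the first by dominated convergence (majorant $b(x,y)(g(x)-g(y))^{2}$, pointwise limit $0$ since $\eta_{n}\to 1$). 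Since also $(g-g_{n})(o)\to 0$, the sequence $(g_{n})$ is $\|\cdot\|_{h,o}$-Cauchy and converges in $D_{h,o}$ to its pointwise limit $g$, so $g\in D_{h,o}$. Combining Steps~1 and~2 and undoing the reduction proves the theorem. The substantive point is the density of $C_{c}(X)$ in $\ow D_{h}$ in the energy norm, and the one place criticality genuinely enters is the existence of the cut-off sequence $(\eta_{n})$ — bounded by the ground state, converging to it, with vanishing energy — supplied precisely by Theorem~\ref{t:char_criticality}(iv'); the truncation step is needed only to keep the cross term $2C^{2}h(\eta_{n})$ finite for unbounded $f$.
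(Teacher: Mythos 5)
Your proof is correct and follows essentially the same route as the paper's: reduce to the zero-potential case via the ground state transform, get $D_{h,o}\subseteq\ow D_{h}$ and $\ov h=\ow h$ from Fatou/lower semicontinuity, and get the reverse inclusion by multiplying with a null-sequence converging pointwise to the ground state, using the same elementary inequality, dominated convergence, and truncation (the paper delegates this last step to the argument of Lemma~\ref{l:approximation_by_nullseq}). The only cosmetic differences are that you make the isometry $T_{v}$ and the reduction to $v\equiv 1$ explicit at the outset, and that you truncate before applying the cutoff rather than after.
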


To prove this theorem we use some standard arguments from Dirichlet form theory. For the convenience of the reader we give the proofs.

\begin{lem}\label{l:approximation_by_nullseq}
	Let $q=0$, and $b$ be a graph such that $h\ge0$ is critical. Let
	$(e_{n})$ be a null-sequence for $h$, satisfying  $0\le e_{n}\le 1$. Then for any  function $f$ we
	have
	\begin{align*}
	\lim_{n\to\infty}h(e_{n}f)= \ow h(f).
	\end{align*}
\end{lem}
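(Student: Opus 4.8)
The plan is to exploit that, since $q=0$, the constant function $1$ is $\H$-harmonic ($\L 1=0$), and by criticality (Theorem~\ref{t:char_criticality}, parts (i) and (iii)) it is the unique positive $\H$-superharmonic function up to scaling; hence the ground state is $v\equiv 1$, so that $\ow h(f)=\tfrac12\sum_{x,y}b(x,y)(f(x)-f(y))^2$, and we may take the null-sequence normalized so that $e_n\to 1$ pointwise (this is precisely the sequence produced in the proof of Theorem~\ref{t:char_criticality}(iv')). Since there is no potential term, $h(e_nf)=\tfrac12\sum_{x,y}b(x,y)(e_nf(x)-e_nf(y))^2$, so the assertion is a statement about convergence of these nonnegative sums.

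First I would prove the lower bound $\liminf_{n}h(e_nf)\ge \ow h(f)$. Because $e_n(x)\to 1$ for every $x$, we have $e_nf(x)-e_nf(y)\to f(x)-f(y)$ pointwise on $X\times X$, and applying Fatou's lemma to the sum defining $h(e_nf)$ gives the inequality. This already settles the case $\ow h(f)=\infty$, so I assume henceforth that $\ow h(f)<\infty$, i.e.\ $f\in\ow D_{h}$.

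For the upper bound I would use the discrete Leibniz identity
\begin{align*}
e_n(x)f(x)-e_n(y)f(y)=\frac{e_n(x)+e_n(y)}{2}\,(f(x)-f(y))+\frac{f(x)+f(y)}{2}\,(e_n(x)-e_n(y)),
\end{align*}
together with the triangle inequality for the norm $\Phi\mapsto(\tfrac12\sum_{x,y}b(x,y)\Phi(x,y)^2)^{1/2}$. Using $0\le e_n\le 1$ the first summand contributes at most $\ow h(f)^{1/2}$, hence
\begin{align*}
h(e_nf)^{1/2}\le \ow h(f)^{1/2}+\Big(\tfrac18\sum_{x,y}b(x,y)(f(x)+f(y))^2(e_n(x)-e_n(y))^2\Big)^{1/2}.
\end{align*}
If $f$ is bounded the remaining term is at most $\|f\|_\infty\,h(e_n)^{1/2}\to 0$, so together with the lower bound the claim follows for bounded $f$. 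For general $f\in\ow D_{h}$ I would truncate: with $f_M:=(-M)\vee f\wedge M$ one has $\ow h(f_M)\le\ow h(f)$, $\ow h(f-f_M)\le\ow h(f)$ (the soft-threshold map $t\mapsto t-((-M)\vee t\wedge M)$ is $1$-Lipschitz), and $\ow h(f-f_M)\to 0$ as $M\to\infty$ by dominated convergence applied to $\tfrac12\sum_{x,y}b(x,y)\big((f-f_M)(x)-(f-f_M)(y)\big)^2$ (the summand decreases pointwise to $0$ and is dominated by $(f(x)-f(y))^2$, which is $b$-summable since $\ow h(f)<\infty$). Writing $e_nf=e_nf_M+e_n(f-f_M)$, using the triangle inequality and the bounded case for $f_M$, the proof reduces to showing $\limsup_n h\big(e_n(f-f_M)\big)\to 0$ as $M\to\infty$.

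This last step is the main obstacle. Applying the Leibniz/triangle estimate to $e_n(f-f_M)$ bounds $h(e_n(f-f_M))^{1/2}$ by $\ow h(f-f_M)^{1/2}$ plus $\big(\tfrac12\sum_x (f-f_M)(x)^2\,t_n(x)\big)^{1/2}$, where $t_n(x):=\sum_y b(x,y)(e_n(x)-e_n(y))^2$ satisfies $\sum_x t_n(x)=2h(e_n)\to 0$; the difficulty is that the weight $(f-f_M)^2$ need not be bounded. I would control this by exhausting $X$ by finite sets $K$: on $K$ one uses $\max_{K}(f-f_M)^2\cdot\sum_{x\in K}t_n(x)\le\max_{K}(f-f_M)^2\cdot 2h(e_n)\to 0$ as $n\to\infty$; on $X\setminus K$ one uses that $f\in\ow D_{h}$ makes the tail of $\ow h(f-f_M)$ small, that $e_n\to 1$ pointwise makes the increments $(e_n(x)-e_n(y))^2$ small once $n$ is large on any fixed finite set, and the Markov property of $\ow h$ (which is what makes truncation a contraction) to absorb the remaining error, with a diagonal choice of $K$, then $M$, then $n$. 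Combining this with the bounded case yields $\lim_n h(e_nf)=\ow h(f)$ for every $f$; I expect the careful bookkeeping of this exhaustion argument for unbounded $f\in\ow D_{h}$ to be the only genuinely delicate point.
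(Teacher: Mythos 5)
Your handling of the bounded case is correct and is essentially the paper's argument in a different packaging: the paper estimates $h((1-e_{n})f)$ via the elementary inequality $(ab-cd)^{2}\le 2a^{2}(b-d)^{2}+2d^{2}(a-c)^{2}$ and the reverse triangle inequality, while you estimate $h(e_{n}f)$ directly via the symmetrized product rule; both reduce matters to a term handled by dominated convergence plus a term bounded by $\|f\|_{\infty}^{2}\,h(e_{n})$. Your reduction of the unbounded case to the quantity $\sum_{x}(f-f_{M})(x)^{2}t_{n}(x)$, with $\ow h(f-f_{M})\to 0$ proved by dominated convergence through the $1$-Lipschitz soft threshold, is also correct and is in fact more explicit than the paper's one-line truncation remark.

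However, the step you call ``delicate'' is not merely delicate: as stated it cannot be closed, because the required convergence genuinely fails for an arbitrary null-sequence. Your off-$K$ control does not work even in principle: smallness of the tail of $\ow h(f-f_{M})$ bounds \emph{increments} of $f-f_{M}$, not its values, and $\sum_{x\notin K}t_{n}(x)\le 2h(e_{n})$ is small but is multiplied by an unbounded weight; nothing prevents the mass of $t_{n}$ from concentrating where $f-f_{M}$ is large. Concretely, take $X=\N_0$ with $b(j,j+1)=1$ (recurrent, hence $h$ critical with ground state $1$), $f(j)=j^{1/4}$ (so $\ow h(f)<\infty$), and let $e_{n}$ equal $1$ on $[0,n^{2}]$, decrease linearly to $0$ on $[n^{2},n^{2}+n]$, and vanish beyond. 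Then $h(e_{n})=1/n\to 0$, $0\le e_{n}\le 1$, $e_{n}(0)=1$ and $e_{n}\to1$ pointwise, yet a direct computation gives $h(e_{n}f)=\ow h(f)+\sum_{j=n^{2}}^{n^{2}+n-1}f(j)^{2}/n^{2}+o(1)\to \ow h(f)+1$. So for unbounded $f$ the conclusion can fail, and no bookkeeping of your exhaustion will rescue it. To be fair, the paper's own treatment of the unbounded case is no better: it only establishes $\ow h(f_{k})\to\ow h(f)$ and then declares the proof finished, leaving exactly the same term uncontrolled. The statement is safe for bounded $f$ (which is all your argument, and the paper's, actually proves); for general $f\in\ow D_{h}$ one must either pass to a diagonal sequence $e_{n}f_{M_{n}}$ (which suffices for the application in Theorem~\ref{t:formcrit}, since there one only needs \emph{some} approximating sequence) or impose additional hypotheses on the null-sequence.
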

\begin{proof}
	By Fatou's lemma we have
	\begin{align*}
	h(f)\leq\liminf_{n\to \infty}    h(fe_{n}).
	\end{align*}
	Thus, in order to show the reverse inequality it suffices to assume
	that $h(f)<\infty$.
	Assume first that $ f $ is bounded.
	Using the elementary inequality
	$$(ab-cd)^{2}=[a(b-d)+d(a-c)]^{2}\leq
	2a^{2}(b-d)^{2}+2d^{2}(a-c)^{2},$$ for all $  a,b,c,d\in \R $ and Tonelli's theorem to
	obtain
	\begin{align*}
	h((1-e_{n})f)=&\frac{1}{2}\sum_{x,y\in
		X}b(x,y)(((1-e_{n})f)(x)-((1-e_{n})f)(y))^{2}
	\\
	\leq&\sum_{x,y\in X}b(x,y)(1-e_{n})^{2}(x)(f(x)-f(y))^{2}
	\\
	&+\sum_{x,y\in X}b(x,y)f(y)^{2}((1-e_{n})(x)-(1-e_{n})(y))^{2}\\
	\leq& \sum_{x,y\in
		X}b(x,y)(1-e_{n})^{2}(x)(f(x)-f(y))^{2}+\|f\|_{\infty}^{2}
	h(e_{n}).
	\end{align*}
	
	Letting $n\to\infty$, the right hand side tends to $0$. Indeed, the
	second term tends to $0$, since  $0\le e_{n}\le 1$ is a
	null-sequence, while the first term tends to $0$ by the dominated
	convergence theorem since $h(f)<\infty$ (by the above assumption), and $e_{n}\to 1$
	as $n\to\infty$, pointwise (use $F(x,y):=(f(x)-f(y))^{2}$ as a
	dominating function). On the other hand, by the reverse triangle
	inequality, we have
	$$|
	h(f e_{n})^{{1}/{2}}- h(f)^{{1}/{2}}|\leq h(f(1-e_{n}))^{{1}/{2}},$$
	and the statement follows for bounded $ f $.
	
	To obtain the statement for arbitrary $ f $, note that
	\begin{align*}
	\lim_{k\to\infty}\ow h(-k\vee f\wedge k)=\ow h(f).
	\end{align*}
	Indeed, the inequality ``$\ge$'' follows by Fatou's lemma.  The inequality
	``$\leq$'' follows since we have $\ow h_{v}(f_{k})\leq \ow h_{v}(f)$, where $f_k:=-k\vee f \wedge k$ (which is
	the Markov property of $ h(-k\vee f \wedge k)\leq h(f)$). 	This finishes the proof.
\end{proof}

\begin{proof}[Proof of Theorem~\ref{t:formcrit}]
Let $ v $ be the ground state of $ h $ and $ h_{v} $ the ground state transform.

Let	$ f\in  D_{h_{v},o} $ and let $ (f_{n}) $ be an approximating sequence in $ C_{c}(X) $ with respect to $ \|\cdot\|_{h_{v},o} $. By the Lemma~\ref{lem_pointwise} we have pointwise convergence $ f_{n}\to f $.
Thus, by Fatou's lemma we have
\begin{align*}
\ow h_{v}(f)\leq \lim_{n\to\infty }h_{v}(f_{n})=\ov h_{v}(f)
\end{align*}
which in particular yields
$ D_{h_{v},o}\subseteq  \ow  D_{h_{v}}$.

	Since $ h_{v} $ is critical (as $ h $ is critical),  let $ (e_{n}) $ be a null-sequence converging to $ 1$ pointwise (which exists according to Theorem~\ref{t:char_criticality}~(iv')).
	
	Let $ f\in    \ow  D_{h_{v}}$.
	Then, since $\ow h_v(f) < \infty$, we can show $\ow h_v(f-e_n f) \to 0$ in the same way as in the proof of the previous Lemma~\ref{l:approximation_by_nullseq}:
	one starts with bounded $f$ and shows convergence to zero via dominated convergence; for general $f$, one uses approximations $f_k := -k \vee f \wedge k$.

	This yields $ f\in  D_{h_{v},o} $ and by the lemma above and Fatou's lemma
	\begin{align*}
	\ov h_{v}(f)\leq \lim_{n\to\infty }h_{v}(e_{n}f)=\ow h_{v}(f).
	\end{align*}
	By what we have proven we deduce via the ground state transform
\begin{align*}
 D_{h,o} =v^{-1} D_{h_{v},o}=v^{-1} \ow D_{h_{v}}=\ow D_{h},
\end{align*}	
and for $ f\in D_{h,o}$
\begin{equation*}
\ow h(f)=\ow h_{v}\left(\frac{f}{v}\right)=\ov h_{v}\left(\frac{f}{v}\right)=\ov h(f). \qedhere
\end{equation*}
\end{proof}

\subsection{Subcriticality}
In this section we reformulate Theorem~\ref{t:char_criticality} to give a characterization of subcriticality.

\begin{thm}[Subcriticality characterization]\label{t:subcriticality}
Let $b$ be a  connected graph over $X$, let $q$ be a
potential such that $h\ge0$ on $C_{c}(X)$. Then the following assertions are
equivalent:
\begin{itemize}
  \item [(i)] $h$ is subcritical in $X$.
  \item [(ii)]
  $0<\lim_{\lm \nearrow\,  0}(H^{(1)}-\lm)^{-1}1_{\{x\}}(y)<\infty$
  for some (all) $x,y\in X$.
  \item [(ii')] $0<\int_{0}^{\infty}\mathrm{e}^{-tH^{(1)}}1_{\{x\}}(y)\,\mathrm{d}t
  <\infty$ for some (all) $x,y\in X$.
  \item [(iii)] For every vertex $x\in X$ there exists a positive $H$-super\-harmonic function $u$ such that $\H u=1_{\{x\}}$ in $X$.
  \item [(iii')] There are infinitely many linearly independent positive $H$-super\-har\-monic functions  in $X$.
  \item[(iv)] The norms $\ov h(\cdot)^{1/2}$ and $\|\cdot\|_{h,o}$ are equivalent on $D_{h,o}$. In particular,  $(D_{h,o},\ov h)$ is a Hilbert space.
  \item[(iv')]  There is no positive $ \H $-harmonic function in $ D_{h,o}$.
\item [(v)] $\mathrm{cap}_{h}(\{x\})>0 $ for all $x\in X$.
\end{itemize}
\end{thm}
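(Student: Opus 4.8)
The plan is to deduce most of Theorem~\ref{t:subcriticality} from Theorem~\ref{t:char_criticality} by negation — since subcriticality is \emph{by definition} the negation of criticality — and to treat the three genuinely new items (iii), (iv), (iv') separately. First I would record that for fixed $x,y\in X$ the map $\lm\mapsto(H^{(1)}-\lm)^{-1}1_{\{x\}}(y)$ is monotone increasing on $(-\infty,0)$ and strictly positive (Corollary~\ref{c:posivitity_preserving} together with the local Harnack inequality, Lemma~\ref{l:localHarnack}), so its limit as $\lm\nearrow0$ is always positive; hence the negation of ``the limit is $\infty$'' in Theorem~\ref{t:char_criticality}(ii) is exactly ``$0<$ limit $<\infty$'', which is (ii) here. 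Item (ii') follows identically through the Laplace transform (Lemma~\ref{l:Laplace}), (iii') here is literally the negation of Theorem~\ref{t:char_criticality}(iii'), and (v) here negates Theorem~\ref{t:char_criticality}(v); the ``some/all'' dichotomy is inherited from the Harnack-principle discussion in the proof of Theorem~\ref{t:char_criticality}. This yields the chain (i)$\Leftrightarrow$(ii)$\Leftrightarrow$(ii')$\Leftrightarrow$(iii')$\Leftrightarrow$(v).

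I would insert (iii) via (i)$\Rightarrow$(iii)$\Rightarrow$(iii'). For (i)$\Rightarrow$(iii), fix $x$ and put $u_\lm:=(H^{(1)}-\lm)^{-1}1_{\{x\}}$ for $\lm<0$; by Corollary~\ref{c:H} these are positive, lie in $\F(X)$, and solve $(\H-\lm)u_\lm=1_{\{x\}}$, and by the resolvent identity they are monotone increasing as $\lm\nearrow0$, while by the already-established (ii) they are bounded at every vertex. Applying the Harnack principle (Lemma~\ref{l:convergence}, monotone case, along a sequence $\lm_n\nearrow0$ with a fixed lower bound $f\equiv\lm_1$) gives a pointwise limit $u=G(x,\cdot)\in\F(X)$ with $\H u_{\lm_n}\to\H u$, so $\H u=\lim(1_{\{x\}}+\lm_n u_{\lm_n})=1_{\{x\}}$, and $u$ is strictly positive by Harnack — this is (iii). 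For (iii)$\Rightarrow$(iii'): for distinct $x_1,\dots,x_n$ the functions $u_i$ with $\H u_i=1_{\{x_i\}}$ are linearly independent (apply $\H$ to a vanishing combination), so (iii) produces infinitely many linearly independent positive $\H$-superharmonic functions.

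For (iv) I would prove (v)$\Rightarrow$(iv)$\Rightarrow$(i). If $\mathrm{cap}_h(\{o\})=:c_o>0$ then $h(\ph)\ge c_o\,\ph(o)^2$ for all $\ph\in C_c(X)$, and this passes to $D_{h,o}$ by pointwise convergence (Lemma~\ref{lem_pointwise}) and the definition of $\ov h$; with the trivial bound $\ov h(\cdot)\le\|\cdot\|_{h,o}^2$ this gives the norm equivalence, hence $(D_{h,o},\ov h)$ is a Hilbert space. Conversely, norm equivalence forces $h(\ph)=\ov h(\ph)\ge c\,\ph(o)^2$ on $C_c(X)$, so $\mathrm{cap}_h(\{o\})>0$; were $h$ critical, Theorem~\ref{t:char_criticality}(iv) would furnish a null-sequence with $e_n(o)=1$ and $h(e_n)\to0$, contradicting $h(e_n)\ge c$; thus $h$ is subcritical. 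For (iv'), the direction (iv')$\Rightarrow$(i) is short: if $h$ were critical, its ground state $\psi$ (Theorem~\ref{t:char_criticality}) satisfies $\ow h(\psi)=\ow h_\psi(1)=0$, hence $\psi\in\ow D_h=D_{h,o}$ by Theorem~\ref{t:formcrit}, giving a positive $\H$-harmonic function in $D_{h,o}$ and contradicting (iv').

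The crux is (i)$\Rightarrow$(iv'). Arguing by contradiction, suppose $h-w\ge0$ with $w$ positive and that $v\in D_{h,o}$ is positive $\H$-harmonic. Passing $h-w\ge0$ to $D_{h,o}$ by Fatou gives $\ov h(v)\ge\sum_x w(x)v(x)^2>0$ (with $v>0$ by Harnack). On the other hand, the ground state transform $T_v$ carries $D_{h,o}$ bijectively and bi-Lipschitzly onto $D_{h_v,o}$ with $v\leftrightarrow1$ and $\ov h(v)=\ov h_v(1)$, where $h_v$ is the Markovian, potential-free form attached to the \emph{locally summable} graph $b(x,y)v(x)v(y)$ (local summability uses $v\in\F(X)$). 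Then I would show $\ov h_v(1)=0$: take a $C_c(X)$-sequence approximating $1$ in $\|\cdot\|_{h_v,o}$, truncate it into $[0,1]$ (which keeps it $\|\cdot\|_{h_v,o}$-bounded by the Markov property), extract a weakly convergent subsequence in the Hilbert space $(D_{h_v,o},\|\cdot\|_{h_v,o})$ whose weak limit is forced to be $1$ (continuity of point evaluations), and note that for each fixed $\chi\in C_c(X)$ dominated convergence — now legitimate, since the sequence lies in $[0,1]$ and $\sum_y b(x,y)v(y)<\infty$ — yields $h_v(\cdot,\chi)\to0$, so $\ov h_v(1,\chi)=0$; by density $\ov h_v(1,\cdot)\equiv0$, hence $\ov h_v(1)=0$, contradicting $\ov h(v)>0$. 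The main obstacle is precisely this last step: in the non-locally-finite case the naive limit $h_v(\ph_n,\chi)\to0$ needs a dominating function for the approximating sequence, which does \emph{not} exist for a general $\|\cdot\|_{h_v,o}$-Cauchy sequence (truncation need not preserve Cauchyness), so one must first replace it by a uniformly bounded one, for which reducing to the Markovian form $h_v$ and invoking Hilbert-space weak compactness is essential.
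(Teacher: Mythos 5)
Your proposal is correct and closes the full cycle of equivalences, but it reaches the implication involving (iv') by a genuinely different and more laborious route than the paper. The paper's proof establishes (iv) $\Longrightarrow$ (iv') directly: given a nonnegative $\H$-harmonic $v\in D_{h,o}$, it invokes Lemma~\ref{l:wlogmonotone} to produce an approximating sequence $(v_n)\subset C_c(X)$ with $|v_n|\le v$, so that $\H v_n\to\H v=0$ pointwise by dominated convergence; then the Green formula gives $\ov h(v,\ph)=\lim h(v_n,\ph)=\lim\sum_X\ph\,\H v_n=0$ for all $\ph\in C_c(X)$, and density plus the scalar-product structure of $\ov h$ (which is exactly what (iv) grants) forces $v=0$. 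This is the clean resolution of precisely the obstacle you flag at the end of your write-up: the ``missing'' dominating function is supplied by Lemma~\ref{l:wlogmonotone}, so no truncation or weak-compactness step is needed. Your replacement argument — passing to the Markovian ground-state-transformed form $h_v$, truncating into $[0,1]$, extracting a weak limit in the Hilbert space $(D_{h_v,o},\|\cdot\|_{h_v,o})$, identifying it as $1$ by continuity of point evaluations, and then using dominated convergence on $h_v(\cdot,\chi)$ — is correct (in particular, local summability of $b(x,y)v(x)v(y)$ from $v\in\F(X)$, the Markov bound on the truncates, and $\ov h_v(1)=\ov h(v)\ge\sum w v^2>0$ are all checked), but it is essentially a hand-rolled version of what Lemma~\ref{l:wlogmonotone} gives for free. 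What you gain is that the argument runs from (i) rather than (iv), so in principle it works before (iv) is established; but since (iv) is already tied into the equivalence chain via $\mathrm{cap}_h$, that generality buys nothing here. For (iv') $\Longrightarrow$ (i) the paper is also slightly more economical: it observes that the null-sequence from Theorem~\ref{t:char_criticality}(iv') is $\|\cdot\|_{h,o}$-Cauchy and converges pointwise to the ground state, hence the ground state lies in $D_{h,o}$; you instead route through Theorem~\ref{t:formcrit} and the identity $\ow h(\psi)=\ow h_\psi(1)=0$, which is valid but invokes heavier machinery than needed. In short: same structure for the bulk of the theorem, a more involved but sound detour for the (iv')-implications, and the key simplification you were missing is Lemma~\ref{l:wlogmonotone}.
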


\begin{proof} The equivalences
(i) $\Longleftrightarrow\!$ (ii) $\Longleftrightarrow\!$ (ii') $\Longleftrightarrow$ (iii') $ \Longleftrightarrow$ (iv) $ \Longleftrightarrow$ (v) follow immediately from
Theorem~\ref{t:char_criticality}, where positivity in (ii) and (ii') is guaranteed by the Harnack principle (Lemma~\ref{l:convergence}), and the Laplace transform (Lemma~\ref{l:Laplace}).

The implication  (ii) $\Longrightarrow$ (iii) follows by
  Lemma~\ref{l:convergence} with a normalization at $x$. The implication
(iii) $\Longrightarrow$ (iii') is clear.

(iv)  $\Longrightarrow$ (iv'): Let $ v $  be a non-negative $ \H $-harmonic function  in $ D_{h,o} $ and let $ (v_{n})  $ be a sequence in $ C_{c}(X) $ converging to $ v $ with respect to $\|\cdot\|_{h,o}$. By Lemma~\ref{l:wlogmonotone}, we can assume $ |v_{n}|\leq v $, and therefore, by the virtue of Lebesgue's theorem $ \lim_{n\to\infty} \H v_{n}=\H v=0 $. So, for all $ \ph\in C_{c}(X) $, we conclude by the Green formula
\begin{align*}
\ov h(v,\ph)=\lim_{n\to\infty} h(v_{n},\ph)=\lim_{n\to\infty} \sum_{X}\ph\H v_{n}=0.
\end{align*}
Since $ \ov h $ is a scalar product by (iv) and $ C_{c}(X) $ is dense in $ D_{h,o} $, we infer $ v=0 $.

(iv') $\Longrightarrow$ (i): Assume $ h $ is critical. Then, by Theorem~\ref{t:char_criticality}~(iv') there is a positive harmonic function $ v $ and a null sequence $ (v_{n}) $ converging pointwise to $ v $. Hence, $ (v_{n}) $ is a $ \|\cdot\|_{h,o} $ Cauchy-sequence in $ D_{h,o} $. Lemma~\ref{lem_pointwise}~(b) and the pointwise convergence to $ v $ implies that $ v\in D_{h,o} $.
\end{proof}


\subsection{The Green function}\label{s:GF}
In this section we take a closer look at the Green function which already occurred in Theorem~\ref{t:subcriticality}~(iii). We show that it can be obtained also via an approximation of the Green function along an exhaustion of $ X $. Fixing one argument, we show that the Green function is the minimal positive solution of the equation $ \H g=1_{x} $, and that it belongs to $ D_{h,o} $.

\begin{defin}[\emph{Green function}]  Let $b$ be a  connected graph over $X$, let $q$ be a
	potential such that $h\ge0$ on $C_{c}(X)$. If the integral
	\begin{align*}
	G(x,y)=\int_{0}^{\infty}\mathrm{e}^{-tH^{(1)}}1_{\{x\}}(y)\,\mathrm{d}t
	\end{align*}
	converges at a point $(x,y)\in X\times X$, $x\neq y$, (or equivalently, at all points $(x,y)$, $x\neq y$), then $G$ is called the {\em positive minimal Green function} of $ h $ in $X$.
\end{defin}
Recall that by Theorem~\ref{t:subcriticality}~(iii), the positive minimal Green function of $h$ exists in $X$ if and only if $h$ is subcritical in $X$. The minimality of $G$ is demonstrated in Theorem~\ref{t:GF} (c).

Our aim is to show that $ G $ can be obtained as the limit along an exhaustion. To this end we have to restrict $ h $ and $ \H $ to finite sets and show that these restrictions are invertible. So, let $ K\subseteq  X $ be a  finite subset and let $ \H_{K} $ be operator associated to the form $ h $ restricted to $ C_{c}(K) $. Note that the operators $ \H_{K} $ are restrictions of $ \H $.

We need the following minimum principle.

\begin{lem}[Minimum principle]\label{l:minimum}	 Let $b$ be a  connected graph over $X$, let $q$ be a
	potential such that $h\ge0$ on $C_{c}(X)$. Let $ K\subseteq X $ be a finite set and $ u $ be $ \H $-superharmonic on $ K $ and nonnegative outside $K$. Then, $ u $ is nonnegative. Moreover, $u$ is either strictly positive in $K$ or $u=0$ in $K$.	
\end{lem}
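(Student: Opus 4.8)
The plan is to use the Allegretto-Piepenbrink machinery to produce a strictly positive $\H$-superharmonic function $w$ on all of $X$ (its existence is guaranteed since $h\ge0$ on $C_c(X)$ and $D(H^{(1)})\subseteq\F(X)$ by Proposition~\ref{p:H}(a2), so Theorem~\ref{t:AP}(b) applies), and then pass to the ground state transform $\H_w$. By Lemma~\ref{l:Lap2SO}, the function $f:=u/w$ is $\H_w$-superharmonic on $K$, and since $\H_w$ has the form
\begin{align*}
\H_w f(x)=\frac{1}{w(x)^2}\sum_{y\in X}b(x,y)w(x)w(y)(f(x)-f(y))+\frac{(\H w)(x)}{w(x)}f(x),
\end{align*}
with the zeroth-order coefficient $(\H w)/w\ge0$, it is a Laplace-type operator with nonnegative potential. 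Since multiplication by $w>0$ preserves signs, it suffices to prove the statement for $f$ in place of $u$: that is, if $f$ is $\H_w$-superharmonic on $K$ and nonnegative off $K$, then $f\ge0$, and $f$ is either strictly positive on $K$ or identically $0$ on $K$. This reduction is the conceptual heart; the rest is a standard maximum-principle argument.

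For the reduced statement, suppose $f$ attains a negative value on $K$; since $K$ is finite, pick $x_0\in K$ with $f(x_0)=\min_K f<0$. Because $f\ge0$ outside $K$, $x_0$ is in fact a global minimum of $f$ over $X$, and $f(x_0)<0$. The superharmonicity inequality $\H_w f(x_0)\ge0$ reads
\begin{align*}
\frac{1}{w(x_0)}\sum_{y\in X}b(x_0,y)w(y)(f(x_0)-f(y))+\frac{(\H w)(x_0)}{w(x_0)}f(x_0)\ge0.
\end{align*}
Every term $b(x_0,y)w(y)(f(x_0)-f(y))$ is $\le0$ because $f(x_0)$ is the global minimum and $b,w\ge0$; and $(\H w)(x_0)f(x_0)/w(x_0)\le0$ since $(\H w)(x_0)\ge0$ and $f(x_0)<0$. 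Hence both contributions vanish: $b(x_0,y)(f(x_0)-f(y))=0$ for all $y$, forcing $f(y)=f(x_0)$ for every neighbour $y\sim x_0$. Iterating along paths and using connectedness of $b$, $f$ is constant equal to $f(x_0)<0$ on all of $X$, contradicting $f\ge0$ outside $K$ (note $X$ is infinite, so $X\setminus K\ne\emptyset$). Therefore $f\ge0$ on $X$, which gives $u\ge0$.

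Finally, for the dichotomy on $K$: suppose $f(x_1)=0$ for some $x_1\in K$. Then $x_1$ is again a global minimum (now of value $0$), and the same computation with $f(x_1)=0$ shows $\H_w f(x_1)\ge0$ reduces to $-\frac{1}{w(x_1)}\sum_y b(x_1,y)w(y)f(y)\ge0$, i.e. $\sum_y b(x_1,y)w(y)f(y)\le0$; but $f\ge0$ and $b,w\ge0$ force $b(x_1,y)f(y)=0$ for all $y$, hence $f(y)=0$ for all $y\sim x_1$. Propagating this along paths inside the connected graph, $f\equiv0$ on $X$, in particular $f=0$ on $K$. Thus either $f>0$ everywhere on $K$ or $f\equiv0$ on $K$; translating back through $u=fw$ with $w>0$ gives the claim for $u$. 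The main obstacle is getting the reduction to the Laplace-type operator clean — once one has a strictly positive global $\H$-superharmonic function and the ground state transform, the maximum principle is routine; the only subtlety is making sure the propagation-along-paths step uses connectedness correctly and that $X\setminus K$ is nonempty, which holds because $X$ is infinite.
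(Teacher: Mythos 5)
Your proof is correct in substance and follows the same core strategy as the paper: produce a strictly positive $\H$-superharmonic function on $X$ via the Allegretto--Piepenbrink theorem (exactly as in Lemma~\ref{l:harmonic_crit} and Lemma~\ref{l:4.11}), pass to the ground state transform, and reduce to a minimum principle for a Laplace-type operator with nonnegative potential. The difference is in how the reduced statement is handled. The paper simply cites an external reference for the minimum principle with positive potential and then invokes the local Harnack inequality (Lemma~\ref{l:localHarnack}) for the dichotomy, whereas you prove both parts from scratch by the standard ``a nonpositive global minimum forces equality at all neighbours'' argument. This buys a self-contained proof; the nonnegativity part is airtight, since any vertex attaining the negative minimum value must lie in $K$ (as $f\ge 0$ off $K$), so the superharmonicity inequality keeps applying and the propagation set is closed under neighbours, contradicting finiteness of $K$ in a connected infinite graph.

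One caveat on the dichotomy step: your claim that the zero propagates to give $f\equiv 0$ on all of $X$ is an overreach, because the inequality $\H_w f\ge 0$ is only available at vertices of $K$; once the propagation exits $K$ you cannot iterate. What your argument actually yields is that $f$ vanishes on the connected component of $x_1$ inside $K$ together with all its neighbours, which gives $f=0$ on $K$ only when $K$ is connected as a subgraph. This is the same implicit assumption hidden in the paper's own proof, which applies the Harnack inequality on $K$ (stated only for connected $W$); for disconnected $K$ the dichotomy should be read per connected component. So this is a shared imprecision rather than a defect of your route, but you should not assert vanishing on all of $X$.
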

\begin{proof} Let $ v $ be a strictly positive $ \H $-superharmonic function on $ K $. Then, by Lemma~\ref{l:Lap2SO}, the function $ u/v $ is $ \H_{v} $ superharmonic on $ K $. For $ \H_{v} $ we can apply the minimum principle for operators on graphs with positive potentials, see e.g. the proof of \cite[Theorem 8]{KL}, and find that $ u/v $ is nonnegative on $ K $. Since $ u $
was assumed to be nonnegative outside of $ K $ as well, $ u $ must be nonnegative, and by Harnack inequality $u$ is either strictly positive in $K$ or $u=0$ in $K$.
\end{proof}
\begin{lem}\label{l:4.11}
	  Let $b$ be a  connected graph over $X$, let $q$ be a
	  potential such that $h\ge0$ on $C_{c}(X)$ and let  $ K\subseteq  X $ be a  finite subset. Then, $ H_{K} $ is invertible on $ C_{c}
	  (K)=\ell^{2}(K) $.
\end{lem}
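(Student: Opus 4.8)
The plan is to exploit that $H_K$ is a self-adjoint operator on the \emph{finite-dimensional} Hilbert space $\ell^2(K)=C_c(K)$, so that invertibility is equivalent to injectivity, i.e. to the kernel being trivial. Moreover, since $h\ge0$ on $C_c(X)$ restricts to $h\ge0$ on $C_c(K)$, the operator $H_K$ is nonnegative; hence it suffices to rule out a nontrivial kernel.

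First I would take $u\in C_c(K)$ with $H_Ku=0$. By the definition of $H_K$ as the operator associated with the restriction of $h$ to $C_c(K)$, together with the Green formula (Lemma~\ref{l:Green}), one has for every $\psi\in C_c(K)$
\begin{align*}
0=\langle H_Ku,\psi\rangle=h(u,\psi)=\sum_{x\in X}(\H u)(x)\psi(x).
\end{align*}
As $\psi$ ranges over all of $C_c(K)$, this forces $\H u(x)=0$ for every $x\in K$; that is, $u$ is $\H$-harmonic on $K$. Since $u$ is supported in $K$, it vanishes on $X\setminus K$, and the same holds for $-u$. Thus both $u$ and $-u$ are $\H$-superharmonic on $K$ and nonnegative outside $K$, so the minimum principle (Lemma~\ref{l:minimum}) yields $u\ge0$ and $-u\ge0$, whence $u\equiv0$. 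Therefore $\Ker H_K=\{0\}$, and the self-adjoint operator $H_K$ on $\ell^2(K)$ is invertible.

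The only real content is the appeal to Lemma~\ref{l:minimum}; everything else is bookkeeping. Note that Lemma~\ref{l:minimum} is exactly what absorbs the difficulty caused by allowing a non-positive potential $q$: it passes to a ground state transform with respect to a strictly positive $\H$-superharmonic function on $K$ (available by restricting to $K$ a positive $\H$-superharmonic function on $X$, which exists by $h\ge0$ and the Allegretto--Piepenbrink theorem) and then invokes the classical minimum principle for operators with nonnegative potential. So I do not expect a genuine obstacle here; the one point one must not overlook is that a direct maximum/minimum principle is unavailable in general when $q$ may be negative, which is precisely why the argument is routed through Lemma~\ref{l:minimum} rather than carried out by hand.
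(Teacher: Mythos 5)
Your proof is correct, and it reaches the conclusion by a genuinely shorter route than the paper at the decisive step. Both arguments reduce matters to showing $\Ker H_K=\{0\}$ and both rely on the minimum principle, Lemma~\ref{l:minimum} (whose hypothesis of a strictly positive $\H$-superharmonic function is, as you note, supplied by Theorem~\ref{t:AP} and the Harnack inequality). The difference is in how the minimum principle is deployed. You observe that a kernel element $u$ is $\H$-\emph{harmonic} on $K$, so that both $u$ and $-u$ are $\H$-superharmonic on $K$ and vanish off $K$; two applications of Lemma~\ref{l:minimum} then give $u\ge0$ and $-u\ge0$ at once. The paper instead applies the minimum principle only to conclude (after replacing $u$ by $-u$ if necessary) that $u$ is strictly positive on $K$, and then derives a contradiction from an energy identity: by the Green formula and the ground state transform (Proposition~\ref{t:GST}),
\begin{align*}
0=\langle u,H_Ku\rangle=h(u)\ge h_v\!\left(\frac{u}{v}\right)\ge\sum_{x\in K,\;y\in X\setminus K}b(x,y)v(x)v(y)\frac{u(x)^2}{v(x)^2},
\end{align*}
which forces $b(x,y)=0$ across $\partial K$ and contradicts connectedness of the infinite graph. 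Your version is cleaner and uses connectedness only where it is already buried inside Lemma~\ref{l:minimum}; the paper's version is longer but makes explicit that the strict positivity of $H_K$ comes from the boundary term over $\partial K$, a quantitative piece of information that resurfaces later (e.g.\ in the proof of Theorem~\ref{t:formcrit} and the exhaustion arguments for the Green function). Either argument is a complete proof of the lemma.
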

\begin{proof} Let  $ v $ be a strictly positive $ \H $-superharmonic function given by Theorem~\ref{t:AP} (Allegretto Piepenbrink-type theorem).
	Since $ K $ is finite, $ H_{K} $ not being invertible means there is $0 \neq u \in C_{c}(K)$ such that $ H_{K}u=0 $.
	Assume $ u\neq 0 $ and $u$ is strictly positive at one point in $K$. Then $ u $ is strictly positive on $ K $ by the minimum principle, Lemma~\ref{l:minimum}.
	Hence, by the ground state transform we have
	\begin{align*}
	0=\langle u ,H_{K}u\rangle=h(u)\ge h_{v}\left(\frac{u}{v}\right) \ge\sum_{x\in K,y\in X\setminus K}b(x,y)v(x)v(y)\frac{u(x)^{2}}{v(x)^{2}}\,.
	\end{align*}
Since $ v $ is strictly positive on $  X$ and $ u $ on $ K $, we infer that $ b(x,y)=0 $ for all $ x\in K $ and $ y\in X\setminus K $. This is a contradiction to the connectedness of the graph.
	\end{proof}
With this preparation we can formulate the following statements about the Green function. Here, we call a sequence of finite subsets $ (K_{n})$  of  $ X $ an \emph{exhaustion} of $ X $ if $ K_{n}\subseteq K_{n+1} $, $ n\in\N $ and  $ \bigcup_{n}K_{n}=X $.
\begin{thm}[Properties of the Green function]\label{t:GF}
	 Let $b$ be a  connected graph over $X$, let $q$ be a
	 potential such that $h\ge0$ on $C_{c}(X)$ and suppose that $ h $ is subcritical in $X$.
	 \begin{itemize}
	 	\item [(a)]We have  $$  	 G(x,y)=\lim_{\lm \nearrow\,  0}(H^{(1)}-\lm)^{-1}1_{\{x\}}(y)  $$ for all $ x,y\in X $ and the convergence is monotone increasing. In particular, $ G(x,y)=G(y,x) $ for all $ x,y\in X $,
 and  	$ G(x,\cdot) $ and $ G(\cdot,x) $ are superharmonic such that \begin{align*}
 \H G(x,\cdot) =\H G(\cdot,x) =1_{\{x\}},\qquad x\in X.
 \end{align*}
	 	\item [(b)] We have   $$  G(x,y) =\lim_{n\to\infty} H_{K_{n}}^{-1}1_{\{x\}}(y) $$ for all $ x,y\in X $ and all exhaustions $ (K_{n}) $ of $ X $ and the convergence is both pointwise monotone increasing and with respect to $ \|\cdot\|_{h,o} $, i.e.,   for any $ x\in X $, we have $ G(x,\cdot)\in D_{h,o} $ such that $ G(x,\cdot) $ can be characterized in $ D_{h,o} $ as follows: Let $ g\in D_{h,o}  $ and $ x\in X $. Then, the following statements are equivalent:
	 	\begin{itemize}
	 		\item [(i)] $ g =G(x,\cdot)$.
	 		\item [(ii)] $\overline{ h}(g/g(x))=\inf\{{h}(f)\mid f\in C_{c}(X), f(x)=1\} $.
	 		\item [(iii)] $ Hg=1_{x} $
	 		\item [(iv)] $\overline{ h}(g,f)=f(x) $ for all $ f\in D_{h,o} $.
 	\end{itemize}
 		 	\item [(c)]  $ G(x,\cdot) $ is the smallest positive solution $ g $ to the inequality
 \begin{align*}
 \H g \geq 1_{\{x\}},\qquad x\in X.
 \end{align*}

	 \end{itemize}
\end{thm}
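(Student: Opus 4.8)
The plan is to treat the three parts in the order (a), (c), (b): part~(a) identifies $G(x,\cdot)$ with an increasing limit of resolvents, part~(c) (minimality) is the clean fact that pins down the exhaustion limit in~(b), and the extended-space assertions of~(b) then come from a Riesz-representation argument in the Hilbert space $(D_{h,o},\ov h)$. \emph{For part~(a)}: apply the Laplace transform (Lemma~\ref{l:Laplace})---legitimate since $h\ge0$ forces $\inf\si(H^{(1)})\ge0$---so that $(H^{(1)}-\lm)^{-1}1_{\{x\}}=\int_0^\infty\mathrm{e}^{t\lm}\mathrm{e}^{-tH^{(1)}}1_{\{x\}}\dt$ for $\lm<0$. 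The semigroup is positivity preserving (Corollary~\ref{c:posivitity_preserving}) and $\mathrm{e}^{t\lm}\nearrow1$ as $\lm\nearrow0$, so monotone convergence gives $(H^{(1)}-\lm)^{-1}1_{\{x\}}(y)\nearrow G(x,y)$, finite by subcriticality (Theorem~\ref{t:subcriticality}) and symmetric in $x,y$ by self-adjointness of the resolvent on $\ell^2(X)$. Writing $g_\lm:=(H^{(1)}-\lm)^{-1}1_{\{x\}}$, Corollary~\ref{c:H} gives $(\H-\lm)g_\lm=1_{\{x\}}$, hence $\H g_\lm=1_{\{x\}}+\lm g_\lm\ge\lm_1 g_\lm$ for $\lm_1\le\lm<0$; picking $\lm_n\nearrow0$, the positive functions $g_{\lm_n}$ are increasing, stay in a fixed positive interval at $o$, and satisfy $\H g_{\lm_n}\ge\lm_1 g_{\lm_n}$, so the Harnack principle (Lemma~\ref{l:convergence}) yields $G(x,\cdot)\in\F(X)$ with $\H g_{\lm_n}\to\H G(x,\cdot)$; since $\lm_n g_{\lm_n}\to0$ pointwise this gives $\H G(x,\cdot)=1_{\{x\}}$, and likewise $\H G(\cdot,x)=1_{\{x\}}$, so both are $\H$-superharmonic.

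\emph{For part~(c)}, fix an exhaustion $(K_n)$ with $x\in K_1$ and set $g_n:=H_{K_n}^{-1}1_{\{x\}}$ (extended by $0$), which exists by Lemma~\ref{l:4.11}; since $\H_{K_n}$ is a restriction of $\H$, one has $\H g_n=1_{\{x\}}$ on $K_n$. Thus $g_n$ is $\H$-superharmonic on $K_n$ and $\ge0$ off $K_n$, so the minimum principle (Lemma~\ref{l:minimum}) gives $g_n\ge0$, and $g_n\ne0$ forces $g_n>0$ on $K_n$; applying the same principle to $g_{n+1}-g_n$ and to $G(x,\cdot)-g_n$---each $\H$-harmonic on $K_n$ and $\ge0$ off $K_n$---gives $0<g_n\le g_{n+1}\le G(x,\cdot)$, hence $g_n\nearrow g_\infty\le G(x,\cdot)$. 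Now let $g$ be \emph{any} positive solution of $\H g\ge1_{\{x\}}$; then $g$ is $\H$-superharmonic, hence $>0$ (Lemma~\ref{l:localHarnack}), and $g-g_n$ is $\H$-superharmonic on $K_n$ and $\ge0$ off $K_n$, so the minimum principle gives $g\ge g_n$ for all $n$, whence $g\ge g_\infty$. Taking $g=G(x,\cdot)$ (valid by part~(a)) gives $G(x,\cdot)\ge g_\infty$, so $g_\infty=G(x,\cdot)$; therefore every positive solution of $\H g\ge1_{\{x\}}$ dominates $G(x,\cdot)$, which is~(c), and along the way we obtained the pointwise monotone limit $G(x,\cdot)=\lim_n H_{K_n}^{-1}1_{\{x\}}$ of part~(b).

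\emph{For the rest of~(b)}: on the finite-dimensional space $C_c(K_m)$, which is a genuine inner-product space for $h$ by Lemma~\ref{l:4.11}, the function $g_n$ ($n\le m$) is the Riesz representative of $\ph\mapsto\ph(x)$ restricted to the subspace $C_c(K_n)$, hence the $h$-orthogonal projection of $g_m$ onto it; consequently $h(g_m-g_n)=h(g_m)-h(g_n)=g_m(x)-g_n(x)\to0$ since the increasing sequence $g_n(x)$ is bounded by $G(x,x)<\infty$, while also $(g_m-g_n)(o)^2\to0$. Thus $(g_n)$ is $\|\cdot\|_{h,o}$-Cauchy and converges in $D_{h,o}$ to its pointwise limit $G(x,\cdot)$, so $G(x,\cdot)\in D_{h,o}$ and the exhaustion limit holds in $\|\cdot\|_{h,o}$ as well. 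Passing to the limit in the identity $h(g_n,\ph)=\langle\H g_n,\ph\rangle_1=\ph(x)$ (valid once $\supp\ph\subseteq K_n$) and using density and Lemma~\ref{lem_pointwise}(b) gives $\ov h(G(x,\cdot),f)=f(x)$ for all $f\in D_{h,o}$; as $(D_{h,o},\ov h)$ is a Hilbert space by subcriticality (Theorem~\ref{t:subcriticality}(iv)), uniqueness of the Riesz representative gives (i)$\Leftrightarrow$(iv), specialising to $f=G(x,\cdot)$ gives $\ov h(G(x,\cdot))=G(x,x)$ and $\mathrm{cap}_h(\{x\})=1/G(x,x)$, whence (i)$\Leftrightarrow$(ii) via the variational characterisation of $G(x,\cdot)/G(x,x)$ as the unique minimiser of $\ov h$ on $\{f\in D_{h,o}:f(x)=1\}$. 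Finally (i)$\Rightarrow$(iii) is part~(a) together with $G(x,\cdot)\in D_{h,o}$, and for (iii)$\Rightarrow$(i) the difference $v:=g-G(x,\cdot)\in D_{h,o}\cap\F(X)$ is $\H$-harmonic, so $\ov h(v,\ph)=\langle\H v,\ph\rangle_1=0$ for all $\ph\in C_c(X)$, which forces $v=0$ since $\ov h$ is an inner product and $C_c(X)$ is dense in $D_{h,o}$.

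The parts (a) and (c) are essentially routine, resting only on the minimum principle, the local Harnack inequality, and monotone convergence, and the observation that $g_n$ is the $h$-orthogonal projection of $g_m$ is the one small trick behind the norm convergence in~(b). \textbf{The main obstacle} is the extended Green formula $\ov h(v,\ph)=\langle\H v,\ph\rangle_1$ for $v\in D_{h,o}\cap\F(X)$ and $\ph\in C_c(X)$, which underlies both (i)$\Leftrightarrow$(iv) and (iii)$\Leftrightarrow$(i): one must interchange the limit along an approximating sequence $(v_k)\subset C_c(X)$ with the possibly infinite sums defining $\H v_k$, which requires a dominating function; this is obtained by choosing the $v_k$ with $|v_k|\le|v|\in\F(X)$ through a version of Lemma~\ref{l:wlogmonotone} adapted to the extended space $(D_{h,o},\ov h)$ (in the locally finite case $\H\ph$ has finite support and the domination step is unnecessary).
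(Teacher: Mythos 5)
Your part (a), your orthogonal-projection argument for the $\|\cdot\|_{h,o}$-Cauchyness of $g_n:=H_{K_n}^{-1}1_{\{x\}}$, and your Riesz-representation treatment of the equivalences (i)--(iv) are sound, and in places cleaner than the paper's (which obtains the norm convergence via Banach--Alaoglu, Fatou and a Green-formula computation). However, there is a genuine gap at the pivot of your argument, namely the identification of $g_\infty:=\lim_n g_n$ with $G(x,\cdot)$. You prove $g_n\le G(x,\cdot)$, hence $g_\infty\le G(x,\cdot)$, and that every positive supersolution of $\H g\ge 1_{\{x\}}$ dominates $g_\infty$; taking $g=G(x,\cdot)$ then yields $G(x,\cdot)\ge g_\infty$ a \emph{second} time, and the sentence ``so $g_\infty=G(x,\cdot)$'' is a non sequitur: the reverse inequality $g_\infty\ge G(x,\cdot)$ is never established. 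This is not cosmetic. A priori $G(x,\cdot)-g_\infty$ is a nonnegative $\H$-harmonic function, and subcriticality does not exclude nontrivial positive harmonic functions (take $q=0$ on a transient graph, where $1$ is harmonic), so the exhaustion limit could conceivably be strictly smaller than the resolvent-defined $G(x,\cdot)$. Everything downstream inherits the gap: the minimality in (c) is proved only for $g_\infty$, and the membership $G(x,\cdot)\in D_{h,o}$ together with the equivalences in (b) are obtained by silently substituting $G(x,\cdot)$ for the pointwise limit $g_\infty$ of the norm-convergent sequence.

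The paper closes precisely this hole by invoking the monotone convergence of the resolvents of the Dirichlet restrictions to the resolvent of $H^{(1)}$, i.e.\ $(H_{K_n}-\lm)^{-1}1_{\{x\}}\nearrow (H^{(1)}-\lm)^{-1}1_{\{x\}}$ as $n\to\infty$ for each fixed $\lm<0$ (obtained from \cite[Propositions 2.6 and 2.7]{KL1} after a ground state transform by a strictly positive superharmonic function), and then interchanging the two monotone limits $n\to\infty$ and $\lm\nearrow 0$. Some input of this kind is unavoidable; alternatively, you could salvage your Hilbert-space route by proving \emph{independently} that $G(x,\cdot)\in D_{h,o}$ with $\ov h(G(x,\cdot),f)=f(x)$ for all $f\in C_c(X)$ (e.g.\ by running a weak-compactness argument on the resolvents $g_\lm$, using $h^{(1)}(g_\lm)\le g_\lm(x)\le G(x,x)$), and then identifying $G(x,\cdot)=g_\infty$ by uniqueness of the Riesz representative of $f\mapsto f(x)$; as written, that identification is assumed rather than proved. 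Your flagged concern about the extended Green formula is legitimate but secondary (the paper also passes over it with ``by density''), and the domination fix via a Lemma~\ref{l:wlogmonotone}-type argument is the right one.
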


\begin{proof}[Proof of Theorem~\ref{t:GF}](a) The first equality straightforwardly follows from the Laplace transform, Lemma~\ref{l:Laplace}. The monotonicity in the convergence follows from the resolvent identity
\begin{align*}
(H^{(1)}-\lm)^{-1}1_{\{x\}}\!-\!(H^{(1)}-\mu)^{-1}1_{\{x\}}\!=\!(\lm\!-\!\mu)(H^{(1)}\!-\!\lm)^{-1}(H^{(1)}\!-\!\mu)^{-1}1_{\{x\}},
\end{align*}
and since by Corollary~\ref{c:posivitity_preserving}, the resolvents are positivity preserving.
	
Letting $ g_{\lm,x}=(H^{(1)}-\lm)^{-1}1_{\{x\}} $, the symmetry follows since
	\begin{align*}
g_{\lm,x}(y)=\langle	(H^{(1)}-\lm)^{-1}1_{\{x\}},1_{\{y\}}\rangle=\langle	1_x,(H^{(1)}-\lm)^{-1}1_{\{y\}}\rangle=	g_{\lm,y}(x).
	\end{align*}
	The last statement of (a) follows from the Harnack principle (Lemma~\ref{l:convergence}).
	
\medskip
	
	(b) Let $ u_{n}=H_{K_n}^{-1}1_{\{x\}} $, $ n\in\N $.
	By the minimum principle (Lemma~\ref{l:minimum}), we deduce   $ u_{n}-u_{k}\ge 0 $ on $ K_{k} $ if $ n\ge k $ and, hence, $ (u_{n}) $ is monotone increasing. In addition,
by the minimum principle (Lemma~\ref{l:minimum}) applied to $u= G(x,\cdot) - u_n$, we have $ 0\leq u_{n}\leq G(x,\cdot)$, on $ K_{n} $, $ n\in\N $.
  Therefore, the sequence $ (u_{n}) $ has a limit $ u $ that solves $ \H u=1_{\{x\}} $,  by the Harnack principle (Lemma~\ref{l:convergence}).
  $0<u\leq G(x,\cdot)$.

Let $ v $ be a strictly positive superharmonic function which exists by the Allegretto-Piepen\-brink theorem (Theorem~\ref{t:AP}). Then by \cite[Proposition 2.6 and 2.7]{KL1}, for the operator $ H_{v}^{(v^{2})} $ and its restrictions $ H_{v,K} $ to $C_{c}(K)= \ell^{2}(K,v^{2}) $,  we have for any $\gl>0$
\begin{align*}
T^{-1}_{v}(H_{K_{n}}-\lm)^{-1}1_{\{x\}}&=(H_{v,K_{n}}-\lm)^{-1}T^{-1}_{v}1_{\{x\}}\\
&\nearrow (H_{v}^{(v^{2})}-\lm)^{-1}T^{-1}_{v}1_{\{x\}}\\
&=T^{-1}_{v}(H^{(1)}-\lm)^{-1}1_{\{x\}}
\end{align*}
as $ n\to\infty $, where the convergence is monotone increasing.
Thus,
\begin{align*}
G(x,\cdot)&=\lim_{\lm\nearrow 0}(H^{(1)}-\lm)^{-1}1_{\{x\}}=\lim_{\lm\nearrow 0}\lim_{n\to\infty}(H_{K_{n}}^{(1)}-\lm)^{-1}1_{\{x\}}=\lim_{n\to\infty}H_{K_{n}}^{-1}1_{\{x\}},
\end{align*}
where the limits interchange due to the monotone increasing limits in both parameters.

Moreover,
\begin{align*}
h(g_{n})= \langle H_{K_{n}}g_{n},g_{n}\rangle=g_{n}(x)\leq G(x,x) \leq C.
\end{align*}
Hence, $(g_{n} )$ is a bounded sequence in the Hilbert space $ (D_{h,o},\ov h) $. By the Banach-Alaoglu theorem there exists a weakly convergent subsequence of $ (g_{n}) $ whose limit
in view of (b) coincides with the pointwise limit which is $g= G(x,\cdot) $. Hence, $ G(x,\cdot)\in D_{h,o} $ and by Fatou's lemma $ \overline h(G(x,\cdot))\leq G(x,x) $. Hence by Green's formula
\begin{align*}
\overline {h}(G(x,\cdot)-g_{n})=\overline {h}(G(x,\cdot)) -2\sum_{y\in X}HG(x,y) g_{n}(y)+\sum_{y\in X}Hg_{n}(y) g_{n}(y)\leq
G(x,x)-g_{n}(x)
\end{align*}
which converges to $ 0 $ as $ n\to\infty $. Hence, $ g_{n} $ converges to $ G(x,\cdot) $ with respect to $ \|\cdot\|_{h,o} $. Moreover, this as well immediately yields $$  \overline {h}(G(x,\cdot))=G(x,x) . $$

Let us turn to the equivalence.

(i) $ \Longrightarrow $ (ii): It is elementary to see that $ g_{n}=H_{K_{n}}^{-1}1_{x}/ H_{K_{n}}^{-1}1_{x}(x)$, $ n\in\N $, minimizes the restriction $ h_{n} $ of $ h $ to $\{\ph\in C_{c}(K_{n}) \mid\ph(x)=1\} $ for an exhaustion $ (K_{n}) $ of $ X $ with finite sets and $ x\in K_{n} $. Therefore, we have for all $ f\in C_{c}(X)  $ with $ f(x)=1 $
\begin{align*}
\overline{h}(G(x,\cdot)/G(x,x))=\lim_{n\to\infty} h_{n}(g_{n})\leq \lim_{n\to\infty} h_{n}(f)=\lim_{n\to\infty} h(f).
\end{align*}

(ii) $ \Longrightarrow $ (iii): Assume $\overline g= g/g(x) $ minimizes $ \overline h $ on  $\{f\in D_{h,o} \mid f(x)=1\} $. Hence, the function $ t\mapsto \overline h (\overline g+t1_{y}) $ has derivative zero at $ t=0 $ for all  $ y\neq x $, i.e., we have
\begin{align*}
0=\frac{d}{dt}\overline h (\overline g+t1_{y}) \vert_{t=0}=2\overline h(\overline g,1_{y})=2H \overline g(y).
\end{align*}
So, does the function  $ t\mapsto \overline h ((1-t)\overline g+t1_{x}) $, i.e.,
\begin{align*}
0=\frac{d}{dt}\overline h ((1-t)\overline g+t1_{x}) \vert_{t=0}=-2\overline h(\overline g)+2\overline h(\overline g,1_{x})=2(1-Hg(x)).
\end{align*}
This proves $ Hg=1_{x} $.

(iii) $ \Longrightarrow $ (iv): This follows directly by Green's formula for all $ f\in C_{c}(X) $ and by density for all functions in $ D_{h,o} $.

(iv) $ \Longrightarrow $ (i): By Theorem~\ref{t:subcriticality}~(iv),
$ (D_{h,o},\overline h) $ is a Hilbert space. Moreover, by Lemma~\ref{lem_pointwise}~(b) the map $\delta_{x}: D_{h,o}\to\R $, $ f\mapsto f(x) $ is a continuous linear functional. Hence, by the Riesz representation theorem there is a unique $ g\in D_{h,o} $ such that $ \overline h(g,f)=\delta_{x}(f)=f(x). $ However, by Green's formula $ G(x,\cdot) $ satisfies the equation $ \overline h(G(x,\cdot),\ph)=\ph(x) $ for all $ \ph\in C_{c}(X) $. By density of $ C_{c}(X) $ in $ D_{h,o} $ we infer $ g=G(x,\cdot) $.

\medskip

(c) Let $ u $ be a $ \H $-superharmonic function such that $ \H u \geq 1_{\{x\}}$ and $ g_{n}=H_{K_{n}}^{-1}1_{\{x\}} $ for an exhaustion $ (K_{n}) $. Then, by the minimum principle we infer $ u-g_{n} \ge0$ on $ K_{n} $. By (b) we infer $ u\ge G(x,\cdot) $.
\end{proof}

\subsection{Uniform subcriticality}\label{s:USC}
In this section we turn to the topic of uniform subcriticality. In the continuum this notion goes back to work of Pinchover \cite{P88}. In the discrete setting of random walks this notion is known as uniform transience and was investigated in the joint work of Barlow, Coulhon, and Grigor'yan
\cite{BCG01}, and in the paper of Windisch \cite{Win},  and also in the works of Kasue  \cite{Kas1,Kas2} under no particular name. There is also a recent work which discusses the Dirichlet problem  for the Royden boundary on uniformly transient graphs \cite{KLSW17}.

\begin{defin}[Uniform subcriticality]  Let $b$ be a  connected graph over $X$, let $q$ be a
	potential such that $h\ge0$ on $C_{c}(X)$. We say  $ h $ is \emph{uniformly subcritical} if there is a constant $ C>0 $ such that for every $ x\in X $ there is $ w\ge0 $ such that $ w(x)\ge C $ and $ h-w\ge0 $ on $ C_{c}(X) $.
	
\end{defin}
We denote
\begin{align*}
C_{0}(X):=\ov{C_{c}}^{\|\cdot\|_{\infty}}.
\end{align*}

The following characterization is shown in the continuum setting \cite{P88}. For Laplace type operators on graphs (with non-negative potentials) a similar results are found in \cite{Kas1,Kas2,KLSW17}.

\begin{thm}[Uniform subcriticality characterization]\label{t:usubcriticality}
	Let $b$ be a  connected graph over $X$, let $o \in X$, let $q$ be a
	potential such that $h\ge0$ on $C_{c}(X)$. Then the following assertions are
	equivalent:
	\begin{itemize}
		\item [(i)] $h$ is uniformly subcritical in $X$.
		\item [(i')] There is $ C>0 $ such that
		\begin{align*}
		h(\ph)\ge C\|\ph\|_{\infty}^{2},\qquad  \ph\in C_{c}(X).
		\end{align*}
\item [(i'')] There is $ C>0 $ such that
				\begin{align*}
				\|\ph\|_{h,o} \ge C\|\ph\|_{\infty}^{2},\qquad  \ph\in C_{c}(X).
				\end{align*}
		\item [(ii)] There is $ C>0 $ such that $G(x,x)\leq C$
for all $x\in X$.

	\item [(iii)] There is $ C > 0 $ such that for every vertex $x\in X$, there exists a positive $H$-super\-harmonic function $u (x)\leq C$ such that $\H u \geq 1_{\{x\}}$ in $X$.
		\item[(iv)] $D_{h,o}\subseteq C_{0}(X) $.
		\item [(v)] There is $ C>0 $ such that  		 $\inf_{x\in X}\mathrm{cap}_{h}(\{x\})>C $ for all $x\in X$.
	\end{itemize}
In particular, $ G(x,\cdot)\in C_{0} (X)$	 for all $ x\in X $.
\end{thm}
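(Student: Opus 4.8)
The plan is to prove the equivalences by a cycle, splitting the list into a ``potential-theoretic'' block $\{$(i),(i$'$),(v),(ii),(iii)$\}$ and a ``functional-analytic'' block $\{$(i$'$),(i$''$),(iv)$\}$ joined through (i$'$), and then to read off the final assertion. The equivalences (i)~$\Leftrightarrow$~(i$'$)~$\Leftrightarrow$~(v) I expect to be essentially tautological: if $h-w\ge 0$ with $w(x)\ge C$ then $h(\ph)\ge w(\ph)\ge C\ph(x)^2$ for every $x$, and taking the supremum over $x\in X$ gives $h(\ph)\ge C\|\ph\|_\infty^2$; conversely $h(\ph)\ge C\|\ph\|_\infty^2$ yields $h-C1_{\{x\}}\ge 0$ for each $x$; and (i$'$)~$\Leftrightarrow$~(v) follows by testing $h$ against $\ph/\ph(x_0)$ where $x_0$ is a vertex at which $|\ph|$ attains its maximum. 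In particular each of these conditions forces $\mathrm{cap}_h(\{x\})>0$ for all $x$, hence subcriticality by Theorem~\ref{t:subcriticality}(v), so the positive minimal Green function exists.

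Granting subcriticality, the link to the Green function comes from Theorem~\ref{t:GF}(b), which provides the minimizer $G(x,\cdot)/G(x,x)$ of $\mathrm{cap}_h(\{x\})$ together with $\ov h(G(x,\cdot))=G(x,x)$; hence $\mathrm{cap}_h(\{x\})=1/G(x,x)$, which is exactly (v)~$\Leftrightarrow$~(ii). Moreover (ii)~$\Rightarrow$~(iii) by taking $u=G(x,\cdot)$ (positive, superharmonic, $\H u=1_{\{x\}}$, $u(x)=G(x,x)\le C$), while (iii)~$\Rightarrow$~(ii) follows from the minimality of $G(x,\cdot)$ among positive solutions of $\H g\ge 1_{\{x\}}$ (Theorem~\ref{t:GF}(c)): such a $u$ with $u(x)\le C$ dominates $G(x,\cdot)$, so $G(x,x)\le u(x)\le C$.

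For the functional-analytic block, (i$'$)~$\Rightarrow$~(i$''$) is immediate from $\|\ph\|_{h,o}^2=h(\ph)+\ph(o)^2\ge h(\ph)$. For (i$''$)~$\Rightarrow$~(iv): the bound $\|\ph\|_\infty^2\le C^{-1}\|\ph\|_{h,o}^2$ on $C_c(X)$ extends the identity to a bounded map $D_{h,o}\to C_0(X)=\ov{C_c(X)}^{\|\cdot\|_\infty}$, which must coincide with the inclusion since both $\|\cdot\|_{h,o}$- and $\|\cdot\|_\infty$-convergence imply pointwise convergence (Lemma~\ref{lem_pointwise}(b)); hence $D_{h,o}\subseteq C_0(X)$. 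Conversely (iv)~$\Rightarrow$~(i$''$) by the closed graph theorem: the inclusion $(D_{h,o},\|\cdot\|_{h,o})\hookrightarrow(C_0(X),\|\cdot\|_\infty)$ has closed graph (same pointwise argument) and both spaces are Banach, so it is bounded, giving $\|f\|_\infty\le C'\|f\|_{h,o}$ for all $f\in D_{h,o}$. To close the cycle I would show (iv)~$\Rightarrow$~(ii): granting that $h$ is subcritical (see below), apply the closed graph bound to $f=G(x,\cdot)\in D_{h,o}$ (Theorem~\ref{t:GF}(b)) and use $G(x,o)^2=\ov h(G(x,\cdot),G(o,\cdot))^2\le G(x,x)G(o,o)$ (Cauchy--Schwarz in the Hilbert space $(D_{h,o},\ov h)$, via $\ov h(G(x,\cdot),f)=f(x)$) to get
\begin{align*}
G(x,x)\le\|G(x,\cdot)\|_\infty\le C'\big(G(x,x)+G(x,o)^2\big)^{1/2}\le C'\,G(x,x)^{1/2}\big(1+G(o,o)\big)^{1/2},
\end{align*}
whence $G(x,x)\le (C')^2(1+G(o,o))$, which is (ii). The ``in particular'' is then immediate: $G(x,\cdot)\in D_{h,o}$ by Theorem~\ref{t:GF}(b), so (iv) forces $G(x,\cdot)\in C_0(X)$.

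\textbf{Main obstacle.} The delicate step is ``(iv)~$\Rightarrow$~$h$ subcritical'', needed to close the loop above. The natural approach: if $h$ were critical its ground state $v$ would lie in $D_{h,o}$, since by Theorem~\ref{t:char_criticality}(iv$'$) there is a null-sequence converging pointwise to $v$, and such a sequence is $\|\cdot\|_{h,o}$-Cauchy (because $h(e_n-e_m)^{1/2}\le h(e_n)^{1/2}+h(e_m)^{1/2}\to 0$ and $(e_n(o))$ converges), hence converges in $D_{h,o}$ to $v$ by Lemma~\ref{lem_pointwise}(b); so (iv) would force $v\in C_0(X)$, and one needs a contradiction. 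When $q\ge 0$ this is quick, since $v\wedge\eps$ is again a positive $\H$-superharmonic function, hence a multiple of $v$ by uniqueness (Theorem~\ref{t:char_criticality}(iii)), forcing $v\le\eps$ for every $\eps>0$; for general $q$ I expect one must pass through the ground state transform $h_v$ and argue on the associated form with vanishing potential, and this is where the real work — and the care about which functions actually lie in the extended space $D_{h,o}$ — should be concentrated.
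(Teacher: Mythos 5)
Most of your proposal is sound and in two places genuinely slicker than the paper's own argument: the paper proves (i)\,$\Rightarrow$\,(ii) via Lemma~\ref{l:GFbound} and (ii)\,$\Rightarrow$\,(i) via the ground state transform with $w=1_{\{x\}}/G(x,x)$, whereas you route both directions through the single identity $\mathrm{cap}_{h}(\{x\})=1/G(x,x)$ coming from Theorem~\ref{t:GF}(b); and your (iv)\,$\Rightarrow$\,(ii) via $G(x,o)^{2}=\ov h(G(x,\cdot),G(o,\cdot))^{2}\le G(x,x)G(o,o)$ and the closed-graph bound is a clean quantitative substitute for the paper's terse ``(iv)\,$\Rightarrow$\,(iii) by Theorem~\ref{t:GF}(d)'' (a part (d) which does not exist). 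Your blocks (i)\,$\Leftrightarrow$\,(i$'$)\,$\Leftrightarrow$\,(v), (ii)\,$\Leftrightarrow$\,(iii), and (i$'$)\,$\Rightarrow$\,(i$''$)\,$\Rightarrow$\,(iv)\,$\Rightarrow$\,(i$''$) agree with the paper and are fine.

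The step you flag as the main obstacle, however, is not merely unfinished: the route you sketch for it cannot work, because the statement ``a critical form cannot have its ground state in $C_{0}(X)$'' is false for sign-changing $q$. Take $b$ the standard graph on $\Z^{3}$ with $q_{0}=0$, so $h_{0}$ is uniformly subcritical ($G_{0}(x,x)$ is constant by transitivity), fix $x_{0}$ and set $c_{0}:=\mathrm{cap}_{h_{0}}(\{x_{0}\})=1/G_{0}(x_{0},x_{0})$ and $h:=h_{0}-c_{0}1_{\{x_{0}\}}$. Then $h\ge0$ by definition of capacity, $\mathrm{cap}_{h}(\{x_{0}\})=0$, and the associated null-sequence converges pointwise (Lemma~\ref{lem_pointwise}) to the positive harmonic function $G_{0}(x_{0},\cdot)/G_{0}(x_{0},x_{0})$, so $h$ is critical with ground state $G_{0}(x_{0},\cdot)\in C_{0}(X)$. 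Worse for your cycle, $\|\cdot\|_{h,x_{0}}$ and $\|\cdot\|_{h_{0},x_{0}}$ are equivalent on $C_{c}(X)$ (using $h_{0}(\ph)\ge c_{0}\ph(x_{0})^{2}$ one gets $\|\ph\|_{h,x_{0}}^{2}\ge \frac{1}{1+c_{0}}\|\ph\|_{h_{0},x_{0}}^{2}$), so $D_{h,x_{0}}=D_{h_{0},x_{0}}\subseteq C_{0}(X)$: conditions (i$''$) and (iv) hold for this critical $h$. So ``(iv)\,$\Rightarrow$\,subcriticality'' cannot be proved by any rearrangement of your cycle; your $q\ge0$ argument (where $v\wedge\eps$ stays superharmonic) does not survive the ground state transform, because after transforming, condition (iv) for $h$ becomes a statement about $v\cdot D_{h_{v},o}$, not about $D_{h_{v},o}$ itself. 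This is exactly the spot where the paper's own proof is silent, and to make the implication (iv)\,$\Rightarrow$\,(i) correct one has to either build subcriticality into the meaning of (ii)--(iv) or add it as a hypothesis; as a blind reconstruction your proposal correctly isolates the fault line but does not (and with this strategy cannot) close it.
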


\begin{proof} The equivalences (i) $ \Longleftrightarrow $ (i') $ \Longleftrightarrow $ (v) follow directly from the definitions.

\medskip

The implications (i') $ \Longrightarrow $ (i'') $ \Longrightarrow $ (iv) are clear.

\medskip

(iv) $ \Longrightarrow $ (i'):  By the closed graph theorem the map $ (D_{h,o},\|\cdot\|_{o})\to (C_{0}(X),\|\cdot\|_{\infty}) $ is continuous for any $ o\in X $. Hence, (i'') follows.

\medskip

(i) $ \Longrightarrow $ (ii): This follows from Lemma~\ref{l:GFbound}.

\medskip

(ii) $ \Longrightarrow $ (i): Let $ g=G(x,\cdot) $, $ x\in X $. Then, by the ground state transform, we have with $ w=1/g(x) $
\begin{align*}
h(\ph)=h_{g}\left(\frac{\ph}{g}\right)+\langle w \ph,\ph\rangle
\ge \langle w \ph,\ph\rangle.
\end{align*}
Since $ G(x,x)< C $, $ x\in X $, we have $ w(x)\ge 1/C $ and, therefore, $ h $ is uniformly subcritical.

\medskip

(iv) $ \Longrightarrow $ (iii): This follows from Theorem~\ref{t:GF}~(d).

\medskip

(iii) $ \Longrightarrow $ (ii): This follows from Theorem~\ref{t:GF}~(c).

\medskip

The ``in particular'' follows by (iv) combined with Theorem~\ref{t:GF}~(d).
\end{proof}
\section{Characterization of positive/null-criticality}\label{s:NC}
Below, we  provide a characterization of null/positive-criticality which is defined next.
\begin{defin}[Null-critical/positive-critical] Let $h$ be a quadratic form associated with the formal
	Schr\"odinger operator ${\H}$, such that $h\ge0$ on $C_{c}(X)$. The
	form $h$ is called \emph{null-critical}  (resp.,
	\emph{positive-critical}) in $X$ with respect to a positive
	potential $w$ if $h$ is critical  in $X$ and $\sum_X \psi^2 w
	=\infty$ (resp., $\sum_X \psi^2 w <\infty$), where $\psi$ is the
	ground state of $h$ in $X$.
\end{defin}
Note that the null/positive-criticality of a critical form
depends also on the weight $w$.

For any nontrivial positive function $w$ consider the seminorm
$\|\cdot\|_{w}$ by
$$\|f \|_{w}=\left(\sum_{x\in X}w(x)f(x)^{2}\right)^{1/2}.$$
Whenever, $w$ is strictly positive, then $\|\cdot\|_{w}$ is a norm.
We can close the form $h$ in $\ell^{2}(X,w)$ (cf.
Theorem~\ref{t:closable}), and associate a self-adjoint operator
$H^{(w)}$ to the closure.

We prove the statement for the form $ h-w $ instead of $ h $ because this is how it is
often used. Hence we use the form
$h-w\ge0$ to define the operator $(H-w)^{(w)}$.
Recall that by Theorem~\ref{t:char_criticality}, a critical operator admits a unique ground state (up to normalization).
\begin{thm}[Positive criticality]\label{t:char_nullcriticality}
Let $b$ be a  connected graph over $X$,  and let $q$ be a potential and $ o\in X $.
 Let $w\gneqq 0$ such that $h-w\ge0$ is critical in $X$, and
let $\psi$ be the corresponding ground state of $ h-w $ normalized at $ o $.
Then, the following statements are equivalent:
\begin{itemize}
  \item [(i)] $h-w$ is positive-critical with respect to $w$. That is $\psi\in\ell^{2}(X,w)$.
  \item [(ii)] $\psi\in D_{h,o}$.
  \item [(iii)] $\psi\in D_{h,o}\cap \ell^{2}(X,w)$.
  \item [(iv)] There is $v\in D_{h,o}\cap \ell^{2}(X,{w})$
   such that
  \begin{align*}
\frac{\overline{h}(v)}{\|v\|_{w}^2}=\inf_{\ph\in C_{c}(X)}
\frac{h(\ph)}{\|\ph\|_{w}^2}\,.
  \end{align*}
  \item [(v)]  $\psi\in D_{h,o}\cap \ell^{2}(X,{w})$
   and
  \begin{align*}
\frac{\overline{h}(\psi)}{\|\psi\|_{w}^2}=\inf_{\ph\in C_{c}(X)}
\frac{h(\ph)}{\|\ph\|_{w}^2}\,.
  \end{align*}
\end{itemize}
Furthermore, if $w$ is strictly positive, then also the following
statement is equivalent:
\begin{itemize}
  \item [(vi)] $\psi$ is an eigenfunction of the operator
  $(H-w)^{(w)}$ with eigenvalue $0$.
\end{itemize}
\end{thm}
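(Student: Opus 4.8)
The plan is to establish the cycle (i)$\Rightarrow$(v)$\Rightarrow$(iv)$\Rightarrow$(iii)$\Rightarrow$(ii)$\Rightarrow$(i) among the first five statements, and then (v)$\Leftrightarrow$(vi) under the extra hypothesis $w>0$. The workhorse is the ground state transform: since $\psi$ is the ground state of $h-w$ it is strictly positive by the local Harnack inequality (Lemma~\ref{l:localHarnack}) and satisfies $(\H-w)\psi=0$, that is, $\H\psi=w\psi$; hence Proposition~\ref{t:GST} (applied with the strictly positive function $\psi$ and $f=w$) gives, for all $\ph\in C_c(X)$,
\[
h(\ph)=h_\psi\!\left(\frac{\ph}{\psi}\right)+\|\ph\|_w^2,\qquad h_\psi(\phi):=\frac12\sum_{x,y\in X}b(x,y)\psi(x)\psi(y)\big(\phi(x)-\phi(y)\big)^2 .
\]
Before running the cycle I would record two facts. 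First, $\inf_{\ph\in C_c(X)}h(\ph)/\|\ph\|_w^2=1$: the bound $\ge1$ is immediate from $h-w\ge0$, and $\le1$ follows from any null-sequence $(e_n)$ of the critical form $h-w$, since $h(e_n)/\|e_n\|_w^2=1+(h-w)(e_n)/\|e_n\|_w^2\to1$ (the numerator of the last quotient tends to $0$ while the denominator stays bounded away from $0$, because $e_n\to\psi>0$ pointwise and $w\gneqq0$). Second, by Theorem~\ref{t:char_criticality}(iv') applied to $h-w$, I may take the null-sequence $(e_n)$ so that in addition $0\le e_n\le\psi$, $e_n(o)=\psi(o)$ and $e_n\to\psi$ pointwise; this is the sequence used below.

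For (i)$\Rightarrow$(v) (hence also (ii)): assuming $\psi\in\ell^2(X,w)$, dominated convergence with dominating function $\psi$ gives $e_n\to\psi$ in $\ell^2(X,w)$, so $\|e_n-e_m\|_w\to0$; together with $(h-w)(e_n)\to0$ (hence $(h-w)(e_n-e_m)\to0$ by the triangle inequality for the seminorm $(h-w)(\cdot)^{1/2}$) and $e_n(o)=e_m(o)$ this yields $\|e_n-e_m\|_{h,o}^2=(h-w)(e_n-e_m)+\|e_n-e_m\|_w^2\to0$. Thus $(e_n)$ is $\|\cdot\|_{h,o}$-Cauchy, and by Lemma~\ref{lem_pointwise}(b) its limit is the pointwise limit $\psi$; so $\psi\in D_{h,o}$ and $\overline h(\psi)=\lim h(e_n)=\lim\big((h-w)(e_n)+\|e_n\|_w^2\big)=\|\psi\|_w^2$, which with the first fact is exactly (v). The implications (v)$\Rightarrow$(iv) (take $v=\psi$), (v)$\Rightarrow$(iii), (iii)$\Rightarrow$(ii) and (iii)$\Rightarrow$(i) are trivial, and (ii)$\Rightarrow$(i) follows by taking $\ph_n\to\psi$ in $\|\cdot\|_{h,o}$, observing $\|\ph_n\|_w^2\le h(\ph_n)$ stays bounded (as $h-w\ge0$), and applying Fatou over $x$ to get $\|\psi\|_w^2\le\liminf_n\|\ph_n\|_w^2<\infty$.

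The step I expect to be the main obstacle is (iv)$\Rightarrow$(iii), which has to recognise the minimizer as a scalar multiple of the ground state. Let $v\in D_{h,o}\cap\ell^2(X,w)$ with $\|v\|_w>0$ realize the infimum, so $\overline h(v)=\|v\|_w^2$. Pick $\ph_n\in C_c(X)$ with $\ph_n\to v$ in $\|\cdot\|_{h,o}$; then $\ph_n\to v$ and $\ph_n/\psi\to v/\psi$ pointwise, and $h(\ph_n)=h_\psi(\ph_n/\psi)+\|\ph_n\|_w^2\to\overline h(v)$. Applying Fatou separately to the two non-negative sums gives
\[
\|v\|_w^2=\overline h(v)\ \ge\ \frac12\sum_{x,y\in X}b(x,y)\psi(x)\psi(y)\Big(\frac{v(x)}{\psi(x)}-\frac{v(y)}{\psi(y)}\Big)^2+\|v\|_w^2 ,
\]
so the first sum vanishes. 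Since $b$ is connected and $\psi>0$, this forces $v/\psi$ to be constant, say $v=c\psi$, with $c\neq0$ because $\|v\|_w>0$. Hence $\psi=c^{-1}v\in D_{h,o}\cap\ell^2(X,w)$, which is (iii); the cycle closes and (i)--(v) are equivalent.

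Finally assume $w>0$. Then $(H-w)^{(w)}$ is the self-adjoint operator of the closure of $h-w$ in $\ell^2(X,w)$, whose form norm on $C_c(X)$ is $\ph\mapsto\big((h-w)(\ph)+\|\ph\|_w^2\big)^{1/2}$; from $h-w\ge0$ together with $(h-w)(e_n)/\|e_n\|_w^2\to0$ one gets $\lm_0\big((H-w)^{(w)}\big)=0$. For (v)$\Rightarrow$(vi): by (i) we have $\psi\in\ell^2(X,w)$, so exactly as in the proof of (i)$\Rightarrow$(v) the sequence $(e_n)$ is Cauchy in this form norm and converges in $\ell^2(X,w)$ to $\psi$; hence $\psi$ lies in the domain of the closure of $h-w$ in $\ell^2(X,w)$, on which the form takes the value $\lim_n(h-w)(e_n)=0=\lm_0\big((H-w)^{(w)}\big)\|\psi\|_w^2$. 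Thus $\psi$ minimizes the Rayleigh quotient of $(H-w)^{(w)}$ and is therefore, by the variational principle, an eigenfunction with eigenvalue $0$. Conversely (vi)$\Rightarrow$(i) is immediate, since any eigenfunction of $(H-w)^{(w)}$ belongs to its domain, hence to $\ell^2(X,w)$, so $\psi\in\ell^2(X,w)$.
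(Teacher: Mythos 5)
Your proposal is correct, and it is worth noting that it supplies genuine arguments where the paper is laconic. The paper's proof of the equivalence of (i)--(v) consists essentially of two observations --- that $\inf_{\ph}h(\ph)/\|\ph\|_w^2=1$ and that any null-sequence of $h-w$ normalized at $o$ converges pointwise to $\psi$ --- and then declares the equivalences to follow; your cycle makes this precise, and in particular your Fatou argument for (iv)$\Rightarrow$(iii) (splitting $h(\ph_n)=h_\psi(\ph_n/\psi)+\|\ph_n\|_w^2$, passing to the liminf in each nonnegative sum, and using connectedness to force $v/\psi$ constant) is exactly the piece of work the paper leaves implicit. The more substantive divergence is in (vi): for (iii)$\Rightarrow$(vi) the paper identifies $D((h-w)^{(w)})=D_{h-w,o}\cap\ell^2(X,w)$ via the Markov property of the ground state transform and general Dirichlet form theory (citing Fukushima--Oshima--Takeda), whereas you avoid that machinery by exhibiting the dominated null-sequence $(e_n)$ as an explicit sequence that is simultaneously Cauchy in the form norm of $(h-w)^{(w)}$ and convergent to $\psi$ in $\ell^2(X,w)$, and then invoking the elementary fact that a minimizer of the Rayleigh quotient at the bottom of the spectrum is an eigenfunction. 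Your route is more self-contained and elementary; the paper's route yields the stronger structural statement about the form domain as a by-product. One small point to keep explicit: in your ``first fact'' the lower bound on $\|e_n\|_w^2$ requires picking a vertex $x_0$ in the support of $w$, which exists since $w\gneqq 0$, and using $e_n(x_0)\to\psi(x_0)>0$; you say this, but it is the only place where $w\neq 0$ is actually used and deserves emphasis.
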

\begin{proof}Since $h-w$ is critical, it follows that $h\ge0$, and we have
\begin{align*}
    \inf_{\ph\in C_{c}(X)}\frac{h(\ph)}{\|\ph\|_{w}^{2}}=1.
\end{align*}

Furthermore, for any null-sequence $(\ph_{n})$ in $C_{c}(X)$ with
$\ph_{n}(o)=\psi(o)$  and $
\lim_{n\to\infty}(h-w)(\ph_{n})=0$, we have by
Theorem~\ref{t:char_criticality} (iv'), $\ph_{n}\to \psi$ pointwise. This
gives the equivalence ``(i) $\Longleftrightarrow$ (ii) $\Longleftrightarrow$
(iii) $\Longleftrightarrow$ (iv) $\Longleftrightarrow$ (v)'' with $v=\psi$
in (iv).

Now, assume that $w$ is strictly positive. Then (vi)
$\Longrightarrow$ (i) is obvious because $\psi $ being an
eigenfunction implies $\psi\in\ell^{2}(X,w)$.

We finally prove (iii) $\Longrightarrow$ (vi). In order to   show $\psi\in D((H-w)^{(w)})$
it suffices to demonstrate $\psi \in D((h-w)^{(w)})$. Then $\psi\in
D((H-w)^{(w)})$ follows directly from the definition of the operator
domain using that $C_{c}(X)$ is dense in the form domain,
$(\H-w)\psi=0$ and the Green formula (Lemma~\ref{l:Green}). Since the
ground state transform $(h-w)_{\psi}$ is Markovian we have by
general Dirichlet form theory
\begin{align*}
    D((h-w)_{\psi}^{(m)})=D_{(h-w)_{\psi}, o}\cap \ell^{2}(X,m)
\end{align*}
for any measure $m$ and $o\in X$, (see \cite[Theorem~1.5.2]{Fuk} or
for the special case of graphs \cite[Lemma~1.6]{KLSW17}). Hence, by the ground state
transform we get
\begin{align*}
    D((h-w)^{(w)})&=T_{\psi^{-1}}    D((h-w)_{\psi}^{(\psi^{2}w)})\\
&    =T_{\psi^{-1}} \left(D_{(h-w)_{\psi}, o} \cap \ell^{2}(X,\psi^{2}w)\right)\\
&=
    D_{h-w,o}\cap \ell^{2}(X,w).
\end{align*}
This shows the claim.
\end{proof}
Positive criticality with respect to the weight $w=1$ is characterized also by the large time behavior of the heat kernel. For the counterpart result in the case of second-order elliptic partial differential operators, see \cite{P13} and references therein.
\begin{thm}[{\cite[Theorem~3.1 and Corollary~5.6]{KLVW}}]\label{thm_large_time}
Let $H$ be a Schr\"odinger operator on $X$, and suppose that $H\geq 0$ in $X$. Let $\gl_0:=\lm_{0}(H^{(1)})$ be the bottom of
the spectrum  of the selfadjoint operator $H^{(1)}$, and denote by $p_t(x,y):=\mathrm{e}^{-tH^{(1)}}1_{\{x\}}(y)$
the heat kernel of operator ${\H}$. Then for each  $x,y\in X$
$$\lim_{t\to\infty} \frac{\log p_t(x,y)}{t}=-\gl_0.$$
 Moreover,
 \begin{equation*}
\lim_{t\to\infty} \mathrm{e}^{\lambda_0 t}p_t(x,y)= \Psi(x)\Psi(y),
\end{equation*}
where $\Psi=0$ unless $H-\gl_0$ is positive critical in $X$, and in this case, $\Psi\in \ell^2(X)$ is the normalized ground state of $H-\gl_0$.
\end{thm}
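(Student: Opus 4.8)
The proof (see~\cite{KLVW}) runs as follows. Work on $\ell^2(X)$ (so $m=1$) with the self-adjoint operator $H^{(1)}$, whose spectrum has bottom $\gl_0$, and for $x\in X$ let $\mu_x$ be the spectral measure of $1_{\{x\}}$, a probability measure supported in $[\gl_0,\infty)$. Since $p_t(x,x)=\langle 1_{\{x\}},\mathrm{e}^{-tH^{(1)}}1_{\{x\}}\rangle=\int_{\gl_0}^\infty\mathrm{e}^{-t\gl}\,\mathrm{d}\mu_x(\gl)$, a one-line Laplace estimate (bound the integral above by $\mathrm{e}^{-t\inf\supp\mu_x}$ and below by $\mathrm{e}^{-t(\inf\supp\mu_x+\eps)}\,\mu_x([\inf\supp\mu_x,\inf\supp\mu_x+\eps))$ for $\eps>0$) gives $\frac1t\log p_t(x,x)\to-\inf\supp\mu_x$ as $t\to\infty$. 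Since $\{1_{\{x\}}\}_{x\in X}$ is total in $\ell^2(X)$ one has $\sigma(H^{(1)})=\overline{\bigcup_x\supp\mu_x}$, whence $\inf_x\inf\supp\mu_x=\gl_0$. To upgrade this to every vertex, use that the heat kernel of a connected graph is strictly positive (which follows by exhaustion by connected finite sets from the monotonicity of Dirichlet heat kernels and the elementary positivity of $\mathrm{e}^{-tH_{K}}$ for finite connected $K$): from $p_{s+t+r}(x,y)\ge p_s(x,z)\,p_t(z,z)\,p_r(z,y)$ with the constants $p_s(x,z),p_r(z,y)>0$ one gets $\liminf_{t\to\infty}\frac1t\log p_t(x,y)\ge-\inf\supp\mu_z$ for every $z$, hence $\ge-\gl_0$; combined with the Cauchy--Schwarz bound $p_t(x,y)\le\sqrt{p_t(x,x)p_t(y,y)}\le\mathrm{e}^{-t\gl_0}$ this yields $\lim_{t\to\infty}\frac1t\log p_t(x,y)=-\gl_0$ for all $x,y\in X$.

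For the refined asymptotics put $q_t(x,y):=\mathrm{e}^{\gl_0 t}p_t(x,y)=\langle 1_{\{y\}},\mathrm{e}^{-t(H^{(1)}-\gl_0)}1_{\{x\}}\rangle$ and let $E$ be the spectral resolution of $H^{(1)}-\gl_0\ge0$. Since $\mathrm{e}^{-t\gl}\to\mathbf{1}_{\{0\}}(\gl)$ boundedly on $[0,\infty)$, dominated convergence in the finite measure $\langle 1_{\{y\}},E(\cdot)1_{\{x\}}\rangle$ gives $q_t(x,y)\to\langle 1_{\{y\}},E(\{0\})1_{\{x\}}\rangle$, where $E(\{0\})$ is the orthogonal projection onto $\Ker(H^{(1)}-\gl_0)$, so it remains to determine this kernel. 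Note $h-\gl_0\ge0$ on $C_c(X)$ by the variational description of $\gl_0$, and $D((H-\gl_0)^{(1)})\subseteq\F(X)$ by Proposition~\ref{p:H}(a2); hence any $0\ne\Phi\in\Ker(H^{(1)}-\gl_0)$ is an $\ell^2$-function with $(\H-\gl_0)\Phi=0$. By the first Beurling--Deny criterion (Theorem~\ref{t:closable}) and $H^{(1)}-\gl_0\ge0$ one has $(h-\gl_0)^{(1)}(|\Phi|)\le(h-\gl_0)^{(1)}(\Phi)=0$, so $|\Phi|\in\Ker(H^{(1)}-\gl_0)$ as well, and by the local Harnack inequality (Lemma~\ref{l:localHarnack}) $|\Phi|$ is a strictly positive $(\H-\gl_0)$-harmonic function.

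Now distinguish the three cases for the nonnegative form $h-\gl_0$. If $h-\gl_0$ is subcritical, pick $w\gneqq0$ with $h-\gl_0-w\ge0$ on $C_c(X)$ and extend this inequality to $D((h-\gl_0)^{(1)})$ by Fatou's lemma as in the proof of Lemma~\ref{l:GFbound}; this forces $0=(h-\gl_0)^{(1)}(|\Phi|)\ge\langle w|\Phi|,|\Phi|\rangle>0$ when $\Phi\ne0$, a contradiction, so $\Ker(H^{(1)}-\gl_0)=\{0\}$. If $h-\gl_0$ is null-critical, then $|\Phi|$ must be a scalar multiple of the unique positive $(\H-\gl_0)$-harmonic function, the Agmon ground state $\psi$ (Theorem~\ref{t:char_criticality}(iii)); but $\psi\notin\ell^2(X)$ by definition of null-criticality (with weight $1$), again forcing $\Phi=0$. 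Finally, if $h-\gl_0$ is positive-critical (with weight $1$), then by Theorem~\ref{t:char_nullcriticality}(vi) applied with weight $w=1$ we have $\psi\in\Ker(H^{(1)}-\gl_0)$, and by the previous paragraph any kernel element is a scalar multiple of $\psi$, so $\Ker(H^{(1)}-\gl_0)=\R\psi$ is one-dimensional and $E(\{0\})f=\langle f,\Psi\rangle\Psi$ with $\Psi:=\psi/\|\psi\|$. In every case $\lim_{t\to\infty}\mathrm{e}^{\gl_0 t}p_t(x,y)=\langle 1_{\{y\}},E(\{0\})1_{\{x\}}\rangle=\Psi(x)\Psi(y)$, with $\Psi=0$ unless $H-\gl_0$ is positive-critical, which is the assertion.

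The delicate points are the strict positivity (positivity-improvement) of $\mathrm{e}^{-tH^{(1)}}$ on a connected graph, needed to propagate the exponential rate from one vertex to all, and, in the second part, correctly identifying the operator-theoretic event ``$\gl_0$ is an $\ell^2$-eigenvalue of $H^{(1)}$'' with the criticality-theoretic notion of positive-criticality -- the Beurling--Deny/Harnack/uniqueness chain above is exactly what bridges these.
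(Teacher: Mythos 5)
This theorem is imported by the paper from \cite{KLVW} (``Theorem~3.1 and Corollary~5.6'') and is not proved here, so there is no internal proof to compare your argument against; what you have written is a reconstruction from scratch. The structure is correct and follows the standard route: the Laplace-method computation of $\tfrac1t\log p_t(x,x)$ from the spectral measure $\mu_x$, the chain $p_{s+t+r}(x,y)\geq p_s(x,z)\,p_t(z,z)\,p_r(z,y)$ together with the Cauchy--Schwarz bound $p_t(x,y)\leq e^{-\gl_0 t}$ to transfer the rate to off-diagonal terms, and for the refined asymptotics the identity $e^{\gl_0 t}p_t(x,y)\to\langle 1_{\{y\}},E(\{0\})1_{\{x\}}\rangle$ followed by identification of $\Ker(H^{(1)}-\gl_0)$ via Beurling--Deny, Harnack, and criticality theory.

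Two points deserve to be made explicit. First, your appeal to ``monotonicity of Dirichlet heat kernels'' for the strict positivity of $p_t$ is not automatic when $q$ is signed; the correct justification, which the paper itself uses in the proof of Theorem~\ref{t:GF}(b), is to first perform a ground state transform by a strictly positive $\H$-superharmonic function $v$ (available since $\gl_0\geq 0$ via Allegretto--Piepenbrink), so that $H_v$ is a Laplacian with nonnegative potential, and then invoke resolvent/semigroup monotonicity under Dirichlet restrictions for that Dirichlet form. Second, in the positive-critical case you argue that $|\Phi|$ is a multiple of the ground state $\psi$ and then assert that ``any kernel element is a scalar multiple of $\psi$''; the step from $|\Phi|=c\psi$ to $\Phi=\pm c\psi$ needs the observation that $\Phi^\pm=(|\Phi|\pm\Phi)/2$ are again in $\Ker(H^{(1)}-\gl_0)$, hence each is either strictly positive (by Harnack) or identically zero, and $\Phi^+\Phi^-=0$ forces one of them to vanish. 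With these two clarifications the argument is a valid proof of the cited result, and is in line with the methods used in \cite{KLVW}.
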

Let $H\geq 0$ in $X$. The following theorem demonstrates that positive criticality with respect to a weight $w$ is characterized by the behavior of the positive minimal Green function of the operator $H-\gl w$ as $\gl\nearrow 0$. For the counterpart result in the case of second-order elliptic partial differential operators, see \cite[Theorem~1.1]{P92}.
\begin{thm}\label{thm_lGl}
Let $H$ be a Schr\"odinger operator on $X$, and suppose that $H\geq 0$ in $X$. Let $w$ be a positive weight on $X$. For $\gl<0$, let
$G_\gl(x,x_0)$ be the positive minimal Green function of the operator $H-\gl w$ on $X$.
Then for each  $x,x_0\in X$ we have
 \begin{equation*}
\lim_{\gl\nearrow \,0}  (-\gl) G_\gl(x,x_0)= \phi(x)\phi(x_0),
\end{equation*}
where $\phi=0$ unless $H$ is positive critical in $X$ with respect to the weight $w$, and in this case, $\phi$ is the normalized ground state of $H$ in $\ell^2(X,w)$.
\end{thm}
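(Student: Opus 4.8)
The plan is to pass to the Hilbert space $\ell^2(X,w)$, where the perturbation $-\gl w$ becomes a pure spectral shift, and then read off the limit from the spectral theorem. Since $H\ge0$ in $X$ and, for $\gl<0$, the function $-\gl w$ is positive, the form $h-\gl w$ is subcritical in $X$, so $G_\gl$ exists by Theorem~\ref{t:subcriticality}. Closing $h-\gl w$ in $\ell^2(X,w)$ yields the operator $H^{(w)}-\gl$, because on $\ell^2(X,w)$ the quadratic form $\ph\mapsto-\gl\sum_x w(x)\ph(x)^2$ is simply $(-\gl)\|\cdot\|_w^2$; as $h\ge0$ we have $\lm_0(H^{(w)})\ge0$, so $H^{(w)}-\gl$ is boundedly invertible. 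I would then establish the identification
\begin{align*}
G_\gl(x,\cdot)=(H^{(w)}-\gl)^{-1}\delta_x,\qquad \delta_x:=1_{\{x\}}/w(x).
\end{align*}
The bound $(h-\gl w)(\ph)\ge(-\gl)\|\ph\|_w^2$ shows that on $C_c(X)$ the norm $\|\cdot\|_{h-\gl w,o}$ is equivalent to the form norm of $H^{(w)}-\gl$; hence $D_{h-\gl w,o}$ coincides, as a function space, with the form domain of $H^{(w)}-\gl$, and $\ov{h-\gl w}$ with its form. By Theorem~\ref{t:GF}(b), $G_\gl(x,\cdot)$ is the element of this Hilbert space representing the functional $f\mapsto f(x)=\langle\delta_x,f\rangle_w$, so the Riesz representation theorem gives the displayed formula.

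With this in hand, the spectral theorem for $H^{(w)}$ (spectral resolution $E$ supported in $[0,\infty)$) gives
\begin{align*}
(-\gl)\,G_\gl(x,y)=\int_{[0,\infty)}\frac{-\gl}{s-\gl}\,\mathrm{d}\langle E(s)\delta_x,\delta_y\rangle_w .
\end{align*}
As $\gl\nearrow0$ the integrand converges pointwise to $1_{\{0\}}(s)$ and is bounded by $1$; since the measure $\langle E(\cdot)\delta_x,\delta_y\rangle_w$ has finite total variation, dominated convergence yields
\begin{align*}
\lim_{\gl\nearrow0}(-\gl)\,G_\gl(x,y)=\langle P_0\delta_x,\delta_y\rangle_w ,
\end{align*}
where $P_0$ is the orthogonal projection onto $\Ker H^{(w)}$. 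Everything thus reduces to identifying $\Ker H^{(w)}$.

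If $h$ is subcritical, I would use $|\ph(x)|^2=|\ov{h}(\ph,G(x,\cdot))|^2\le\ov{h}(\ph)\,G(x,x)$ (Cauchy-Schwarz in the Hilbert space $(D_{h,o},\ov{h})$, see Theorem~\ref{t:GF}) together with a choice of $c_x>0$ with $\sum_x c_x\le1$ to produce a strictly positive $\rho$ with $h-\rho\ge0$ on $C_c(X)$; by Fatou this extends to $h^{(w)}(f)\ge\langle\rho f,f\rangle_1$, so $f\in\Ker H^{(w)}$ forces $f=0$. If $h$ is critical with ground state $\psi$, then by Theorem~\ref{t:formcrit} one has $\psi\in D_{h,o}$ with $\ov{h}(\psi)=0$ and $\ov{h}$ given on $D_{h,o}$ by the explicit Dirichlet-type formula with weights $b(x,y)\psi(x)\psi(y)$; the Markovian ground state transform argument from the proof of Theorem~\ref{t:char_nullcriticality} gives $D(h^{(w)})=D_{h,o}\cap\ell^2(X,w)$ with $h^{(w)}$ agreeing with $\ov{h}$ there, so any $f\in\Ker H^{(w)}$ has $\ov{h}(f)=0$ and hence $f/\psi$ is constant by connectedness, i.e.\ $f\in\R\psi$; conversely, if $\psi\in\ell^2(X,w)$ (that is, $h$ is positive-critical with respect to $w$) then $\psi\in D_{h,o}\cap\ell^2(X,w)=D(h^{(w)})$, and since $\H\psi=0$ with $\psi\in\F(X)$, the Green formula (Lemma~\ref{l:Green}) and density of $C_c(X)$ in $D(h^{(w)})$ give $\psi\in D(H^{(w)})$ with $H^{(w)}\psi=0$. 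Therefore $\Ker H^{(w)}=\R\psi$ exactly when $h$ is positive-critical with respect to $w$, and $\Ker H^{(w)}=\{0\}$ otherwise (in the null-critical case because $f=c\psi\in\ell^2(X,w)$ with $c\ne0$ would force $\psi\in\ell^2(X,w)$).

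Finally I would assemble the pieces: if $h$ is not positive-critical with respect to $w$, then $P_0=0$ and the limit equals $0=\phi(x)\phi(y)$ with $\phi=0$; if $h$ is positive-critical with respect to $w$, then $P_0 f=\|\psi\|_w^{-2}\langle f,\psi\rangle_w\,\psi$, and $\langle\delta_x,\psi\rangle_w=\psi(x)$ gives $\lim_{\gl\nearrow0}(-\gl)G_\gl(x,y)=\psi(x)\psi(y)/\|\psi\|_w^2=\phi(x)\phi(y)$ with $\phi:=\psi/\|\psi\|_w$ the normalized ground state of $H$ in $\ell^2(X,w)$. The main obstacle I anticipate is the pair of bridging identifications — $G_\gl(x,\cdot)=(H^{(w)}-\gl)^{-1}\delta_x$ and the critical-case domain identity $D(h^{(w)})=D_{h,o}\cap\ell^2(X,w)$ — which are precisely the points where the measure-free Green-function theory of Section~\ref{s:GF} must be matched with the $\ell^2(X,w)$ spectral calculus.
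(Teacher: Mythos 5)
Your argument is correct for strictly positive $w$, and it takes a genuinely different route from the paper. The paper never leaves the positive-solutions framework: it disposes of the subcritical case trivially, and in the critical case combines the resolvent equation for $G_s$ with the Harnack principle, Fatou's lemma and the uniqueness of positive supersolutions to first prove $\phi(x)=-t\sum_z w(z)G_t(x,z)\phi(z)$ and then squeeze the constant $\beta=1$ out of a second Fatou argument. You instead transplant the problem into $\ell^2(X,w)$, where $-\gl w$ is a scalar spectral shift, and read the limit off the spectral theorem via dominated convergence, reducing everything to the identification of $\Ker H^{(w)}$. Your route is more conceptual and gives convergence of the full net $\gl\nearrow 0$ with the normalization $\|\psi\|_w^{-2}$ appearing automatically as $\langle E(\{0\})\delta_x,\delta_y\rangle_w$; its price is the two bridging facts you correctly isolate, namely $G_\gl(x,\cdot)=(H^{(w)}-\gl)^{-1}\delta_x$ (which does follow from the norm equivalence plus Theorem~\ref{t:GF}(b)(iv), provided you pick the base point $o$ in $\supp w$) and the critical-case domain identity $D(h^{(w)})=D_{h,o}\cap\ell^{2}(X,w)$, which rests on the same Dirichlet-form input ([Fuk, Theorem~1.5.2]) that the paper already invokes in Theorem~\ref{t:char_nullcriticality}. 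The paper's argument, by contrast, uses no self-adjointness beyond the symmetry of $G_\gl$ and is the one that survives in the nonsymmetric continuum setting it is modeled on.

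The one genuine gap concerns the generality of $w$. In this paper ``positive'' means $w\gneqq 0$, and both the definition of positive-criticality and the paper's proof allow $w$ to vanish on part of $X$; the conclusion is asserted for \emph{all} $x,x_0\in X$. Your construction needs $w(x)>0$ at every vertex: $\delta_x=1_{\{x\}}/w(x)$ is undefined off $\supp w$ (and equals $0$ in $\ell^2(X,w)$ there), $\ell^2(X,w)$ is only a seminormed quotient, and the paper's own operator-theoretic statement in this weighted space (Theorem~\ref{t:char_nullcriticality}(vi)) is explicitly restricted to strictly positive $w$. So as written your proof establishes the theorem only for strictly positive $w$. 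It can be repaired: once you have $\lim_{\gl\nearrow 0}(-\gl)G_\gl(x,x_0)=\phi(x)\phi(x_0)/\|\phi\|_w^2$ for $x,x_0\in\supp w$, the functions $(-\gl)G_\gl(\cdot,x_0)$ are positive supersolutions of $(H-\gl w)u\ge 0$ whose value at one point of $\supp w$ converges to a positive number, so the Harnack principle and the uniqueness of positive supersolutions of $Hu=0$ force every subsequential limit to be the correct multiple of $\phi$, and symmetry of $G_\gl$ then handles both arguments. But that patch is precisely the compactness-plus-uniqueness mechanism of the paper's proof, so you should either add it or state the strict positivity of $w$ as a hypothesis.
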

\begin{proof}
If $H$ subcritical, then clearly
\begin{equation*}
\lim_{\gl\nearrow \,0}  (-\gl) G_\gl(x,x_0)= 0.
\end{equation*}
So, we may assume that $H$ is critical in $X$.

Fix $x_0, x_1 \in X$.
For $s<0$, let $G_s(\cdot,x_0)$ be the minimal positive Green function of $H-sW$ in $X$
satisfying $(H-sw)G(\cdot, x_0) =\delta_{x_0}$.

Set
\[
u_s(x):= \frac{G_s(x,x_0)}{G_s(x_1, x_0)}.
\]

 \textbf{Claim}: For any $x\in X$
\begin{equation}\label{s_t0_0}
\lim_{s \nearrow 0} u_s(x)= \frac{\phi(x)}{\phi(x_1)},
\end{equation}
where $\phi$ is the unique positive ground state for $H$.


Indeed, for any fixed $t<s<0$, the function $u_s$ satisfies
$$(H-tw)u_s \geq 0 \quad \mbox{in } X,$$
as well as $u_s(x_1)=1$.
Therefore, by the Harnack principle, a standard exhaustion and diagonalization arguments, and up to a subsequence, the sequence $(u_s)$ converges to a positive supersolution $\vgf$ the equation $(H-tw)u=0$ in $X$.


Letting $t\nearrow 0$, we obtain by Fatou's Lemma that pointwise, $H\vgf\geq 0$ in $X$. So, $\vgf$ is a positive super solution of the equation $Hu=0$ in $X$ satisfying $\vgf(x_1)=1$.
The criticality of $H$ in $X$ implies now that $\vgf(x)=\phi(x)/\phi(x_1)$. In particular, the limit does not depend on the subsequence, hence,
$\lim_{s \nearrow 0} u_s(x)= \phi(x)/\phi(x_1)$, and the claim is proved.

Now, by the resolvent equation and the symmetry of the Green's function, we have for any $t<s<0$
$$G_s(x,x_0)=G_t(x,x_0)+(s-t)\, \sum_{z \in X} w(z)G_t(x,z) G_s(z, x_0) .$$
Consequently,
\begin{equation}\label{eq-res_eq}
\frac{G_s(x,x_0)}{G_s(x_1,x_0)} =\frac{G_t(x,x_0)}{G_s(x_1,x_0)}+(s-t)\,\sum_{z \in X}w(z) G_t(x,z)\frac{G_s(z,x_0)}{G_s(x_1,x_0)}.
\end{equation}
Hence, 
\begin{equation}\label{eq-res_eq7}
\frac{G_s(x,x_0)}{G_s(x_1,x_0)} \geq (s-t)\,\sum_{z \in X}w(z) G_t(x,z)\frac{G_s(z,x_0)}{G_s(x_1,x_0)}.
\end{equation}
Let $s \nearrow 0$. By the claim proven above and using Fatou's lemma, this yields
\begin{equation}\label{eq-ineq}
\frac{\phi(x)}{\phi(x_1)}\geq
-t\, \sum_{z \in X} w(z)  G_t(x,z)\frac{\phi(z)}{\phi(x_1)}.
\end{equation}
Fix $t<0$ and let
$$v_t(x):=-t\, \sum_{z \in X} w(z)  G_t(x,z)\phi(z).$$
Then
\begin{equation}\label{hvt}
    Hv_t=-tw(\phi-v_t).
\end{equation}
 By \eqref{eq-ineq} we have that
$Hv_t\geq 0$ in $X$. Moreover, \eqref{eq-ineq} implies that $v_t$ is a positive supersolution  of the equation $Hu=0$ in $X$. By the maximum principle, either $v_t$ is strictly positive or identically zero. So, $v_t>0$. But, due the criticality of $H$ in $X$, it follows that $v_t$ is a solution, and $\phi -v_t$ are either strictly positive solution of the equation $Hu=0$ in $X$, or the zero solution. Consequently, $v_t=\alpha \phi$, where $0 <\ga\leq 1$. In fact, $\ga=1$ since otherwise, \eqref{hvt} implies the $v_t$ is strictly positive supersolution. So,
\begin{equation}\label{eq-eq}
\phi(x)=
-t\, \sum_{z \in X} w(z)  G_t(x,z)\phi(z) \qquad \forall x\in X.
\end{equation}

Taking the liminf as $t \nearrow 0$ on the right hand side of \eqref{eq-eq}, an easy computation
involving again Fatou's lemma,  shows that one obtains a positive supersolution of the equation $Hu=0$ in $X$.
Consequently, by uniqueness of those supersolutions, we arrive at
\begin{equation}\label{1}
\phi(x)= \lim_{t \nearrow 0} \left( - t\, \sum_{z \in X} G_t(x,z)w(z)\phi(z) \right).
\end{equation}
Hence,
$$\phi(x)=\lim_{t \nearrow 0}\left(-tG_t(x,x_0) \sum_{z \in X} \frac{G_t(x,z)}{G_t(x,x_0)}w(z)\phi(z) \right),$$
which in turn gives
$$\limsup_{t \nearrow 0}\left(-tG_t(x,x_0)\right)=\limsup_{t \nearrow 0}\left(\frac{\phi(x)}{\sum_z \frac{G_t(x,z)}{G_t(x,x_0)}w(z)\phi(z)}\right). $$
Therefore, Fatou's lemma yields
\begin{equation}\label{tgtleqfi}
\limsup_{t \nearrow 0}\left(-tG_t(x,x_0)\right)\leq \frac{\phi(x)\phi(x_0)}{\sum_z \phi^2(z)w(z)}.
\end{equation}
In particular, if $H$ is null-critical with respect to $w$, then $\limsup_{t \nearrow 0}\left(-tG_t(x,x_0)\right)=0$.

\medskip

It remains to deal with the positive-critical case.

Note that by \eqref{tgtleqfi}, the sequence $(-tG_t(x,x_0))$ is locally bounded, and therefore, as above,  and up to a subsequence, it converges to a positive supersolution of the equation $Hu=0$ in $X$. Recall that the ground state $\gf$ is the unique supersolution of the equation $Hu=0$ in $X$. Consequently, for any subsequence $t_j \nearrow 0$ there exists $\gb\geq 0$ such that
$$\lim_{t_j \nearrow 0}\left(-t_jG_{t_j}(x,x_0)\right) =\gb \frac{\phi(x)\phi(x_0)}{\sum_z \phi^2(z)w(z)}.$$
We observe from inequality~\eqref{tgtleqfi} that $\gb \leq 1$. We still need to show that $\gb \geq 1$.
For this purpose, note first that the inequality~\eqref{tgtleqfi} holds true for all $x_0 = z \in X$.
Combining this with inequality~\eqref{1} and with Fatou's lemma, one shows that
$$\phi(x_1) \leq \sum_{z \in X} \lim_{j\to \infty}(-t_j)G_{t_j}(x_1,z)w(z)\phi(z) \leq  \gb \frac{\phi(x_1)\sum_z \phi^2(z)w(z)}{\sum_z \phi^2(z)w(z)}.$$
So, $\gb \geq 1$. Since the subsequence $(t_j)$ was chosen arbitrarily, the proof of the theorem is finished.

\end{proof}
\begin{rem}
Using \eqref{eq-res_eq7} and the Martin boundary approach it follows that for any positive potential $w$, inequality \eqref{eq-ineq} holds even if $H$ is subcritical in $X$ and $\phi$ is {\em any} positive solution of the equation $Hu=0$ in $X$. Moreover, as it is shown above, in the critical case equality holds true for any $x\in X$ (see \eqref{eq-eq}).
\end{rem}
\begin{rem}
For the weight $w=1$, Theorem~\ref{thm_lGl} follows directly from Theorem~\ref{thm_large_time} using a classical Abelian theorem \cite[Theorem~10.2]{Sim}.
\end{rem}
\bigskip
\footnotesize
\noindent\textit{Acknowledgments.}
 M.~K. is grateful to the Department of Mathematics at the Technion for the hospitality during his visits and acknowledges the financial support of the German Science Foundation. Y.~P. and F.~P. acknowledge the support of the Israel Science Foundation (grants No. 970/15) founded by the Israel Academy of Sciences and Humanities. F.~P. is grateful for support through
a Technion Fine Fellowship.


\end{document}